\DeclareMathOperator{\sech}{sech}
\theoremstyle{plain}
\newtheorem{thm}{Theorem}[section]
\newtheorem*{thmA}{Theorem A}
\newtheorem*{thmB}{Theorem B}
\newtheorem*{thmC}{Theorem C}
\newtheorem*{thmD}{Theorem D}
\newtheorem*{thmE}{Theorem E}
\newtheorem*{thmF}{Theorem F}
\newtheorem{cor}[thm]{Corollary}
\newtheorem{prop}[thm]{Proposition}
\newtheorem{lemma}[thm]{Lemma}
\newtheorem{defn}{Definition}
\newtheorem{remark}{Remark}[section]
\newcommand{\calb}{{\mathcal B}}
\newcommand{\calc}{{\mathcal C}}
\newcommand{\calf}{{\mathcal F}}
\newcommand{\calo}{{\mathcal O}}
\newcommand{\calp}{{\mathcal P}}
\newcommand{\calr}{{\mathcal R}}
\newcommand{\cals}{{\mathcal S}}
\newcommand{\calz}{{\mathcal Z}}
\newcommand{\CC}{{\mathbb C}}
\newcommand{\DD}{{\mathbb D}}
\newcommand{\NN}{{\mathbb N}}
\newcommand{\QQ}{{\mathbb Q}}
\newcommand{\RR}{{\mathbb R}}
\renewcommand{\hat}{\widehat}
\newcommand{\la}{\lambda}
\begin{document}
\title{ Dynamics of the meromorphic families $f_{\la}=\la \tan^p z^q$}
 \footnote{2000 Mathematics Subject Classification. 37F30, 37F20, 37F10,30F30, 30D30, 30A68.}

\author{Tao Chen  and Linda Keen}

\address{}
\email{}

\thanks{}

\subjclass[2000]{58F23, 30D05}

 \begin{abstract}  This paper continues our investigation of the dynamics of families of transcendental meromorphic functions with finitely many singular values all of which are finite.   Here we  look at a generalization of the family of polynomials $P_a(z)=z^{d-1}(z- \frac{da}{(d-1)})$, the family $f_{\la}=\la \tan^p z^q$.  These functions have a super-attractive fixed point, and, depending on $p$, one or two asymptotic values.   Although many of the dynamical properties generalize, the existence of an essential singularity and of poles of multiplicity greater than one implies that significantly different techniques are required here.   Adding transcendental methods to standard ones, we give a description of the dynamical properties; in particular we prove the Julia set of a hyperbolic map is either connected and locally connected or a Cantor set.   We also give a description of the parameter plane of the family $f_{\la}$.  Again there are similarities to and differences from  the parameter plane of the family $P_a$ and again  there are new techniques.   In particular, we prove there is dense set of points on the boundaries of the hyperbolic components that are accessible along curves and we characterize these  points.
 
 \end{abstract}

\maketitle
\section{Introduction}
\label{intro}

This paper is part of an ongoing  program to understand the dynamic and parameter spaces of dynamical systems generated by transcendental meromorphic functions with a finite number of singular values.  In holomorphic dynamics in general, the singular values control the periodic and eventually periodic stable behavior.  Functions with finitely many singular values have no non-periodic stable behavior, so the singular values provide a natural way to parameterize such a family in order to visualize how the dynamics vary across it.   In such full generality, though, that is not really practical.   What does work is to constrain the dynamic behavior of most of the singular values, for example by insisting that their orbits tend to an attracting fixed point with constant multiplier, and to study the dynamic behavior as the others vary freely.   One virtue of this approach is that it allows one to use computer graphics to gain some intuition.       

This philosophy has driven the work on rational dynamics, most famously the study of the quadratic family $P_c(z)=z^2+c$.   Computing the orbits of the critical value $c$ leads one to the well-studied picture of the Mandelbrot set,  which shows that, generically, the critical value either escapes to infinity or tends to an attractive periodic cycle.   Many of the techniques initiated by Douady, Hubbard, Sullivan and a lot of others were developed to understand it.  (See, e.g. \cite{M2}).   Our work on transcendental functions extends the theory for rational functions, so let us begin with a brief review of that story, first for quadratics and then for polynomials.  See section~\ref{basics} for definitions.

In the quadratic case, what we are calling the generic values for $c$ are those for which the dynamics are hyperbolic.  This set is partitioned into infinitely many open, bounded, simply connected sets (Mandelbrot-like hyperbolic components)  and one unbounded set homeomorphic to the outside of a disk.  These components are separated by a ``bifurcation locus'', where the dynamics undergoes changes.    The dynamics for $c$'s in the  bounded sets is quite different from those in  the unbounded set.  In the former, the unstable (or Julia) set is always connected and locally connected, while in the latter it is always a Cantor set, and the action of $P_c$  on this Cantor set is conjugate to the shift map on two symbols.  Thus, this unbounded component is called a {\em shift locus}.   

This situation can be generalized to polynomials of degree $d>1$.   A ``dynamically natural'' one dimensional slice can be obtained by constraining the origin be a critical point of multiplicity $d-1$.  Thus, all but one of the singular values is fixed, and the origin is always an attracting fixed point.  There is one more ``free'' critical value $a$, and one can ask how the dynamics depend on it.  Such polynomials can be written in the form
$$P_a(z)=z^{d-1}(z- \frac{da}{(d-1)})$$
They were studied in \cite{Roe, EMS}) where it is shown that, in addition to Mandelbrot hyperbolic components and a shift locus, there is a new type of bounded hyperbolic component called a {\em capture component}.  In it, the critical point $0$ attracts the free critical point $a$.  We use ``shell component" to refer to the Mandlelbrot-like components, to distinguish them from the capture and shift-locus components.

It is natural to ask what different, if anything, happens for transcendental functions.  There are three essential differences between polynomial and transcendental maps. 

\begin{itemize}
 \item  For a polynomial, infinity is always a fixed critical value with no preimages, but for a transcendental map, it is an essential singularity that has preimages, and those preimages are essential singularities of iterates of the map.
 
\item Transcendental maps are infinite to one.
 
 \item  In addition to critical values, transcendental maps can have  another type of singular value where the local inverses are not all well defined.  These are called ``asymptotic values''.    The standard example is $0$ for $e^z$. 
 \end{itemize}
 
If one restricts to families with finitely many singular values, and in addition require that all the singular values are finite, there are many parallels with polynomials.  For example, in the tangent family $\la \tan z$, the functions have two symmetric asymptotic values, $\pm \la i$, and these are the only singular values.  Because of the symmetry, there are symmetries in the dynamics and in the parameter plane.   In \cite{KK1}, we studied this family and showed that the parameter space has a structure analogous to that for the quadratic maps.  There are shell components that are hyperbolic components   much like the hyperbolic components of the Mandelbrot set, where the asymptotic values tend to attracting periodic orbits.  There is also a shift locus component where both asymptotic values are attracted to the fixed point $0$. On this component, the map is conjugate on its Julia set  to a shift map not on two symbols, but on infinitely many symbols.   
 
A natural generalization of the tangent family is where there are two asymptotic values, no critical values, and, rather than require the asymptotic values to be symmetric, one requires that $0$ be an attractive fixed point with its multiplier fixed across the family. In this situation, at least one asymptotic value is always attracted to zero.  We proved in \cite{CJK1, CJK2} that the structure of the parameter plane was analogous to that for \emph{rational} maps of degree two that have a fixed point with constant multiplier: there are two collections of shell components and the complement of their closure is an annular  shift locus.   

The next step in our program is to look at families with more than two singular values and study one dimensional slices of their parameter spaces to see what new structures we find.   In this paper, we consider functions that are transcendental analogues of the polynomials $P_a$: the family $f_{\la}(z)=\la \tan^p z^q$ for $pq>1$. These functions have a critical value of multiplicity $pq$ at the origin and either one or a pair of symmetric asymptotic values.   The origin is a super-attractive fixed point and the basin of points attracted to the origin, $B$, is never empty.   The component of $B$ containing the origin is the immediate basin $B_0$.   

Our first set of results is about the dynamic plane.   We show that, like quadratic polynomials and tangent maps, there is a dichotomy of the connectivity of the Julia set for hyperbolic maps.  The techniques are similar to those in  \cite{CJK1, CJK2}  but need to be modified because there are poles of multiplicity greater than one in the Julia set.  To deal with this we use an estimate of Rippon and Stallard, \cite{RS}.  

\begin{thmA} The immediate basin of the origin $B_0$ is completely invariant if, and only if, it contains the asymptotic value(s).   If it does, it is infinitely connected, the Julia set is a Cantor set, and $f_{\la}$ on the Julia set is topologically conjugate to the shift map on infinitely many symbols. 
\end{thmA}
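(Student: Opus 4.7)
The plan is to leverage the Böttcher coordinate at the super-attracting fixed point $0$ together with the structure of logarithmic asymptotic tracts of $f_{\la}$ near $\infty$. Let $\phi$ be the Böttcher map conjugating $f_{\la}$ to $w\mapsto w^{pq}$ near $0$, extended maximally on a simply connected subdomain $W\subseteq B_0$ to a biholomorphism $\phi:W\to\DD$. The only possible obstructions to extending $\phi$ further within $B_0$ are critical points and asymptotic values of $f_{\la}$; since all critical values equal $0$, the effective obstruction is an asymptotic value lying in $B_0$. If no asymptotic value is in $B_0$, then $W=B_0$ is simply connected and $f_{\la}|_{B_0}$ is a proper map of finite degree, so the globally infinite degree of $f_{\la}$ forces the existence of other Fatou components mapping onto $B_0$, and $B_0$ is not completely invariant. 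Conversely, if an asymptotic value $v$ lies in $B_0$, its logarithmic tract $T$, being the unique connected component of the preimage of a small punctured neighborhood of $v$ that accumulates at $\infty$, is necessarily contained in $B_0$; this makes $f_{\la}|_{B_0}:B_0\to B_0$ a branched covering of infinite degree, and any other component of $f_{\la}^{-1}(B_0)$ would have to carry its own logarithmic tract over some asymptotic value in $B_0$, which is impossible since all such tracts already sit in $B_0$. This yields $f_{\la}^{-1}(B_0)=B_0$ and proves the equivalence.

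Assuming now that $B_0$ contains the asymptotic value(s), I would establish infinite connectivity by applying the Riemann--Hurwitz relation to the infinite-degree branched cover $f_{\la}|_{B_0}:B_0\to B_0$: the only way the Euler-characteristic bookkeeping closes up is for $B_0$ to have infinitely many complementary components in $\CC$. Equivalently, the countably many pullbacks under $f_{\la}^{-n}$ of an arc in $\DD$ from $\phi(v)$ to $\partial\DD$ yield infinitely many disjoint unbounded arcs in $B_0$ accumulating at $\infty$, and between consecutive pullbacks sit components of $\CC\setminus B_0$. Because any other Fatou component would have to map under some iterate into $B_0$, contradicting $f_{\la}^{-1}(B_0)=B_0$, the Fatou set equals $B_0$ and $J(f_{\la})=\CC\setminus B_0$ is the countable union of closures of these complementary components.

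To show $J(f_{\la})$ is a Cantor set conjugate to the shift on infinitely many symbols, I would construct symbolic itineraries by labeling with positive integers the countably many preimage strips of a fundamental domain in $B_0$ obtained by cutting $B_0$ along the tracts; each $z\in J(f_{\la})$ then acquires an itinerary $(s_0(z),s_1(z),\dots)\in\NN^{\NN}$ recording which strip contains $f_{\la}^n(z)$. Using hyperbolicity one proves that nested pullbacks of a compact neighborhood of $J(f_{\la})$ have diameters tending to zero, from which total disconnectedness and perfectness of $J(f_{\la})$ follow, and the itinerary map becomes a homeomorphism conjugating $f_{\la}|_{J(f_{\la})}$ to the one-sided shift. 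The main obstacle I anticipate is controlling the inverse branches of $f_{\la}$ near the poles of multiplicity greater than $1$, where the naive bound $|f_{\la}'|\to\infty$ is insufficient to yield the Koebe-type distortion estimates that drive the contraction of pullbacks. This is precisely where the Rippon--Stallard estimate from \cite{RS} enters: it supplies the quantitative control on inverse-branch diameters near higher-order poles needed to complete the Cantor set argument and to identify $f_{\la}|_{J(f_{\la})}$ with the shift on infinitely many symbols.
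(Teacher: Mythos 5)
Your high-level strategy --- B\"ottcher coordinate at $0$, logarithmic tracts over the asymptotic value, symbolic dynamics controlled by the Rippon--Stallard estimate --- is the same one the paper uses, and the latter half of your sketch (Cantor structure, shift conjugacy, the role of \cite{RS} near the high-order poles) is sound. The difficulty is in the first half, where you establish the complete-invariance dichotomy.

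The gap is the step ``any other component of $f_{\la}^{-1}(B_0)$ would have to carry its own logarithmic tract over some asymptotic value in $B_0$, which is impossible since all such tracts already sit in $B_0$.'' You assert, but do not prove, that a hypothetical extra component of $f_{\la}^{-1}(B_0)$ must contain an asymptotic tract. This is not automatic: when $p>2$ the asymptotic values are no longer omitted values, so a component of $f_{\la}^{-1}(B_0)$ could a priori cover the asymptotic value through honest preimages of $v$ rather than through a tract, and even when $p\le 2$ an unbounded component need not a priori reach $\infty$ inside a tract rather than alongside a Julia direction. A similar issue precedes it: the statement that the tract $T$ ``is necessarily contained in $B_0$'' is only obvious that $T\subset B$ (since $f(T)$ lies in a small neighborhood of $v$ inside $B_0$); you still have to connect $T$ to the component of $B$ containing $0$, say by pulling back a path in $B_0$ from $0$ to $v$ and noting the pullback is an unbounded arc entering the tract.

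The paper sidesteps both issues by proving first a structural classification of $f_{\la}^{-1}(U)$ for a simply connected $U$ according to whether $U$ contains $0$ and/or $v,v'$ (Lemma~\ref{preimageSC}; the crucial case is that when $0,v,v'\in U$ the full preimage is a \emph{single} infinitely connected unbounded set), and then Lemma~\ref{allin}: $B_0$ contains an asymptotic value if and only if $B_0$ contains \emph{all} preimages of $0$. Complete invariance then follows by pure Fatou-component bookkeeping: each component of $f_{\la}^{-1}(B_0)$ must contain a preimage of $0$, and if all those preimages already lie in $B_0$ the component coincides with $B_0$. That route also delivers the infinite connectivity directly (as a consequence of Lemma~\ref{preimageSC}(4)), avoiding the somewhat informal Riemann--Hurwitz ``bookkeeping'' you invoke. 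To repair your argument you would essentially need to reprove these two lemmas; as written, the converse implication has a genuine hole.
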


\begin{thmB}  If $B_0$ does not contain an asymptotic value, all the Fatou components are simply connected.   In particular, there are no Herman rings.
\end{thmB}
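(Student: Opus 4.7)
The plan is to show that every Fatou component of $f_\lambda$ is simply connected; the absence of Herman rings, which are doubly connected by definition, then follows immediately. I proceed in two stages: first handle $B_0$ itself by a direct Riemann--Hurwitz computation, then propagate simple connectivity along the pre-periodic structure.

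\emph{Stage 1.} Since $B_0$ contains no asymptotic value, no component of $f_\lambda^{-1}(B_0)$ can be an asymptotic tract, so $B_0$ is bounded and $f_\lambda \colon B_0 \to B_0$ is a proper holomorphic map. The critical points of $f_\lambda$ are $0$ (of local degree $pq$) and $z_k = (k\pi)^{1/q}$ for $k \in \ZZ \setminus \{0\}$ (each of local degree $p$), and every critical point maps to the critical value $0$. Let $M$ be the number of $z_k$'s in $B_0$; counting preimages of $0$ inside $B_0$ gives $\deg(f_\lambda|_{B_0}) = pq + pM$, and the Riemann--Hurwitz formula yields
\[
c(B_0) \;=\; 1 + \frac{M}{pq + pM - 1}.
\]
Since $c(B_0)$ must be a positive integer and the fractional term lies in $[0,1)$, this forces $M=0$ and $c(B_0)=1$; in particular, $B_0$ contains no critical point other than $0$.

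\emph{Stage 2.} Because $f_\lambda$ has only finitely many singular values, by the theorems of Eremenko--Lyubich and Goldberg--Keen it has no wandering domains and no Baker domains, so every Fatou component is pre-periodic. Suppose inductively that $U$ is a Fatou component with $f_\lambda(U)=V$ and that $V$ is already known to be simply connected. If $V$ contains neither a critical value nor an asymptotic value, then $f_\lambda|_U$ is a proper holomorphic map with no critical points, and Riemann--Hurwitz forces its degree to be $1$, so $U\cong V$. If $V=B_0$, then $f_\lambda|_U$ is proper and the Riemann--Hurwitz computation of Stage 1 applied to it gives $\chi(U)=M_U$, the number of $z_k$'s in $U$; together with $\chi(U)\le 1$ and $M_U\ge 1$ (needed for $\deg(f_\lambda|_U)\ge 1$), this forces $M_U=1$ and $U$ simply connected. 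If $V$ contains an asymptotic value $v$, then the bounded components of $f_\lambda^{-1}(V)$ are handled by the previous cases, while any unbounded component contains an asymptotic tract of $f_\lambda$; the infinite-degree covering $f_\lambda|_U\colon U\to V\setminus\{v\}$ of the once-punctured simply connected domain $V\setminus\{v\}$ must then be the universal cover, so $U$ is simply connected. Iterating this lifting along the pre-periodic chain from any Fatou component back to $B_0$ (or, if the asymptotic values attract to another cycle, to its immediate basin, handled by the same Riemann--Hurwitz argument applied to $f_\lambda^k$) shows every Fatou component is simply connected, and Herman rings are thereby excluded.

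The principal obstacle is the unbounded-tract case in Stage 2: when $V$ contains an asymptotic value, $f_\lambda|_U$ is no longer proper and Riemann--Hurwitz does not apply, so one must instead use the explicit geometry of $\lambda\tan^p z^q$ near infinity to identify $U$ with the universal cover of $V\setminus\{v\}$. The key input is that each tract of $\tan(z^q)$ is conformally a half-plane on which $f_\lambda$ is asymptotic to a constant, so the deck transformation group of the covering is infinite cyclic. A secondary complication is an attracting cycle other than $\{0\}$ that contains an asymptotic value in its immediate basin: at least one component of that cycle then contains a tract, and the Riemann--Hurwitz argument for the first-return map must be combined with the same covering-space argument on that component.
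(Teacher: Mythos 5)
Your Stage~1 and the parts of Stage~2 dealing with the basin of zero are a legitimate alternative to the paper's route: the paper shows $B_0$ is simply connected by observing that the nested preimages $U_n$ of a small disk remain simply connected until they meet an asymptotic value (so the B\"ottcher map extends to all of $B_0$), and then propagates simple connectivity to the other components of $B$ via the case analysis of Lemma~\ref{preimageSC}. Your Riemann--Hurwitz computation on $f_\lambda\colon B_0\to B_0$ and on $f_\lambda\colon U\to B_0$ reaches the same conclusion by a degree count rather than a nesting argument, and this is fine, modulo the need to justify that $B_0$ is bounded so that the restriction is proper (``not an asymptotic tract'' does not by itself give bounded; the paper sidesteps this by working with the bounded $U_n$ directly).

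The genuine gap is the Herman ring case, and it is not a small one: your argument is circular there. Your induction propagates simple connectivity \emph{backwards} along chains $U\to f(U)\to\cdots$ that terminate in a periodic cycle you have already shown to consist of simply connected domains. A Herman ring cycle is not in any such chain. It is self-periodic, it contains no singular value (so your degree-one Riemann--Hurwitz step would only show that preimages of a Herman ring are again annuli, never disks), and its basin never leads back to $B_0$ or to the immediate basin of an attracting cycle. Consequently your induction never touches a Herman ring, and the final sentence ``Herman rings are thereby excluded'' is exactly the claim the induction cannot reach. The paper's logic runs in the opposite direction: it proves the absence of Herman rings first, as a standalone Theorem~\ref{NOHR}, by a dedicated argument that tracks the invariant curves $\gamma_i$ and the disks they bound, splits into cases according to where the asymptotic values sit relative to those disks, and derives a contradiction from normality or from the covering structure of $f$; only \emph{then} does it use the absence of Herman rings (together with Lemma~\ref{simpconn}) to conclude that every Fatou component is simply connected (this is how Corollary~\ref{Jconn} is phrased). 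To repair your proof you would need to supply that independent argument; as written the Herman ring exclusion is assumed, not proved.
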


\begin{thmC}  If $f_{\la}$ is hyperbolic and $B_0$ does not contain an asymptotic value, the Julia set is connected and locally connected.  
\end{thmC}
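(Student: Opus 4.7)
Connectedness of $J(f_\la)$ follows at once from Theorem B: once every Fatou component is simply connected, a standard argument (going back to Baker in the entire case and adapted to the meromorphic setting by Domínguez) gives that the Julia set is connected. The real work is local connectedness, and my plan is the classical three-step program adapted to the transcendental setting. First, I would parametrize the immediate super-attracting basin $B_0$ by a Böttcher coordinate and prove that its internal rays all land. Second, I would transfer the resulting parametrization of $\overline{B_0}$ to every other Fatou component by branches of $f_\la^{-n}$ with bounded distortion. Third, I would combine local connectedness of each component boundary with a shrinking estimate on Fatou-component diameters to deduce local connectedness of $J(f_\la)$ as a whole.

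The first step is where Theorem B and the hypothesis enter decisively. Because $B_0$ is simply connected (Theorem B), because $0$ is the only critical value of $f_\la$, and because by hypothesis no asymptotic value lies in $B_0$, Böttcher's theorem extends by pullback to a conformal isomorphism $\phi:B_0\to\DD$ with $\phi\circ f_\la = \phi^{pq}$. I would then define the internal ray at angle $\theta$ as $R_\theta=\phi^{-1}(\{re^{2\pi i\theta}:0\le r<1\})$ and show that each $R_\theta$ lands at a unique point of $\partial B_0$ with the landing map continuous in $\theta$. Landing reduces, via the functional equation, to a shrinking estimate on iterated preimages of a fixed ray-segment; this I would extract from expansion of $f_\la$ in the hyperbolic metric on $\chat\setminus\overline{P(f_\la)}$, which is available because $f_\la$ is hyperbolic. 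A continuous parametrization $S^1\to\partial B_0$ then gives local connectedness of $\partial B_0$.

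For the second step, hyperbolicity implies that every Fatou component $V$ is mapped onto $B_0$ by some iterate $f_\la^n$; the simple connectivity of $V$ (Theorem B) together with the finiteness of the singular set makes this cover of finite degree, and a branch of $f_\la^{-n}\circ \phi^{-1}$, controlled by Koebe distortion in the hyperbolic metric, pulls the parametrization of $\overline{B_0}$ back to a continuous parametrization of $\overline{V}$. For the third step, I would show that for every $\varepsilon>0$ only finitely many Fatou components have spherical diameter exceeding $\varepsilon$; combined with the previous step, this standard fact (cf.\ \cite{M2} for the rational case) yields local connectedness of $J(f_\la)$.

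The main obstacle, and the reason new techniques beyond the rational case are required, is the combination of the essential singularity at $\infty$ with poles of order $q>1$. Near such a pole, $f_\la$ is highly \emph{non-expansive} in the Euclidean sense, so branches of $f_\la^{-1}$ applied to small disks produce long, thin tracts running out toward $\infty$; these tracts threaten both the ray-landing argument in step one and the diameter-shrinking estimate in step three. I will control them using the Rippon–Stallard estimate cited in the introduction, which provides the right quantitative bound on the Euclidean geometry of $f_\la^{-1}$ near multiple poles. Combined with hyperbolic-metric expansion away from the poles, this is exactly the ingredient that makes the shrinking lemma go through uniformly, including for Fatou components whose closures approach $\infty$.
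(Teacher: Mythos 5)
Your overall framework is the right one and matches the paper's: the Milnor three-lemma program (local connectedness of each Fatou-component boundary, a diameter-shrinking estimate, and the topological lemma gluing these together), with the Rippon--Stallard expansion estimate substituting for the compactness arguments available in the rational case. You also correctly flag poles of order $q>1$ and the essential singularity as the obstacles that make the rational-case argument break down.

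However, there is a genuine gap in your second step. You assert that ``hyperbolicity implies that every Fatou component $V$ is mapped onto $B_0$ by some iterate $f_\la^n$,'' and you use this to pull the parametrization of $\partial B_0$ back to every other component boundary by finite-degree branched covers with Koebe control. This is false: under the hypotheses of Theorem C the asymptotic value need not be attracted to $0$ at all --- it may tend to a nonzero attracting cycle, in which case the components of that cycle and their preimages never map onto $B_0$. The paper's proof of the corresponding lemma splits into two cases, and the nonzero-cycle case requires a separate argument: the periodic component containing the asymptotic value is linearized by a K\"onig coordinate rather than a B\"ottcher coordinate, a ray and fundamental-annulus structure is built there, and the unbounded geometry (the component is infinite-to-one onto itself, its boundary contains infinitely many poles, and the strips cut out by preimages of a distinguished ray must be enumerated) is handled before Rippon--Stallard can be invoked to get unique landing. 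None of this is covered by ``a branch of $f_\la^{-n}\circ\phi^{-1}$ controlled by Koebe distortion.''

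A related, smaller problem is the ``finite degree'' claim itself: even when the asymptotic value \emph{is} attracted to $0$ (but lies outside $B_0$), the component of $f_\la^{-1}$ of the component containing the asymptotic value is an unbounded asymptotic tract, and $f_\la$ restricted to it is an infinite-degree universal covering, not a finite branched cover. So the Koebe-distortion transfer step as you describe it does not apply uniformly; the paper instead uses that the restriction to boundaries is a local homeomorphism (no critical points there) together with the expansion estimate. You should replace the finite-degree claim with this weaker but correct statement and add the nonzero-cycle case; with those repairs the argument aligns with the paper's.
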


The second set of results shows that the parameter plane has a structure analogous to that of the polynomials $P_a$.

\begin{thmD} The bifurcation locus divides the parameter plane into three types of hyperbolic components:  shell components, a central capture component or shift locus, and non-central capture components.   The shift locus is a punctured disk, but the remaining components are simply connected.   There are exactly $2q$ unbounded shell components and no unbounded capture components.   
\end{thmD}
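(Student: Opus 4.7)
My plan is to organize the proof around the dynamical fate of the free singular values, namely the one or two asymptotic values $v_\la$, since the critical value at the origin is always super-attracted to $0$ and therefore contributes no bifurcation on its own. For a hyperbolic parameter $\la$ there are three mutually exclusive possibilities: either some forward iterate of $v_\la$ is absorbed by an attracting periodic cycle disjoint from $B_0$ (shell); or $v_\la$ already lies in $B_0$ (central capture, which by Theorem~A coincides with the shift locus); or $v_\la$ is captured by $B_0$ only after a positive number of iterates (non-central capture). Each of these is an open, perturbation-stable condition, so together they exhaust the hyperbolic locus, and the partition is the one advertised by Theorems~A--C.

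For a shell component $W$, I plan to use the multiplier map $\rho:W\to\DD$ sending $\la$ to the multiplier of its attracting cycle. A standard quasiconformal surgery argument shows $\rho$ is a proper holomorphic map, and since the unique free asymptotic value pins down the cycle, $\rho$ has degree one, so $W\cong\DD$. For a non-central capture component I would use the parameterization
\[
\Psi(\la)=\phi_\la\bigl(f_\la^n(v_\la)\bigr),
\]
where $n\ge 1$ is the (locally constant) first entry time of $v_\la$ into $B_0$, and $\phi_\la$ is the Böttcher coordinate at the super-attracting fixed point $0$, which by Theorem~B is univalent on $B_0$. Proving $\Psi$ is a proper holomorphic map of degree one onto $\DD$ yields simple connectivity. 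For the central capture component the same parameterization with $n=0$ gives $\Psi(\la)=\phi_\la(v_\la)$; since $v_\la$ is linear in $\la$ and vanishes only at the excluded value $\la=0$, the image misses $0\in\DD$ while otherwise covering $\DD\setminus\{0\}$ properly, yielding the punctured disk.

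For the counting statements, I would analyze the asymptotic dynamics of $f_\la$ as $|\la|\to\infty$. The family carries a rotational symmetry obtained by combining $z\mapsto\zeta z$ with a matching rotation of $\la$, which organizes the parameter plane into $2q$ congruent sectors; in each sector, localized analysis near one of the $2q$ primary prepoles given by $z^q=\pm\pi/2$ produces a persistent attracting cycle that absorbs the orbit of $v_\la$, providing exactly one unbounded shell component per sector, hence $2q$ in total. To rule out unbounded capture components I would show that if $|\la|\to\infty$ in a capture component then $v_\la$ and all its iterates $f_\la^k(v_\la)$ escape to infinity, so they cannot enter $B_0$, contradicting the definition of capture; the argument uses the Rippon--Stallard estimate already invoked for Theorem~A to control the geometry of $B_0$ and the orbits of singular values near infinity.

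The main obstacle I anticipate is establishing properness of $\Psi$ at the boundaries of capture and central components, because $B_0$ and $\phi_\la$ can degenerate as $\la$ approaches the bifurcation locus, and the presence of poles of multiplicity greater than one together with the essential singularity at $\infty$ require precisely the transcendental techniques developed earlier in the paper. The sectorial analysis at infinity needed to isolate exactly one unbounded shell per sector — no more and no less — will likewise demand careful asymptotic control of how the orbit of $v_\la$ approaches the attracting cycle near each of the $2q$ primary prepoles, and ruling out that two such unbounded components could merge at infinity will be the most delicate point of the argument.
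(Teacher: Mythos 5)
Your overall architecture — classify hyperbolic parameters by the fate of the asymptotic value(s), then parameterize capture components by the Böttcher image of the first iterate of $v_\lambda$ landing in $B_0$, and handle the central component as a punctured disk via $\phi_\lambda(v_\lambda)$ — matches the paper's Theorems~\ref{C0} and~\ref{Cn} in spirit. But the argument you give for simple connectivity of the \emph{shell} components is wrong in a way that would invalidate the conclusion. Since the only critical value of $f_\lambda$ is the super-attracting fixed point $0$, a non-zero attracting cycle can never contain a critical point, so its multiplier lies in $\DD\setminus\{0\}$ and is \emph{never} zero. The multiplier map on a shell component therefore omits $0$; if, as you claim, it were a proper degree-one map, the component would be biholomorphic to $\DD\setminus\{0\}$, a ring domain — the opposite of what you need. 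The actual result (Theorem~\ref{1FK}, from \cite{FK}) is that $\rho$ is a \emph{universal covering} onto $\CC^*$ (equivalently onto $\DD\setminus\{0\}$), hence of infinite degree and not proper, and simple connectivity follows because the universal cover of a punctured disk is a disk. This ``shell'' structure is precisely what distinguishes these components from the Mandelbrot-like components of the polynomial model you are implicitly invoking, and it is the heart of the matter: your surgery-based properness argument fails because properness is false, not because of a technical obstacle.

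Two smaller points. For the central component, the paper does not get a degree-one map: in the monic coordinate $\mu=\lambda^{1/(pq-1)}$ the map $\mu\mapsto\phi_h(w_\mu)$ has a branch point of order $pq$ at $0$, so the extended map $\mathcal C_0\cup\{0\}\to\DD$ is proper of degree $pq$ branched only at $0$; the ``punctured disk'' conclusion then comes from Riemann--Hurwitz, not from degree one. (Your degree-one claim happens to give the same final answer, but the stated reason — $v_\lambda$ being linear in $\lambda$ — does not account for the Böttcher normalization and would not survive scrutiny.) For the counting statements, the paper defers to \cite{CK1}; your sketch of a sector-by-sector analysis near the primary prepoles, and of an escape argument to exclude unbounded capture components, is a plausible outline but would need the asymptotic estimates proved there rather than the Rippon--Stallard bound alone, which controls derivatives along the Julia set and does not by itself track where $f_\lambda^k(v_\lambda)$ goes as $|\lambda|\to\infty$.
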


The  properties of the  shell components are analyzed in \cite{FK} and \cite{CK1}.  In particular, they are simply connected and have piecewise analytic boundaries.  Here we concentrate on the properties of the capture components and their boundaries.  We prove
\begin{thmE} The shift locus contains the disk of radius $(\pi/4)^{1/q}$ centered at the origin.  There are only finitely many hyperbolic components with diameter greater than a given $\epsilon >0$.  
\end{thmE}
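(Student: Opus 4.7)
For the first statement, set $r := (\pi/4)^{1/q}$ and assume $|\la| < r$. The elementary identity
\[
|\tan(x+iy)|^2 \;=\; \frac{\sin^2 x + \sinh^2 y}{\cos^2 x + \sinh^2 y},
\]
combined with $\sin^2 x \leq \cos^2 x$ when $|x| \leq \pi/4$, gives $|\tan w| \leq 1$ whenever $|w| \leq \pi/4$. For $|z| < r$ we have $|z^q| = |z|^q < \pi/4$, so $|\tan z^q| \leq 1$ and $|f_\la(z)| = |\la||\tan z^q|^p \leq |\la| < r$. Thus $f_\la(D_r) \Subset D_r$, so $\{f_\la^n\}$ is a uniformly bounded, hence normal, family on $D_r$, and the connected open set $D_r$ lies in the Fatou component of the super-attracting fixed point $0$, namely $B_0$. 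A direct computation of the limits of $\la\tan^p z^q$ along the $q$ rays $\arg z = \pm\pi/(2q) + 2\pi k/q$ gives the asymptotic values as $\pm\la i^p$ (or just $\la i^p$ when $p$ is even), each of modulus $|\la| < r$, so they lie in $D_r \subset B_0$. Theorem A then places $\la$ in the shift locus.

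For the second statement, the $2q$ unbounded shell components from Theorem D and the single shift locus together contribute only finitely many to any count, so it suffices to bound the number of remaining bounded, simply connected hyperbolic components with diameter $\geq \epsilon$. By \cite{FK, CK1}, each such $U$ admits a canonical conformal parameterization $\Phi_U : U \to \DD$: the multiplier of the attracting cycle when $U$ is a shell component, or the map $\la \mapsto \phi_\la(f_\la^n(v(\la)))$ (with $\phi_\la$ the B\"ottcher coordinate at $0$) when $U$ is a non-central capture component of level $n$. Applying Koebe's one-quarter theorem to the univalent inverse $\Phi_U^{-1} : \DD \to U$ yields
\[
\operatorname{diam}(U) \;\leq\; \frac{4}{|\Phi_U'(c_U)|},
\]
where $c_U \in U$ is the center (the unique zero of $\Phi_U$). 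So $\operatorname{diam}(U) \geq \epsilon$ forces $|\Phi_U'(c_U)| \leq 4/\epsilon$.

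The proof concludes by showing that only finitely many centers satisfy this derivative bound. One expresses $\Phi_U'(c_U)$ via $(f_{c_U}^n)'$ evaluated along the attracting periodic orbit (shell case) or along the orbit of the asymptotic value up to the capture step (capture case), and invokes hyperbolic expansion of $f_\la$ on orbits staying in a region bounded away from $B_0$ to conclude that $|\Phi_U'(c_U)|$ grows without bound as the period or capture level $n$ grows or as $c_U$ escapes to infinity in parameter space. The main technical obstacle is making these expansion estimates uniform in $\la$: non-central capture components may accumulate near infinity in the parameter plane, and therefore one must control the action of $f_\la^n$ on preimages of $B_0$ for arbitrarily large $|\la|$, using that $0$ is the only critical value and that the orbits relevant to capture components must pass once close to a pole before landing in $B_0$.
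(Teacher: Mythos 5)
Your proof of the first claim (the disk $D=D(0,(\pi/4)^{1/q})$ lies in the shift locus) is correct and is essentially the same as the paper's Lemma on the disk, though yours is cleaner: the paper works to establish the strict contraction $|f_\la(z)|<|z|$ on $D$ via the auxiliary real function $x\mapsto x/\tan x^q$, whereas you only establish $f_\la(D)\Subset D$ and then invoke Montel, which is all that is needed. Both arguments then locate the asymptotic values (modulus $|\la|$) inside $D\subset B_0$ and conclude via Theorem A.

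Your proof of the second claim has genuine gaps. First, shell components do not admit the conformal parameterization $\Phi_U\colon U\to\DD$ you describe: by Theorem~\ref{1FK}, the multiplier map $\rho\colon\Omega_n\to\CC^*$ is a \emph{universal covering}, not a biholomorphism to the disk, and $\rho$ never vanishes on $\Omega_n$ (shell components have virtual centers, not centers), so there is no point $c_U$ with $\Phi_U(c_U)=0$. Second, your Koebe inequality goes the wrong way: the one-quarter theorem applied to $\Phi_U^{-1}$ gives $\operatorname{diam}(U)\geq \tfrac{1}{2}|(\Phi_U^{-1})'(0)|$, a \emph{lower} bound on the diameter, not an upper bound; the image of a univalent map from $\DD$ can have arbitrarily large (even infinite) diameter for any value of the derivative at the origin, so $\operatorname{diam}(U)\leq 4/|\Phi_U'(c_U)|$ is false. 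To bound the diameter from above you would need to already know that $U$ is trapped inside a small region, which is precisely the missing ingredient. Third, the concluding "expansion estimates" are where all the real content would have to live, and as stated they presuppose uniform control of $f_\la^n$ for unboundedly large $|\la|$ which you acknowledge but do not supply. The paper's actual proof (see the discussion surrounding Theorem~\ref{bifloc2}) takes a different route: it imports from \cite{CK1} the result that all bounded hyperbolic components are confined to a sequence of "rectangular-like" regions anchored near the zeros $(k\pi)^{1/q}$ along the Julia rays, and then shows, by a computation parallel to Lemma~\ref{key2} (pulling back bounded rectangles through $z\mapsto z^q$), that the diameters of these confining regions tend to $0$ as $k\to\infty$. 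The finiteness statement then follows at once. You would need some analogue of this confinement result to make your outline work.
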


A parabolic point is a parameter for which the function has a parabolic cycle and a Misiurewicz point is a parameter for which an asymptotic value of the function is eventually periodic.  Such a function is not hyperbolic.  
For the polynomial families it is known (\cite{M2, Roe, EMS} and the references therein) that these types of points are landing points of curves in the shift locus and that they are dense in its boundary.   The techniques used to prove these results depend heavily on the maps having finite degree.  Our final result generalizes these results to our family of infinite degree maps. 

\begin{thmF}  The boundary of the shift locus contains a  a dense set of  parabolic  and Misiurewicz points  and they accessible; that is,  landing points of curves.   The boundaries of the non-central capture components contain a dense set of Misiurewicz points  which are accessible but they contain no parabolic points. \end{thmF}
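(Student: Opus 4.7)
The plan is to adapt the classical parameter ray construction from the polynomial theory \cite{Roe, EMS} to our transcendental setting. Since $0$ is a super-attractive fixed point of local degree $pq$, the Böttcher coordinate $\phi_\la$ conjugates $f_\la$ on a maximal conformal neighborhood of $0$ in $B_0$ to $w\mapsto w^{pq}$. On the shift locus $\cals_0$ the asymptotic value $v(\la)$ lies in $B_0$, so I would define $\Phi_0(\la)=\phi_\la(v(\la))$; by Theorem D this yields a proper holomorphic map $\Phi_0\colon \cals_0\to \DD^*$. For each non-central capture component $\calc$, there is a minimal $n_\calc\ge 1$ with $f_\la^{n_\calc}(v(\la))\in B_0$ for all $\la\in\calc$, and I would set $\Phi_\calc(\la)=\phi_\la(f_\la^{n_\calc}(v(\la)))$, which is a conformal isomorphism onto $\DD$. \emph{Parameter rays} in each component are then defined as preimages of straight radial segments under these uniformizations.

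The density and accessibility of parabolic and Misiurewicz points on $\partial\cals_0$ would follow in two steps. First, in the dynamical plane, I would show that the dynamic rays of $B_0$ with argument rational in lowest terms land on the Julia set at preperiodic points; this is the transcendental analogue of the Douady--Hubbard landing theorem, proved by pulling back the $w^{pq}$ model via $\phi_\la$ and invoking the Rippon--Stallard estimate from the proof of Theorem C to obtain the hyperbolic expansion needed to cross the poles of multiplicity $p$. Second, by $J$-stability of the family across $\cals_0$, the dynamic rays move holomorphically with $\la$, so a parameter ray of rational argument in $\cals_0$ has a well-defined limit $\la^*\in \partial \cals_0$ characterized by the condition that $v(\la^*)$ lies on the landing point of the corresponding dynamic ray; when the argument is periodic under multiplication by $pq$, the orbit of $v(\la^*)$ is eventually periodic on the Julia set, giving a parabolic or Misiurewicz parameter. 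Density then follows from the density of rational arguments on $\partial\DD$ together with the covering property of $\Phi_0$, and accessibility is built into the definition.

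For non-central capture components, the same construction with $\Phi_\calc$ produces a dense set of Misiurewicz landing points on $\partial\calc$, with accessibility again built in. The assertion that $\partial\calc$ contains \emph{no} parabolic points requires a separate argument. If $\la^*\in \partial\calc$ were parabolic, then by the standard bifurcation theory (cf.\ \cite{FK,CK1}) every neighborhood of $\la^*$ would contain a hyperbolic component in which a second attracting cycle, born from the parabolic, captures a singular orbit; but the structural characterization of $\calc$ forces the orbit of $v(\la)$ to be absorbed into $B_0$, leaving no free singular orbit available to support such a second attracting cycle. I would make this rigorous by a perturbation argument, tracking $\phi_\la(f_\la^{n_\calc}(v(\la)))$ through the bifurcation and showing that it cannot simultaneously lie in $\DD$ and on a new attracting petal.

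The main technical obstacle is justifying that the parameter rays actually land in $\partial\cals_0$ and $\partial\calc$: the classical argument uses the compactness of the filled Julia set and finite degree to control distortion, neither of which is available here. The two critical inputs I expect to need are the Rippon--Stallard expansion estimate, which handles the high-order poles in the Julia set, and the $J$-stability of the family throughout each hyperbolic component, which upgrades the landing of dynamic rays to a holomorphic motion and thence to landing of parameter rays. Once landing is secured, the density and accessibility statements reduce to the covering behavior of $\Phi_0$ and conformality of $\Phi_\calc$, together with the density of rational angles on the unit circle.
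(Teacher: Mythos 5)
Your overall architecture matches the paper's: uniformize $\calc_0$ and each non-central capture component by $\la\mapsto\phi_\la(f_\la^n(v_\la))$ (Theorems~\ref{C0}, \ref{Cn}), define parameter rays as preimages of radii (Definition~\ref{param rays}), and use the holomorphic dependence of periodic points on $\la$ to show rational rays land. However, two steps in your proposal have genuine gaps.

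The first is the density argument. You claim density of parabolic/Misiurewicz points on $\partial\calc_0$ "follows from the density of rational arguments on $\partial\DD$ together with the covering property of $\Phi_0$." This does not follow. Density of rational angles gives density of rational parameter \emph{rays} inside $\calc_0$, but density of their \emph{landing points} on the boundary would require local connectivity of $\partial\calc_0$ (the parameter-space analogue of MLC), which is neither proved nor used. The paper instead proves density by a Montel/normal-family argument (Theorem~\ref{bifloc1}): if Misiurewicz points were not dense on $\partial\calc$, the maps $h_n(\la)=f_\la^n(v_\la)$ would omit the values $(k\pi+\pi/4)^{1/q}$ on an open set meeting $\partial\calc$, hence be normal there, contradicting that $h_n$ converges to $0$ on one side of the boundary and not on the other. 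Your outline cannot conclude density without a replacement for this argument.

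The second gap is in the landing step. You reduce to a "Douady--Hubbard landing theorem" for dynamic rays, pulled back with Rippon--Stallard expansion. This handles the case where the periodic cycle of dynamic rays ends at a finite repelling cycle, but it misses a genuinely transcendental case: the periodic dynamic ray through $v_\la$ may be an \emph{asymptotic path} that tends to $\infty$, so there is no finite landing point to move holomorphically. The paper's Theorem~\ref{ratl rays} splits into cases on precisely this dichotomy and treats the asymptotic-path case by restricting to a real one-parameter slice and applying the monotonicity Lemma~\ref{reallines} to produce a parabolic endpoint. Without this case your argument would incorrectly exclude the parabolic landing points that actually occur (e.g.\ the cusps of Theorem~\ref{cusps}). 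Finally, for "no parabolic points on $\partial C_n$" the paper's argument (Theorem~\ref{ratl rays n}) is simpler than your proposed bifurcation analysis: since $v_\la\notin B_0$ for $\la$ in a non-central component, the dynamic ray containing $v_\la$ is necessarily preperiodic, forcing the $l>0$ case where the limit cycle is repelling. Your bifurcation route would also need to control the superattracting orbit at $0$, which absorbs the critical points, and is unnecessarily indirect.
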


The paper is organized as follows.  After defining our basic concepts and setting notation we introduce the family $f_{\la}=\la \tan^p z^q$ in section~\ref{family} and prove the results that constitute theorems A and B.   In section~\ref{hyper} we discuss hyperbolicity in our context and in section~\ref{Julia dichotomy} we prove the theorems whose results we have combined in theorem C.  We then turn to the parameter plane and in section~\ref{param} we show how the symmetries of the functions are reflected there.    We also recall earlier  results that describe the structure of the shell components.   In section~\ref{capture} we prove our results about capture components and their boundaries including theorem E and the boundary accessibility results in theorem~F.   The last section contains results about the bifurcation locus and the rest of theorem E.   
 
\section{Basics and Tools}
\label{basics}
 \subsection{Meromorphic functions.} Let $f:\mathbb{C}\to \widehat{\mathbb{C}}$ be a transcendental meromorphic function and let $f^n(z)$ denote the $n-$th iteration of $f$, that is $f^n(z)=f(f^{n-1})(z)$ for $n\geq 1$. Then $f^n$ is well-defined except at the poles of the functions $f, f^2, \ldots, f^{n-1}, \ldots$, which form a countable set. The dynamics divide the plane into two complementary sets, the \emph{Fatou set} where the dynamics are stable and the \emph{Julia set} where the dynamics are unstable or chaotic.  More precisely,   the Fatou set $F(f)$ of $f$ is
$$\{ z\in \mathbb{C} \ | \  \text{ for all } n,  f^n \text{ is defined  and normal in a neighborhood of } z\};$$  the Julia set $J(f)=\widehat{\mathbb{C}}\setminus F(f)$.  It contains infinity and all the poles of $f^n$. 

If $f$ is a meromorphic function with more than one pole, then the set of prepoles $\mathcal{P}=\cup_{n\geq 1}f^{-n}(\infty)$ is infinite. By the big Picard theorem, $f^n$ is normal on $\widehat{\mathbb{C}}\setminus\overline{\mathcal{P}}$, and $J(f)=\overline{\mathcal{P}}$, (see e.g. \cite{BKL1}). In this case, the Fatou set is the interior of the set where $f^n(z)$ is well-defined for all $n\geq 1.$

A point $c\in \mathbb{C}$ is called a \emph{critical point} of a meromorphic function, if $f'(c)=0$ or it is a pole of order greater than $1$. The image of a critical point is called a \emph{critical value} of $f$. A point $v\in \widehat{\mathbb{C}}$ is called an \emph{asymptotic value} if there is a path $\gamma: [0,\infty)\to \mathbb{C}$ such that $$\lim_{t\to \infty} \gamma(t)=\infty \text{ and } \lim_{t\to \infty} f(\gamma(t))=v.$$ For an asymptotic value $v$, if there is a simply connected unbounded domain $A$ such that $f(A)$ is a punctured neighborhood of $v$ and $f|_A$ is a universal covering, then $A$ is called an \emph{asymptotic tract} of $v$ and $v$ is called a \emph{logarithmic singularity}. For example if $f(z)=\tan z$. It is easy to check that $$\lim_{y\to \infty} \tan ( iy)=i\ \text{and}\ \lim_{y\to -\infty} \tan ( iy)=-i,$$ therefore $\pm i$ are asymptotic values of $f(z)$. In fact, they are logarithmic singularities with asymptotic tracts contained in the upper and half plane, respectively.

The set of \emph{singular values }of $f$, denoted by $S(f)$, is the closure of the set of  points in $\CC$ that are asymptotic or  critical  values of $f$.  In fact, $S(f)$ consists of those values at which $f$ is not a regular covering, that is $$f:\mathbb{C}\setminus f^{-1}(S(f))\to \widehat{\mathbb{C}}\setminus S(f)$$ is a covering.\footnote{Rational maps are defined at infinity so they may have critical values there.}
The set 
$$PS(f)=\overline{\cup_{v \in S(f)}\cup_{n=0}^\infty f^n(v)}$$
 is called the \emph{post-singular set} of $f$.

\begin{defn}
A meromorphic function $f$ is called a {\em singularly finite map if $S(f)$} is a finite set.
\end{defn}

A point $z$ is   a {\em periodic point of order $p\geq 1$}, if $f^p(z)=z$ and $f^k(z)\neq z$ for any $k<p$. The multiplier of the cycle is defined to be $\alpha=(f^p)'(z).$ The periodic point is \emph{attracting} if $0<|\alpha|<1$, \emph{super-attracting} if $\alpha=0$, \emph{parabolic} if $\alpha=e^{2\pi i \theta}$, where $\theta$ is a rational number, \emph{repelling} if $|\alpha|>1$.

Just as for rational maps, the repelling periodic points are dense in the Julia set, (see \cite{BKL1}). 
 
 \subsection{Fatou or stable sets of meromorphic functions}

Let $D$ be a component of the Fatou set, then $f(D)$ is either a component of the Fatou set or a component missing one point. For the orbit of $D$ under $f$, there are only two cases:
\begin{itemize}
\item there exists integers $m\neq n\geq 0$ such that $f^m(D)\subset f^n(D)$, and $D$ is called \emph{eventually periodic};
 \item for all $m\neq n$, $f^n(D)\cap f^m(D)=\emptyset$, and $D$ is called a \emph{wandering domain}.
\end{itemize}

Suppose that $\{D_0, \cdots, D_{p-1}\}$ is periodic, then either:
\begin{enumerate}
\item $D_i$ is (super)attractive: each $D_i$ contains a point of a periodic cycle with multiplier $|\alpha|<1$ and all points in each $D_i$ are attracted to this cycle. Some domain
in this cycle must contain a critical or an asymptotic value. If $\alpha=0$, the critical point itself belongs to the periodic cycle and the domain is called superattractive.
\item $D_i$ is parabolic: the boundary of each $D_i$ contains a point of a periodic cycle with multiplier $\alpha=e^{2\pi i p/q}$, $(p,q)=1$, and all points in each domain $D_i$ are
attracted to this cycle. Some domain in this cycle must contain a critical or an asymptotic value.
\item  $D_i$ is a Siegel disk: each $D_i$ contains a point of a periodic cycle with multiplier $\alpha=e^{2\pi i \theta}$, $\theta$ irrational. There is a      holomorphic homeomorphism
mapping each $D_i$ to the unit disk $\DD$, and conjugating the first return map
$f^p$ on $D_i$ to an irrational rotation, $z \mapsto \alpha z$, of $\DD$. The preimages under this conjugacy
of the circles $|\xi|=r, r<1$, foliate the disks $D_i$ with $f^p$ forward invariant
leaves on which $f^p$ is injective.
\item $D_i$ is a Herman ring: each $D_i$ is holomorphically homeomorphic to a round 
annulus and the first return map is conjugate to an irrational rotation of
the annulus by a holomorphic homeomorphism. The preimages under this conjugacy of the circles $|\xi|=r, 1<r<R$, foliate the disks with $f^p$ forward invariant leaves on which $f^p$ is injective.
\item  $D_i$ is an essentially parabolic (Baker) domain: the boundary of each $D_i$
contains a point $z_i$ (possibly $\infty$), $f^{np}(z)\to z_i$ for all $z\in D_i$, but $f^p$ is not
holomorphic at $z_i$. If $p=1$, then $z_0=\infty$.
\end{enumerate}

\begin{thm}\label{NoWD}\cite{BKL4}
If $f$ is a singularly finite map, then there are no wandering domains in the Fatou set.
\end{thm}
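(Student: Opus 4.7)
I would proceed by contradiction, following the quasiconformal deformation strategy that Sullivan used for rational maps and that was adapted to transcendental maps of finite type by Eremenko--Lyubich and Goldberg--Keen. Suppose $D$ is a wandering domain, so the iterates $D_n=f^n(D)$ are pairwise disjoint. Because $S(f)$ is finite, standard arguments on limit functions of wandering domains (going back to Baker and Sullivan) show that the closures $\overline{D_n}$ are eventually disjoint from $PS(f)$, so for $n$ large $f|_{D_n}$ is an unramified covering onto $D_{n+1}$ and inverse branches of every iterate are well defined on each such $D_n$.

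The plan is then to build an infinite-dimensional space of quasiconformal deformations of $f$ supported on the grand orbit of $D$. On $D$ choose a Beltrami differential $\nu\in L^\infty(D)$ with $\|\nu\|_\infty<1$ and compact support, and for $t$ in a small disk set $\mu_t=t\nu$. Propagate $\mu_t$ to the entire grand orbit of $D$ by imposing $f$-invariance $f^*\mu_t=\mu_t$ (using the inverse branches just described), and extend by zero off this grand orbit. This yields a measurable $f$-invariant Beltrami coefficient on $\mathbb{C}$ with $\|\mu_t\|_\infty<1$.

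Next I would apply the measurable Riemann mapping theorem, with a three-point normalization, to obtain a holomorphic family of quasiconformal homeomorphisms $\phi_t:\widehat{\mathbb{C}}\to\widehat{\mathbb{C}}$ satisfying $\bar\partial\phi_t=\mu_t\,\partial\phi_t$. Set $f_t:=\phi_t\circ f\circ \phi_t^{-1}$. The $f$-invariance of $\mu_t$ forces $f_t$ to be meromorphic, and since $\phi_t$ is a global quasiconformal homeomorphism it carries critical points of $f$ to critical points of $f_t$ of the same multiplicities and each asymptotic tract of $f$ to an asymptotic tract of $f_t$. Thus $f_t$ is singularly finite with the same combinatorial type as $f$, so it lives in the same finite-dimensional parameter family as $f$ (up to affine conjugation).

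The contradiction then follows from finite-dimensionality of the moduli space of singularly finite meromorphic functions of a fixed type, modulo affine conjugation, versus the genuinely infinite-dimensional space of permissible $\nu$ on $D$. The main technical obstacle is verifying that distinct Beltrami coefficients on $D$ produce distinct $f_t$ modulo affine conjugation, i.e., that the parametrization $\nu\mapsto f_t$ is injective on an infinite-dimensional subspace. This is done by checking that the grand-orbit equivalence relation on $D$ has only countably many equivalence classes in any compact set, so the Teichm\"uller space of $D$ modulo the grand orbit is itself infinite-dimensional and maps non-trivially into the finite-dimensional moduli space of singularly finite maps, which is impossible.
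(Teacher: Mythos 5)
The paper does not contain a proof of this theorem; it is cited directly from Baker--Kotus--L\"u \cite{BKL4}, which establishes it by exactly the Sullivan-type quasiconformal deformation argument you sketch (the extension to finite-type transcendental maps pioneered by Goldberg--Keen and Eremenko--Lyubich). Your outline therefore matches the cited source and is correct in spirit. Two points of precision are worth fixing before this could be called a proof: the assertion that $\overline{D_n}\cap PS(f)=\emptyset$ for large $n$ is stronger than needed and not immediate (since $\partial D_n\subset J(f)$ can meet $PS(f)$); what you actually use is only that, because the $D_n$ are pairwise disjoint and $S(f)$ is finite, for large $n$ the domain $D_n$ contains no critical point and $D_{n+1}$ contains no asymptotic value, so $f\colon D_n\to D_{n+1}$ is an unramified covering. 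And to propagate $\nu$ by $f$-invariance you must first make it constant on the grand-orbit equivalence classes inside $D$ (equivalently, define it on the quotient Riemann surface $D/\!\sim$ and lift), since a single grand orbit may meet $D$ in more than one point; the concluding contradiction then needs the standard Sullivan case analysis ensuring $D/\!\sim$ has infinite-dimensional Teichm\"uller space (ruling out the annulus/torus-type degenerations), not merely discreteness of the classes, before it can be played off against the finite-dimensional natural family of a singularly finite map.
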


\begin{thm}\label{NoBD}\cite{Ber,RS}
If $f$ is a singularly finite map, then there are no Baker domains in the Fatou set.
\end{thm}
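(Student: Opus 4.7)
The plan is to argue by contradiction, assuming $f$ is singularly finite but admits a periodic Baker cycle $\{D_0,\dots,D_{p-1}\}$. After replacing $f$ by $f^p$ (which is still singularly finite, with singular set equal to $\bigcup_{k=0}^{p-1}f^k(S(f))$), I reduce to the case of a fully invariant Baker domain $D$, so there is a boundary point $z_0\in\partial D$ (equal to $\infty$ when $p=1$, or a prepole of the original $f$ otherwise) such that $f^n(w)\to z_0$ locally uniformly on $D$ while $f$ is not holomorphic at $z_0$.

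The central tool I would invoke is Bergweiler's theorem on Baker domains (this is precisely the content of \cite{Ber}, with a sharper version in \cite{RS}): for any invariant Baker domain $D$ and any $w\in D$, writing $w_n=f^n(w)$, the sequence $\{w_n\}$ must remain hyperbolically close to the postsingular set, in the sense that
\[
\liminf_{n\to\infty}\operatorname{dist}\bigl(w_n,\,P(f)\setminus\{z_0\}\bigr)=0
\]
in a suitable metric. Heuristically, this is because iterates on $D$ converge to $z_0$ at a controlled rate forced by Denjoy--Wolff-type behavior in a linearizing coordinate, and without a nearby singular orbit to ``feed'' this drift the Fatou component would contract conformally, contradicting the invariance $f(D)=D$.

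Given this tool, the contradiction is obtained by exploiting the finiteness of $S(f)$. By Theorem~\ref{NoWD} there are no wandering domains, so each of the finitely many singular values $v\in S(f)$ either lands in an attracting, super-attracting, parabolic, Siegel or Herman cycle — in which case its forward orbit is precompact in $\CC$ — or is eventually absorbed into a Baker domain. A standard induction on the number of Baker cycles lets me treat $D$ as the ``outermost'' such cycle, so that every $v\in S(f)$ not absorbed into $D$ has forward orbit contained in a fixed compact subset $K\subset\CC$. Thus $P(f)\setminus\{\text{orbits going into }D\}$ is compact. Combining this compactness with the accumulation statement above forces a singular orbit to accompany $w_n$ all the way to $z_0$; but if $z_0=\infty$, this contradicts compactness, and if $z_0$ is a finite prepole, one invokes the normality of $\{f^n\}$ off $\overline{\calp}$ together with the fact that the accumulation points of an orbit converging to an (eventually) periodic cycle lie on that cycle — so the limit point $z_0$ would have to lie on a periodic cycle inside the Fatou set, contradicting $z_0\in J(f)$.

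The main obstacle is the case $p>1$ where $z_0$ is a prepole: here one needs the refined Rippon--Stallard estimate rather than Bergweiler's original version, because the linearizing model at a prepole (composition of a local biholomorphism with a pole of multiplicity $>1$, which is especially relevant for our family $f_\la=\la\tan^p z^q$) is delicate. Controlling the hyperbolic geometry near such a prepole, and verifying that the Rippon--Stallard accumulation statement survives the pass from $f^p$ back to $f$, is the step where care is required; everything else is a clean finiteness argument once that input is in place.
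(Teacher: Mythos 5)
First, a point of context: the paper does not prove this statement at all --- it is quoted from the literature with the citations \cite{Ber,RS}, so there is no internal proof to compare against. Judged on its own terms, your reconstruction has a genuine gap at its central step. You claim that, by Theorem~\ref{NoWD}, every singular value not absorbed into the Baker cycle has precompact forward orbit (landing in an attracting, parabolic, Siegel or Herman cycle), so that $P(f)$ minus the Baker-absorbed orbits is compact. This dichotomy is false: a singular value of a singularly finite map may lie in the Julia set, where the absence of wandering domains says nothing, and its orbit can be unbounded or accumulate on prepoles --- think of $\lambda e^z$ with escaping singular orbit, or of parameters in the very family $f_\la=\la\tan^pz^q$ for which the asymptotic value escapes or has a dense orbit in $J$. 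Since the compactness of $P(f)\setminus\{\text{Baker orbits}\}$ is exactly what your contradiction rests on, the argument collapses. The actual proofs in \cite{Ber,RS} do not use compactness of the postsingular set; they use finiteness of the set of singular \emph{values} to produce a punctured neighborhood of the limit point $z_0$ (or of $\infty$) free of singular values, over which $f$ is a covering, and then combine the bounded hyperbolic step length $d_U(w_n,w_{n+1})$ inside the Baker domain with an expansion estimate (logarithmic change of variable, or the Rippon--Stallard hyperbolic-metric estimate) to reach a contradiction.

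Two further problems: the ``central tool'' you attribute to \cite{Ber} is misstated --- Bergweiler's result on postsingular points accompanying a Baker orbit is proved for entire functions with $f^n\to\infty$ in $U$ and in a different quantitative form, and neither \cite{Ber} nor \cite{RS} supplies the version you invoke for a finite prepole limit $z_0$; and the reduction to $f^p$ plus an induction over an ``outermost'' Baker cycle is shaky, since $f^p$ is not meromorphic in the plane (prepoles become essential singularities, so the quoted theorems do not apply to it directly), the domain obtained is invariant rather than ``fully invariant,'' and the finiteness and ordering of the set of Baker cycles cannot be presupposed. In short, the skeleton (singular-value finiteness versus the drift of a Baker orbit) points in the right direction, but the finiteness argument as written does not prove the theorem; the correct route is the covering-plus-hyperbolic-expansion argument of the cited papers.
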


\subsection{Superattractive fixed points}  We will need the following theorem that characterizes the behavior of a holomorphic map near a superattracting fixed point (see \cite{CG},\cite{M2} for details). 
\begin{thm}\label{Bottchercoor}[The B\"ottcher Coordinate]
Suppose that $f$ is a holomorphic map defined in some neighborhood $U$ of  $0$ and that $0$ is a super-attracting fixed point; that is, $$f(z)=a_mz^m+a_{m+1}z^{m+1} +\cdots, \text{\ where\ } m\geq 2 \text{\ and\ } a_m\neq 0.$$ Then there exists a conformal map $\phi(z)$, called the {\em B\"ottcher coordinate at $0$}, such that $\phi\circ f\circ \phi^{-1}(z)=z^m$; it is unique up to the choice of an  $(m-1)$th root of unity.
 \end{thm}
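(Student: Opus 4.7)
The plan is to construct $\phi$ as the limit of appropriately normalized iterates of $f$. First I would conjugate $f$ by a linear map $z \mapsto cz$ with $c^{m-1} = 1/a_m$ to reduce to the normalized case $f(z) = z^m(1 + z g(z))$ with $g$ holomorphic near $0$; the $m-1$ choices of $c$ will give the stated ambiguity in $\phi$. On a small disk $D_r$ around $0$, chosen so that $|f(z)| \le \tfrac12 |z|$, the iterates $f^n$ are all defined and converge uniformly to $0$. An easy induction gives
\[
f^n(z) = z^{m^n}\bigl(1 + z h_n(z)\bigr),
\]
with $h_n$ holomorphic on $D_r$.

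Next I would define
\[
\phi_n(z) \;=\; z\,\bigl(1 + z h_n(z)\bigr)^{1/m^n},
\]
choosing the holomorphic branch of the $m^n$-th root equal to $1$ at $z=0$. Then $\phi_n$ is holomorphic on $D_r$ with $\phi_n(0)=0$, $\phi_n'(0)=1$, and $\phi_n(z)^{m^n}=f^n(z)$. The key estimate is
\[
\frac{\phi_{n+1}(z)}{\phi_n(z)} \;=\; \left(\frac{f(f^n(z))}{f^n(z)^m}\right)^{1/m^{n+1}} \;=\; \bigl(1 + O(f^n(z))\bigr)^{1/m^{n+1}}.
\]
Since $|f^n(z)|$ decays (in fact super-exponentially) on $D_r$, the right-hand side differs from $1$ by a quantity summable in $n$, uniformly on $D_r$. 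Hence the telescoping product
\[
\phi_n \;=\; \phi_0 \cdot \prod_{k<n} \frac{\phi_{k+1}}{\phi_k}
\]
converges uniformly on $D_r$ to a holomorphic $\phi$ with $\phi(0)=0$ and $\phi'(0)=1$. Passing to the limit in the identity $\phi_n(f(z)) = \phi_{n+1}(z)^m$ yields the functional equation $\phi \circ f \circ \phi^{-1}(w) = w^m$, and undoing the linear normalization returns the coordinate for the original $f$.

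For uniqueness, if $\phi$ and $\psi$ are two such conjugating maps, then $h = \psi \circ \phi^{-1}$ fixes $0$ and satisfies $h(w^m) = h(w)^m$. Comparing power series term by term forces $h(w) = \zeta w$ with $\zeta^{m-1} = 1$, which together with the freedom in the initial normalization accounts for the $(m-1)$-fold ambiguity. The main obstacle is the uniform convergence of the infinite product; it rests on the summability of $|\phi_{n+1}/\phi_n - 1|$, which in turn requires simultaneously the geometric decay of $f^n(z)$ coming from the super-attracting dynamics and the damping factor $1/m^{n+1}$ coming from the root extraction. Once these estimates are made uniform over a sufficiently small invariant disk, the rest of the argument is standard.
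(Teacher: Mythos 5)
Your construction is correct and is essentially the argument the paper relies on: the paper states this theorem citing \cite{CG} and \cite{M2} and records the formula $\phi(z)=z\prod_{n=0}^\infty \bigl(f^{n+1}(z)/(f^{n}(z))^m\bigr)^{m^{-(n+1)}}$, which is exactly your telescoping product $\phi_0\prod_{k}\phi_{k+1}/\phi_k$ after the monic normalization. The convergence estimate, the passage to the limit in $\phi_n(f(z))=\phi_{n+1}(z)^m$, and the uniqueness argument via $h(w^m)=h(w)^m$ all match the standard proof.
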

 
In practice, we usually conjugate by a linear transformation so that $a_m=1$ and the map is monic.
Then,  following \cite{CG},  Theorem $4.1$,  the B\"ottcher map is defined by 
$$\phi(z)=z\prod_{n=0}^\infty (\frac{f^{n+1}(z)}{( f^{n}(z))^m })^{m^{-(n+1)}},$$
 where $f^0(z)=z$. Because we have assumed $f$ monic the B\"ottcher map  is unique and satisfies $$\lim_{z\to 0} \phi(z)/z\to 1.$$
 
Given $\phi$ defined on $U$, one would like to use the functional relation $\phi(f(z))=(\phi(z))^m$ to extend its definition to some maximal domain in the basin of attraction of $0$, and in fact to the entire immediate basin.   This, however, is not always possible.  Defining such an extension involves computing expressions of the form $z\to \sqrt[m]{\phi(f(z))}$ and this does not work in general since the $m$-th root cannot be defined as a single valued function. For example,  if $f(z)=0$ for some  $z \neq 0$ in the basin,   or if the basin is not simply connected. In fact,  $\phi$ can be extended until the pullback meets a singular point  -- and hence to the whole basin  if it contains no singular points.   

\section{The family $\mathcal{F}=\{f_\lambda=\lambda \tan^p z^q$ \}}
\label{family}
The focus of this paper is the family $$\mathcal{F}=\{f_\lambda=\lambda \tan^p z^q, \lambda\in \mathbb{C}\setminus \{0\}, p, q\in \mathbb{N}, pq>1 \}.$$
If $pq=1$, we obtain  the tangent family which has been well studied;  see \cite{KK1,CJK1,CJK2,K}.  It is the paradigm for families of functions with two asymptotic values and no critical values.   When $pq>1$, however, the functions in $\calf$ have a superattractive point at the origin which creates a substantive change in the dynamics.

\subsection{Covering properties of $f_\lambda$}\label{cov} For any $f_\lambda(z)=\lambda\tan^p z^q$, its derivative is $f_{\la}'(z)=pq\lambda z^{q-1}\tan^{p-1} z^q \sec^2 z^q$. Therefore solutions of $z^q=k\pi$ for all $k \in \mathbb{Z}$, are critical points, and their images are the critical value $0$;  moreover, each solution $z^q=k\pi+\pi/2$ for $k\in \mathbb{Z}$ is a pole of $f_\lambda(z)$ of order $p$. That is, if $p\geq 2$, $\infty$ is  a critical value as well as an essential singularity. Since $\tan z$ has two asymptotic values $\pm i$ and two asymptotic tracts, $f_{\la}$ has asymptotic values  $v_{\la}=i^p \la$ and $v_{\la}'=(-i)^p \la$ and has $2q$ asymptotic tracts each of which is mapped to a punctured neighborhood of an asymptotic value. The tracts are separated by the Julia directions along which the poles approach infinity.     If $p$ is odd,   $v_{\la}'=-v_{\la}$ and each has $q$ asymptotic tracts, whereas if $p$ is even, $v_{\la}'=v_{\la}$ and all $2q$ asymptotic tracts correspond to this asymptotic value. 
The singular set $S(f_\lambda)=\{0, v_{\la}, v_{\la}',  \infty\}$ if $p\geq 2$ or $\{0, v_{\la}, v_{\la}' \}$ if $p=1$.  For readability in the rest of this part of the paper we suppress the dependence on $\la$ and set $f=f_{\la}, v=v_{\la}$, etc. 

\subsection{Symmetries of $f$}\label{dyn symmetries}

For any $f \in \calf$ it is easy to see that 
 if $pq$ is even, $f(-z)=f(z)$ whereas if $pq$ is odd $f(-z) = -f(z)$.  

Let $\omega_j=e^{\pi j/q}$,  $j=0, \ldots,2q-1$, denote the $q^{th}$ roots of $\pm 1$.  The labelling is such that if $j$ is even, $\omega_j$ is a root of $+1$ and if $j$ is odd, it is a root of $-1$.  If  $p_k=|(k\pi +\pi/2)^{1/q}|$, the poles of $f$ are 
\[  p_{k,j}=(p_k)^{1/q}\omega_j, \,  j \mbox{ even and } p_{-k,j}=(p_k)^{1/q}\omega_j,  \, j \mbox{ odd }\, k \in \NN, j=0, \ldots, 2q-1. \] 
The poles  lie on rays that define the Julia directions for $f$ and divide the dynamic plane into $2q$ sectors of width $\pi/q$.     

In addition, with the same indices, the zeroes of $f$ are 
\[ 0,  q_{k,j}=|(k\pi)^{1/q}|\omega_j, \, j  \mbox{  even and } q_{-k,j}=|(k\pi)^{1/q}|\omega_j, \, j \mbox{ odd. } \]    They lie on the same rays as the poles.  Denote the Julia ray through $\omega_j$ by   $\delta_j(t)$, $t \in [0, \infty)$.

This tells us that in the dynamic plane of $f$ if $p$ is even, there is a $2q$-fold symmetry:   $f(\omega_j z)=f(z)$ for all $j$;  and if $p$ is odd,  there is a $q$ fold symmetry: $f(\omega_j z)=f(z)$ for $j$ even and $f(\omega_j z)=-f(z)$ for  $j$ odd.   
 In  each of the $2q$ sectors bounded by the Julia rays, there is an asymptotic tract.  Any curve $\gamma_j(t)$, $t \in [0, \infty) $, asymptotic to a ray which is not a Julia ray,  is an asymptotic path whose image lands at an asymptotic value.  If $p$ is odd, the asymptotic tracts of $v$ and $v'$ alternate.

\subsection{The basin of zero}
\label{basin of zero}

Denote the basin of attraction of zero by 
\[ B =\{z\ | \ f^n(z) \to 0 \ \text{as} \ n\to \infty \} \]
and let  $B_0$ be the immediate basin; that is the component of $B$ containing $0$.  Since zero is a superattracting fixed point, the basin is never empty.

\begin{lemma}\label{sym1}
If $z\in B $, then $-z\in B$.
\end{lemma}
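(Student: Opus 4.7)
The plan is to reduce the lemma directly to the parity symmetry recorded in subsection~\ref{dyn symmetries}, namely that $f(-z)=f(z)$ when $pq$ is even and $f(-z)=-f(z)$ when $pq$ is odd. In either case the map $z\mapsto -z$ conjugates the dynamics of $f$ to itself (or to its negative, which has the same orbit structure about $0$), so the basin of the superattracting fixed point $0$ must be invariant under negation.

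Concretely, I would split into the two parity cases. If $pq$ is even, then $f(-z)=f(z)$, so by induction $f^n(-z)=f^n(z)$ for every $n\geq 1$; hence $f^n(z)\to 0$ forces $f^n(-z)\to 0$ and $-z\in B$. If $pq$ is odd, then $f(-z)=-f(z)$, and a straightforward induction gives $f^n(-z)=-f^n(z)$: the base case is the hypothesis, and if $f^n(-z)=-f^n(z)$ then
\[
f^{n+1}(-z)=f(-f^n(z))=-f(f^n(z))=-f^{n+1}(z).
\]
Since $|f^n(-z)|=|f^n(z)|\to 0$, again $-z\in B$.

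There is essentially no obstacle here, since the lemma is a direct corollary of the algebraic symmetries of $\tan^p z^q$ already established in subsection~\ref{dyn symmetries}; the only mild point to be careful about is keeping track of the orbit when the symmetry is anti-holomorphic (the $pq$ odd case), which is handled by the inductive identity above. The argument does not depend on whether $z$ lies in the immediate basin $B_0$ or in a further component, so the same statement will later specialize to give $-B_0=B_0$ whenever $B_0$ is itself symmetric about $0$.
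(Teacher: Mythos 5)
Your argument is essentially identical to the paper's proof, which states directly that the orbits of $z$ and $-z$ coincide when $pq$ is even and are symmetric about $0$ when $pq$ is odd, so that both or neither lie in $B$. You have simply filled in the one-line induction behind that claim, which is correct and adds no new ideas.
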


\begin{proof} The orbits of $z$ and $-z$ are the same if $pq$ is even or are symmetric with respect to $0$ if $pq$ is odd. Thus either both of them belong to $B$ or neither does.
\end{proof}

\begin{lemma}\label{sym2}
$B_0$ is symmetric with respect to $0$. That is, if $z\in B_0$,  then  $-z \in B_0$ too. 
\end{lemma}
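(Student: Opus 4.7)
The plan is to promote Lemma \ref{sym1} (which only gives $-z \in B$) to the statement that $-z$ lies in the same \emph{component} of $B$ as $z$, namely $B_0$. The tool will be the dynamic symmetry recorded in section~\ref{dyn symmetries}: the involution $\sigma(z)=-z$ intertwines with $f$, so it permutes the components of the Fatou set and in particular permutes the components of $B$.

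Concretely, I would proceed as follows. First, I would observe that $\sigma$ is a holomorphic homeomorphism of $\CC$ fixing the origin. Next, using the relations from section~\ref{dyn symmetries}, I would split into two cases: if $pq$ is even then $f\circ\sigma = f$, and if $pq$ is odd then $f\circ\sigma = \sigma\circ f$. An easy induction on $n$ then gives, in both cases, $f^n(-z) \in \{f^n(z), -f^n(z)\}$, so $f^n(-z)\to 0$ iff $f^n(z)\to 0$; this recovers Lemma~\ref{sym1} and, more importantly, shows that $\sigma(B)=B$ as sets. Since $\sigma$ is a homeomorphism, it sends connected components of $B$ to connected components of $B$, so $\sigma(B_0)$ is itself a component of $B$.

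Finally, because $\sigma(0)=0$ and $0\in B_0$, the component $\sigma(B_0)$ contains the origin; but $B_0$ is by definition the unique component of $B$ containing $0$, which forces $\sigma(B_0)=B_0$. This yields the claim. There is no real obstacle here beyond making sure the commutation with $f$ is stated correctly in each parity case; once that is in hand the topological argument on components is immediate, and no estimate or compactness argument is required.
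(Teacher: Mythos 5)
Your proposal is correct and follows essentially the same route as the paper: use Lemma~\ref{sym1} (or its underlying commutation relations) to see that $z\mapsto -z$ preserves $B$, note that it is a homeomorphism fixing $0$, and conclude that it must map the component $B_0$ containing $0$ to itself. The paper states this more tersely, but the argument is identical.
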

\begin{proof}   Let $-B=\{-z \ | \  z\in B \}$ and $-B_0=\{-z \ | \  z\in B_0\}$. It is obvious that  $0\in -B$ and that $-B=B$ by  lemma \ref{sym1}. Since $B_0$ is a component of  $B$,  $-B_0=B_0$.  \end{proof}

From this lemma, it follows that when $p$ is odd and $f$ has two distinct asymtotic values, then either both   are in $B_0$ or neither is.

 In order to determine when $B_0=B $, we need the following lemma.  
 \begin{lemma}\label{preimageSC}
 Let $U\in \mathbb{C}$ be a simply connected open set.
 \begin{enumerate}

 \item If  $0, v,v' \notin U$, then $f^{-1}(U)$ is a union of infinitely many simply connected open sets, and each component is conformally equivalent to $U$.  Note that if  $v, v'  \not\in \overline{U}$ then even if  $U$ is unbounded, the components of $f^{-1}(U)$ are bounded.
  \item If $0 \notin U$ but either $v \in U$ or $v' \in U$, then $f^{-1}(U)$ is a union of  infinitely many bounded simply connected sets and finitely many   simply connected unbounded sets, each of which is contained in one of  the asymptotic tracts.  If $p$ is even ($v'=v$) there are $2q$ unbounded sets.  If $p$ is odd, there are $q$ such sets,  $v'$ is in $-U$ and $f^{-1}(-U)$ contains another $q$ unbounded sets.  
   \item If $0\in U$ but  $v, v' \notin U$, then  $f^{-1}(U)$ is a union of infinitely many simply connected open sets,   each of which is a branched cover of $U$, branched at a single point.   The order of branching depends on $p,q$: the order of all components not containing $0$ is $p$ and that of the component containing $0$ is $pq$.

  \item If $0,  v,v'  \in U$, then $f^{-1}(U)$ is an unbounded connected set with infinite connectivity.
 \end{enumerate}
\end{lemma}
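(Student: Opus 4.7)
The plan is to treat all four cases through a common framework combining the global covering
$$f\colon \CC\setminus f^{-1}(S(f))\to \chat\setminus S(f)$$
with the local models for $f$ recalled in Section~\ref{cov}: $f$ is locally $pq$-to-one at the origin, locally $p$-to-one at every other zero, and on each of the $2q$ asymptotic tracts $A$ the restriction $f|_A$ is a universal cover onto a punctured neighborhood of the corresponding asymptotic value. The four cases are distinguished by which elements of $\{0,v,v'\}$ lie in $U$, and in each one the corresponding source of monodromy is handled separately.

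For case (1), $U$ is disjoint from $S(f)\cap\CC$, so $f\colon f^{-1}(U)\to U$ is an unramified covering. Since $U$ is simply connected, each component maps homeomorphically, hence conformally, onto $U$; Picard's theorem gives infinitely many such components, and the hypothesis $v,v'\notin\overline{U}$ rules out any component extending to infinity along an asymptotic tract, forcing boundedness. For case (2) I split $f^{-1}(U)$ according to whether a point lies in an asymptotic tract. The covering argument of case (1), applied to $U\setminus\{v\}$ (or $U\setminus\{v'\}$), produces the countably many simply connected non-tract components. In each tract $A$ targeting $v$, the preimage of $U$ equals the preimage of $U\setminus\{v\}$ since $v\notin f(A)$, and because $U\setminus\{v\}$ deformation-retracts to a loop around the puncture, its preimage under the universal cover $f|_A$ is a single unbounded simply connected set. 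Counting tracts ($2q$ when $p$ is even and $v=v'$, or $q$ when $p$ is odd with $v'=-v$) gives the stated totals.

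Case (3) uses that $U^{*}=U\setminus\{0\}$ avoids $S(f)$, so $f$ restricts to an unramified covering of the annulus $U^{*}$. Because $v,v'\notin U$ no asymptotic tract enters $f^{-1}(U)$, hence every component $V$ is bounded and $f\colon V\to U$ is a proper branched cover ramified only over $0$. A Riemann--Hurwitz calculation $\chi(V)=d\,\chi(U)-\sum(e_i-1)$, together with the local degree bound ($pq$ at the origin, $p$ at every other zero), forces $V$ to contain exactly one zero of $f$ and to satisfy $\chi(V)=1$; thus $V$ is simply connected and $f|_V$ has degree $pq$ if that zero is the origin and $p$ otherwise. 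A component with no zero cannot occur, because it would be an unramified cover and hence a homeomorphism onto $U$, contradicting the fact that $0\in U$ must be attained.

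For case (4) the essential new content is that the pieces contributed separately by (2) and (3) are glued into a single connected set of infinite connectivity. I plan to prove connectedness by a path-lifting argument: given $z\in f^{-1}(U)$, choose a path $\gamma\subset U$ from $f(z)$ to the origin that meets $\{0,v,v'\}$ only at its endpoint, lift $\gamma$ through $f$ starting at $z$ using the unramified cover $f^{-1}(U\setminus\{0,v,v'\})\to U\setminus\{0,v,v'\}$, and verify that the endpoint of the lift is a preimage of $0$ in the component of the origin. Infinite connectivity will then follow from the infinitely many poles of $f$, each of which sits in $\CC\setminus f^{-1}(U)$ and produces a distinct complementary component (a neighborhood on which $|f|$ exceeds the distance from $0$ to $\partial U$). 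The main obstacle I expect is the transitivity of the monodromy action of $\pi_1(U\setminus\{0,v,v'\})$ on a fibre: one must combine the cyclic permutation around $0$ (of order $pq$ near the origin and $p$ near each other zero) with the infinite cyclic shifts around $v$ and $v'$ in each asymptotic tract, and verify that together they interchange the sheets naively attached to different zeros and different tracts.
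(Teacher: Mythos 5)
Your plan takes a genuinely different route from the paper's. The paper factors $f=L\circ P\circ T\circ Q$ with $L(z)=\lambda z$, $P(z)=z^p$, $T(z)=\tan z$, $Q(z)=z^q$, and pulls $U$ back one layer at a time; you work directly with the covering $f\colon\CC\setminus f^{-1}(S(f))\to\chat\setminus S(f)$ and the local degree data at zeros and in the asymptotic tracts. For cases (1)--(3) this is sound, and in case (3) it is tighter than the paper's bookkeeping: your Riemann--Hurwitz identity collapses to $\chi(V)=m$ where $m$ is the number of zeros of $f$ in the component $V$, which simultaneously forces $m=1$, $\chi(V)=1$, and $\deg(f|_V)\in\{p,pq\}$ in one stroke, rather than counting components through four factors.

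Case (4), however, contains a genuine gap, and you flag it yourself. Lifting a path from $f(z)$ to $0$ shows only that $z$ is joined inside $f^{-1}(U)$ to \emph{some} zero of $f$; connectedness of $f^{-1}(U)$ requires the monodromy of $f^{-1}(U\setminus\{0,v,v'\})\to U\setminus\{0,v,v'\}$ to act transitively on a fibre, and you do not establish this. The transitivity does hold, but one must say why: because $U$ is a simply connected planar domain containing all three of $0,v,v'$, the inclusion $U\setminus\{0,v,v'\}\hookrightarrow\chat\setminus S(f)$ is surjective on $\pi_1$ (small loops about $0$, $v$, $v'$ already generate the fundamental group of the four-punctured sphere, the loop about $\infty$ being their inverse product), so the restricted monodromy group coincides with the full one, which is transitive since $\CC\setminus f^{-1}(S(f))$ is connected. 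Likewise the claim that "each pole produces a distinct complementary component" is asserted without proof. The paper sidesteps both points at once by passing to the complement: $\chat\setminus U$ contains $\infty$ but none of $0,v,v'$, so by a case-(3) type argument carried out over $\infty$ (each pole having local degree $p$), $f^{-1}(\chat\setminus U)$ is a disjoint union of closed topological disks, one about each pole, and $f^{-1}(U)$ is the complement in $\CC$ of this infinite disjoint family of closed disks -- automatically connected and infinitely connected. You should either insert the $\pi_1$-surjectivity observation or, more economically, replace the monodromy step by the complement argument.
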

\begin{proof}
Write  $f(z)=\lambda \tan^pz^q=L\circ P\circ T\circ Q(z)$, where $L(z)=\lambda z$, $P(z)=z^p$, $T(z)=\tan z$, and $Q(z)=z^q$. Let $U$ be a simply connected open set in $\mathbb{C}$.

Case (1): If  $U$  contains neither  $0$ nor both   $v$ and $v'$, then $P^{-1}\circ L^{-1}(U)=\cup_{i=1}^p V_i$,  where each $V_i$ is a simply connected open set and none of the $V_i$ contains  either $\pm i$ or $0$. Moreover, $L\circ P: V_i\to U $ is conformal. Recall that the map $$T: \mathbb{C} \to \widehat{\mathbb{C}}\setminus \{\pm i\}$$ is a universal covering map. Since each $V_i\subset \mathbb{C}\setminus \{\pm i\}$, it follows that $T^{-1}(V_i)=\cup_{j=1}^\infty W_i^j$, where each $W_i^j$ is a simply connected open set and $T: W_i^j\to V_j$ is also conformal. Moreover, each $W_i^j$ is contained in a vertical strip of width $\pi$:$$H_j=\{z=x+yi \ | \ a_{j-1}\leq x\leq a_j, \text{ and } a_j-a_{j-1}=\pi\}.$$ Note that $0\notin V_i$ because $0\notin W_i^j$.   Therefore for each $W_i^j$, $Q^{-1}(W_{i}^j)=\cup_{k=1}^q U_{i}^{j_k}$, where the $U_i^{j_k}$ are simply connected open sets and  $Q: U_i^{j_k}\to U_i^j$ is also conformal.

Case (2): If $v  $ or $v' $ is in $U$ but $0$ is not, then $P^{-1}\circ L^{-1}(U)=\cup_{i=1}^p V_i$, where  each $V_i$ is a simply connected open set not containing $0$. Moreover, $L\circ P: V_i\to U $ is conformal. As these $q$ sets are permuted by  the rotation $\tau (z)=e^{2\pi i/p}z,$ the asymptotic value of $T$, $\pm i $,  are in different $V_i$'s.  As above,  the map $$T: \mathbb{C} \to \widehat{\mathbb{C}}\setminus \{\pm i  \}$$ is a universal covering map so that if neither $i$ nor $-i$ is in  $V_i$, $T^{-1}(V_i)=\cup_{j=1}^\infty W_i^j$, where each $W_i^j$ is a simply connected open set and $T: W_i^j\to V_j$ is   conformal.  Again each $W_i^j$ is in a vertical strip of width $\pi$, $$H_j=\{z=x+yi \ | \ a_{j-1}\leq x\leq a_j, \text{ and } a_j-a_{j-1}=\pi\}.$$  If    $i$   is in  $V_i$, it has no preimages under $T$ and  $T^{-1}(V_i)$ consists of a single  simply connected unbounded set  $W_i$ and  $T: W_i\to V_i\setminus \{i\}$,  is a universal covering;  similarly if $-i$ is in $V_i$.  Note that $0\notin W_i^j$ or $W_j$ because  $0\notin V_i$. Therefore for each $W =W_i^j \text{ or } W_i$, $Q^{-1}(W)$ is a union of $q$ simply connected open sets.

Case (3): If $0$ is in  $U$ but both  $v$ and $v'$ are not, then $P^{-1}\circ L^{-1} (U)=V$ is a simply connected set containing $0$ but neither $i$ nor $-i$, and  $L\circ Q: V\to U$ is a branched cover of degree $p$. Similarly $T^{-1}(V)$ is also a union of infinitely many simply connected sets in vertical strips, each  of  which contains a preimage of $0$. Then for each component $W$ of $T^{-1}(V)$, either $0\not\in W$ and $Q^{-1}(W)$  is a union of $q$ simply connected sets containing a single non-zero preimage of zero or $0\in W$  and it is one simply connected set.

Case (4): if $0, v,v'  \in U$, then $P^{-1}\circ L^{-1} (U)=V$ is a simply connected set containing $0$ and both $i$ and $-i$.  If $U$ is bounded, the set $V'=\widehat{\mathbb{C}}\setminus V$  is a simply connected unbounded set in $\hat\CC$ that doesn't contain $0, i$ or $-i$.   As in Case (1), we apply the map $T^{-1}$ and $T^{-1}(V')$ is a union of simply connected sets $W_i$ contained in vertical strips, but in this case, $T:W_i \rightarrow V'$ is a branched cover of degree $p$.   If $U$ is unbounded, $V'$  contains  $p$ simply connected components and we apply $T^{-1}$ to each of these.   In either case,  $(Q \circ T)^{-1}(V')$ is a union of infinitely many bounded simply connected sets so that  $f^{-1}(U)$ is a connected set with infinite connectivity.
\end{proof}

\begin{lemma}\label{allin}  The immediate basin 
   $B_0 $ contains an asymptotic value if and only if all the preimages of $0$ are in $B_0$.
\end{lemma}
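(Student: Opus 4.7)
The plan is to build a nested exhaustion of $B_0$ by simply connected sets via B\"ottcher pullback, and read off both directions from Lemma \ref{preimageSC}. Pick a round disk $U\ni 0$ with $f(U)\subset U$, $U\subset B_0$, and $v,v'\notin U$ (possible since $0$ is super-attracting). Set $D_0=U$ and, inductively, let $D_n$ be the component of $f^{-1}(D_{n-1})$ containing $0$. Two preliminary facts will be needed: first, the sets are nested, $D_{n-1}\subset D_n$, which follows from $f(U)\subset U$ by induction; second, $\bigcup_n D_n = B_0$, proved by taking any $z\in B_0$ and a path $\gamma\subset B_0$ from $z$ to $0$, then using uniform attraction $f^n\to 0$ on the compact set $\gamma$ to deduce $\gamma\subset f^{-n}(U)$ for $n$ large, whence $\gamma$ lies in the component $D_n$ containing $0$. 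Finally, since $f(-z)=\pm f(z)$ and $-D_0=D_0$, each $D_n$ is invariant under negation.

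For the direction $(\Leftarrow)$ I argue by contrapositive. Assume $v,v'\notin B_0$, so $v,v'\notin D_n$ for every $n$. Case (3) of Lemma \ref{preimageSC} then applies at every step, giving each $D_n$ simply connected and $f:D_n\to D_{n-1}$ a $pq$-sheeted cover branched only at $0$. Hence $B_0=\bigcup_n D_n$ is simply connected and the B\"ottcher coordinate extends to a conformal isomorphism $\phi:B_0\to\DD$ conjugating $f|_{B_0}$ to $w\mapsto w^{pq}$. Under this conjugacy $0$ is the only preimage of $0$ inside $B_0$, while $f^{-1}(0)=\{0\}\cup\{q_{k,j}\}$ is infinite; so not every preimage of $0$ lies in $B_0$.

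For $(\Rightarrow)$ suppose $v\in B_0$. Lemma \ref{sym2} gives $-v\in B_0$, so $v'\in B_0$ regardless of the parity of $p$, and both asymptotic values lie in $B_0$. Let $N$ be the smallest index with $v\in D_N$; by the negation-symmetry $-D_N=D_N$ we also have $v'\in D_N$. Because $v,v'\notin D_n$ for $n<N$, case (3) of Lemma \ref{preimageSC} gives $D_N$ simply connected. Now case (4) of the same lemma, applied to $D_N$ (which contains all of $0,v,v'$), shows $f^{-1}(D_N)$ is a single connected set, and hence $D_{N+1}=f^{-1}(D_N)$. Since $0\in D_N$, every preimage of $0$ under $f$ lies in $f^{-1}(D_N)=D_{N+1}\subset B_0$, completing the implication.

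The main point requiring care is the exhaustion step $B_0=\bigcup_n D_n$; once that is in hand, both implications follow mechanically from the two cases of Lemma \ref{preimageSC}. The exhaustion uses only the path-connectedness of the open set $B_0$ and uniform convergence of $f^n$ to $0$ on compact subsets of $B_0$, both standard for attracting basins.
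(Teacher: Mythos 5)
Your proof is correct and follows essentially the same route as the paper: exhaust $B_0$ by the nested pullbacks $U_n$ of a small forward-invariant disk about $0$ and read off both implications from cases (3) and (4) of Lemma~\ref{preimageSC}. You simply make explicit the details (nestedness, the exhaustion $B_0=\bigcup_n U_n$, and the symmetry forcing $v$ and $v'$ to enter at the same stage) that the paper leaves implicit.
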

\begin{proof}
Let $D_r(0)\subset B_0$ be a small disk centered at $0$ with radius $r$, such that no point in the orbit of asymptotic values lies on $\partial D_r$. For any $n\geq 1$, let $U_n$ be the component of $f^{-n}(D_r)$ containing $0$. It follows that  $U_{n-1}\subset U_n$ and $B_0=\cup_{n\geq 0}U_n$.  By lemma \ref{preimageSC}, $U_n$ is simply connected and contains no other preimages of $0$ if and only if $v,v' \notin U_{n-1}$.
\end{proof}

\begin{remark}
Lemma~\ref{allin} indicates that when we take successive preimages of a neighborhood of $0$ by an inverse branch of $f$ that fixes $0$,  they are nested and, if for some $n$, $U_n$ contains an asymptotic value, but $U_{n-1}$ does not, then $U_n$ does not contain any non-zero preimages of $0$; note these  are critical points. Therefore if there is no asymptotic value in $B_0$, the B\"ottcher map of  theorem~\ref{Bottchercoor} defined on a neighborhood in $B_0$ extends injectively to the whole of $B_0$ whereas if   $B_0$ does contain an asymptotic value,   the B\"ottcher map extends injectively to the $U_n$ containing that asymptotic value, but not to $U_{n+1}$;   in particular it is defined at the asymptotic value.  We will describe this further in section~\ref{Julia dichotomy}.  \end{remark}

Since  theorem~\ref{Bottchercoor} applies to monic maps, we conjugate $f(z)=f_{\la} (z)=\lambda \tan^p z^q$ to the monic map
\[ h_{\mu}(z)=\mu^{pq} \tan^p (z/\mu)^q=z^{pq}+\cdots \] 
   by the linear map $\sigma(z)=\mu z$, where $\mu=\lambda^{1/(pq-1)}$ is chosen as a $(pq-1)^{st}$ root of $\la$.   Then we have 
\begin{thm}\label{Bott}[The B\"ottcher coordinate for $f_{\la}$] If $U$ is a neighborhood of $0$ in $B_0$, there is a   conformal map, 
$\phi$, defined up to the  choice of  a $(pq-1)^{st}$ root of $\lambda$, such that for $z \in U$,  $\phi(0)=0$ and $\phi(f(z)) = (\phi(z))^{pq}$.
\end{thm}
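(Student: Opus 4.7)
The plan is to reduce the statement to the classical B\"ottcher theorem (Theorem~\ref{Bottchercoor}) by means of a linear conjugacy that puts $f_\lambda$ into monic form. Expanding $\tan w = w + O(w^3)$ near $0$ gives
\[
f_\lambda(z)=\lambda \tan^p z^q = \lambda\, z^{pq}+O(z^{pq+2q}),
\]
so $0$ is indeed a superattracting fixed point of local degree $m=pq\ge 2$ with leading coefficient $a_m=\lambda\ne 0$. Hence Theorem~\ref{Bottchercoor} applies in principle, but we still have to convert $f_\lambda$ to a monic map in order to get a canonical normalization.

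First I would choose $\mu$ to be any $(pq-1)^{\mathrm{st}}$ root of $\lambda$ and consider the linear conjugacy $\sigma(z)=\mu z$. A direct computation shows
\[
h_\mu(z):=\sigma^{-1}\circ f_\lambda\circ\sigma(z)=\mu^{pq}\tan^p(z/\mu)^q = z^{pq}+\cdots,
\]
which is holomorphic in a neighborhood of $0$ and monic of degree $pq$ at $0$. Theorem~\ref{Bottchercoor} (in the monic version stated in the excerpt, following \cite{CG}, Theorem $4.1$) then produces a unique conformal map $\psi$ on some neighborhood $V$ of $0$, with $\psi(0)=0$ and $\lim_{w\to 0}\psi(w)/w=1$, satisfying
\[
\psi\bigl(h_\mu(w)\bigr)=\bigl(\psi(w)\bigr)^{pq}.
\]

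Next I would pull $\psi$ back to the $z$--coordinate. Set $U=\sigma(V)$, which is a neighborhood of $0$ contained in $B_0$ (possibly after shrinking so that $U\subset B_0$), and define
\[
\phi(z):=\psi\bigl(\sigma^{-1}(z)\bigr)\quad \text{for } z\in U.
\]
Then $\phi$ is conformal on $U$, $\phi(0)=0$, and using the conjugacy $h_\mu=\sigma^{-1}\circ f\circ\sigma$ together with the B\"ottcher equation for $\psi$, one checks
\[
\phi\bigl(f(z)\bigr)=\psi\bigl(\sigma^{-1}f(z)\bigr)=\psi\bigl(h_\mu(\sigma^{-1}z)\bigr) =\bigl(\psi(\sigma^{-1}z)\bigr)^{pq}=\bigl(\phi(z)\bigr)^{pq},
\]
which is the functional equation required.

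Finally I would address the indeterminacy. The map $\sigma$ depends on a choice of $\mu=\lambda^{1/(pq-1)}$, and the $pq-1$ possible values of $\mu$ produce $pq-1$ distinct conjugates $h_\mu$; correspondingly one obtains $pq-1$ different B\"ottcher maps $\phi$. Replacing $\mu$ by $\zeta\mu$ with $\zeta^{pq-1}=1$ multiplies $\phi$ by $\zeta^{-1}$, which matches the $(m-1)^{\mathrm{st}}$ root of unity ambiguity in Theorem~\ref{Bottchercoor}. I do not expect any real obstacle here: the content is just the observation that a linear rescaling converts $f_\lambda$ into a monic map so that the classical theorem applies, and that the rescaling ambiguity in $\mu$ is precisely the root-of-unity ambiguity of the B\"ottcher coordinate. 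The subtler point, extending $\phi$ maximally into $B_0$ through the possible presence of the asymptotic values, is deferred to the discussion following Lemma~\ref{allin} and Section~\ref{Julia dichotomy}.
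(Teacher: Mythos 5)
Your strategy---conjugate $f_\lambda$ by a linear map to a monic map, invoke the classical B\"ottcher Theorem~\ref{Bottchercoor}, and pull back---is exactly the paper's, and your observation that the $(pq-1)$-st-root indeterminacy in $\mu$ reproduces the $(m-1)$-st-root-of-unity ambiguity of the B\"ottcher coordinate is the right bookkeeping. However, the ``direct computation'' in your second step is incorrect: with $\sigma(z)=\mu z$ one has
\[
\sigma^{-1}\circ f_\lambda\circ\sigma(z)=\frac{\lambda}{\mu}\tan^p(\mu^q z^q)=\mu^{2pq-2}z^{pq}+\cdots,
\]
which is not $\mu^{pq}\tan^p(z/\mu)^q$ and is not monic, since its leading coefficient $\mu^{2pq-2}=\lambda^2$ is generically different from $1$. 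The monic map $h_\mu(z)=\mu^{pq}\tan^p(z/\mu)^q=z^{pq}+\cdots$ is $\sigma\circ f_\lambda\circ\sigma^{-1}$, not $\sigma^{-1}\circ f_\lambda\circ\sigma$.

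This reverses the direction of your pullback. If $\psi$ is the B\"ottcher coordinate of $h_\mu$, you must set $\phi=\psi\circ\sigma$, as the paper does, not $\psi\circ\sigma^{-1}$: from $\sigma\circ f_\lambda=h_\mu\circ\sigma$ one gets
\[
\phi\bigl(f_\lambda(z)\bigr)=\psi\bigl(\sigma(f_\lambda(z))\bigr)=\psi\bigl(h_\mu(\sigma(z))\bigr)=\bigl(\psi(\sigma(z))\bigr)^{pq}=\bigl(\phi(z)\bigr)^{pq}.
\]
Your functional-equation check was internally consistent with your stated (but false) identity $h_\mu=\sigma^{-1}\circ f_\lambda\circ\sigma$, which is why the slip is easy to miss; the map you would actually be handing to Theorem~\ref{Bottchercoor} is not monic. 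Once the direction of conjugation is reversed (equivalently, take $\sigma(z)=z/\mu$), the argument matches the paper's proof.
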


\begin{proof} By  theorem~\ref{Bottchercoor},  on a small neighborhood $U$ of $0$ there is a map $\phi_h:U \rightarrow \DD$ such that $\phi_h(h(z))=(\phi_h(z))^{pq}$.  Therefore,   $\phi(z)=\phi_{\la}(z)=\phi_h(\sigma(z))$   is the desired map. 
  \end{proof}

Note that the asymptotic values are in the immediate basin of $0$ of $h_\mu$ if and only if the asymptotic values of $f_{\la}$ are in $B_0$ so  the map $\phi$   extends  injectively to  the preimage of the disk with radius $|\phi(v)|$ in $\DD$.

\begin{thm}\label{dichotomy}
The immediate basin of $0$ is completely invariant, that is $B=B_0$,  if and only $B_0$ contains an asymptotic value. \end{thm}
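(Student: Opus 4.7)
The plan is to prove the two directions separately. Necessity ($B=B_0\Rightarrow$ some asymptotic value lies in $B_0$) is almost immediate from the preceding lemmas; sufficiency requires a path-lifting argument that exploits the fact that $v,v'$ have no $f$-preimages.

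For necessity, I argue the contrapositive. If neither $v$ nor $v'$ lies in $B_0$, Lemma~\ref{allin} tells us $B_0$ contains no preimage of $0$ beyond $0$ itself. But $f_\lambda$ has infinitely many other zeros (at $z^q=k\pi$, $k\in\ZZ\setminus\{0\}$), each of which lies in $B\setminus B_0$, so $B\neq B_0$.

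For sufficiency, assume $v\in B_0$. By Lemma~\ref{sym2} together with the symmetry $v'=\pm v$ from Section~\ref{dyn symmetries}, also $v'\in B_0$. I would work with the exhaustion $B_0=\bigcup_n U_n$ introduced in the proof of Lemma~\ref{allin}, and choose the smallest $N$ with $v\in U_N$. For $n\leq N-1$, $U_n$ contains no asymptotic value, so inductive application of Lemma~\ref{preimageSC}(3) yields that $U_N$ is simply connected. Lemma~\ref{preimageSC}(4) then gives that $f^{-1}(U_N)$ is connected; being open, contained in the Fatou set and containing $0$, it lies in the Fatou component $B_0$, so $U_{N+1}=f^{-1}(U_N)\subset B_0$. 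The main step is then to upgrade this to $f^{-1}(B_0)=B_0$.

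To do so I argue by contradiction: suppose $D$ is a component of $f^{-1}(B_0)$ distinct from $B_0$, and pick $w\in D$ (perturbed slightly inside $D$ if needed so that $f(w)\neq 0$). Because $\tan z=\pm i$ has no solutions, $v$ and $v'$ have no $f$-preimages, so $f$ restricts to a covering
\[
f\colon f^{-1}(B_0)\setminus f^{-1}(0)\;\longrightarrow\;B_0\setminus\{0,v,v'\}.
\]
Choose a path $\gamma$ in the (connected) open set $B_0\setminus\{0,v,v'\}$ joining $f(w)$ to some $p\in U_N\setminus\{0,v,v'\}$, and lift it through the covering to $\tilde\gamma$ starting at $w$. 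Continuity forces $\tilde\gamma$ to remain in the single component $D$, so $\tilde\gamma(1)\in D$; but $f(\tilde\gamma(1))=p\in U_N$ forces $\tilde\gamma(1)\in f^{-1}(U_N)=U_{N+1}\subset B_0$, contradicting $D\cap B_0=\emptyset$. Hence $f^{-1}(B_0)=B_0$, and for any $z\in B$ the condition $f^n(z)\to 0$ places $f^n(z)\in B_0$ for $n$ large, so $z\in B_0$ and $B=B_0$. The main obstacle is this path-lifting step; its feasibility rests essentially on the tangent equation $\tan z=\pm i$ having no roots, so that $0$ is the only finite singular value with preimages, all of which already lie in $B_0$ by Lemma~\ref{allin}.
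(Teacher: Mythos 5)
Your necessity argument matches the paper's (it is the contrapositive read off from Lemma~\ref{allin}). For sufficiency you take a genuinely different and more elaborate route: the paper simply invokes Lemma~\ref{allin} to conclude that every preimage of $0$ already lies in $B_0$, and then observes (implicitly) that any component of $f^{-1}(B_0)$ other than $B_0$ would have to contain a preimage of $0$ (the image of such a component is $B_0$ or $B_0$ minus a single asymptotic value, and $0$ is never an asymptotic value in this family), forcing it to meet $B_0$ --- a contradiction. Your covering/path-lifting argument accomplishes the same thing, and it does work, but it is more machinery than the problem needs.

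However, there is a concrete error in the justification of your covering map, and it is one you single out as ``essential.'' You assert that $v$ and $v'$ have no $f$-preimages because $\tan z=\pm i$ has no solutions. This is true only for $p\le 2$. For $p>2$, the equation $f(z)=v$, i.e.\ $\tan^p z^q=i^p$, reduces to $\tan z^q=i\,\omega$ for each $p$-th root of unity $\omega$, and whenever $\omega\ne\pm1$ this has infinitely many solutions. The paper records exactly this in the proof of Lemma~\ref{preimageSC}(2): ``If $p>2$, $v$ and $v'$ are not omitted and $\{f^{-1}(V)\}$ contains infinitely many bounded domains each containing a preimage of $v$ or $v'$.'' Consequently your displayed map
$f\colon f^{-1}(B_0)\setminus f^{-1}(0)\to B_0\setminus\{0,v,v'\}$
is not even well-defined for $p>2$, since the source still contains preimages of $v,v'$. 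The correct statement is that
$f\colon f^{-1}(B_0)\setminus f^{-1}(\{0,v,v'\})\to B_0\setminus\{0,v,v'\}$
is a covering (this is just the restriction of the standard covering $f\colon\CC\setminus f^{-1}(S(f))\to\widehat\CC\setminus S(f)$, since $S(f)=\{0,v,v',\infty\}$). With that fix the rest of your argument goes through: choose $w\in D$ with $f(w)\notin\{0,v,v'\}$, lift the path, and note that removing the countable discrete set $f^{-1}(\{0,v,v'\})$ from the open connected set $D$ does not disconnect it, so the lift remains in $D$. The closing remark that feasibility ``rests essentially on the tangent equation $\tan z=\pm i$ having no roots'' should therefore be discarded; what the argument actually needs is only that $0,v,v'$ exhaust the finite singular values, not that $v,v'$ be omitted.
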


\begin{proof}
By lemma \ref{allin}, if $B_0$ contains an asymptotic value, it also contains  all the preimages of zero  so it  is completely invariant.

On the other hand,  again by  lemma \ref{allin}, if $B_0$ contains no asymptotic value,  then the only preimage of $0$  in $B_0$ is $0$.  Therefore all the other preimages are  in other distinct components of $B$.
\end{proof}

\begin{lemma}\label{simpconn}
If $B_0$ does not contain an asymptotic value,  all the components of $B $ are simply connected. Moreover, if $v' =- v$, they are each in different components.
\end{lemma}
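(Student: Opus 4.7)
The plan is to establish simple connectedness of $B_0$ first, propagate it to every other component of $B$ by iterative pullback, and then use a Riemann-mapping plus symmetry argument for the separation statement.

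For the immediate basin I would use the nested pullback from the remark following Lemma~\ref{allin}: choose a small disk $D_r(0) \subset B_0$ avoiding the forward orbits of $v$ and $v'$, and let $U_n$ denote the component of $f^{-n}(D_r(0))$ containing $0$. Since by hypothesis no $U_n$ contains an asymptotic value, each pullback falls under case~(3) of Lemma~\ref{preimageSC} and is simply connected. Hence $B_0 = \bigcup_n U_n$ is an increasing union of simply connected open sets, so $B_0$ itself is simply connected.

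For any other component $W$ of $B$, let $N \geq 1$ be the minimal integer with $W \subset f^{-N}(B_0)$; such $N$ exists because every orbit in $W$ eventually enters $B_0$. I would induct on $N$. The image $f(W)$ sits in some component $W'$ of $f^{-(N-1)}(B_0)$, which is simply connected by induction. To conclude from Lemma~\ref{preimageSC} that $W$, viewed as a component of $f^{-1}(W')$, is simply connected, I must exclude case~(4), which requires $W'$ to contain $0$ together with both asymptotic values. The main obstacle is precisely this exclusion: if $0 \in W'$ then $W' = B_0$ (since $B_0$ is itself a component of every $f^{-n}(B_0)$ by disjointness of the components of $B$), and then by hypothesis $v, v' \notin B_0$. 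So one of cases (1)--(3) applies and $W$ is simply connected.

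For the second statement, assume $p$ is odd, so $v' = -v$, and suppose for contradiction that $v$ and $v'$ lie in a common component $W$ of $B$. By Lemma~\ref{sym1} the map $\iota(z) = -z$ permutes components of $B$, so $-W$ is also a component and contains $-v = v'$ together with $-v' = v$; disjointness of components forces $W = -W$. The preceding step shows $W$ is simply connected, and $W \neq \CC$ since the Julia set is non-empty, so a Riemann map $\phi \colon W \to \DD$ conjugates $\iota|_W$ to a non-identity holomorphic involution of $\DD$ (non-identity because $\iota$ exchanges $v$ and $v' \neq v$). Every such involution is elliptic and hence has a fixed point in $\DD$; pulling back gives a fixed point of $\iota$ inside $W$, which can only be $0$. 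Therefore $W = B_0$ and $v \in B_0$, contradicting the hypothesis.
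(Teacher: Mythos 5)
Your proof is correct and follows essentially the same route as the paper's: simple connectivity of $B_0$ via the nested pullbacks $U_n$, propagation to the other components of $B$ via Lemma~\ref{preimageSC} with case~(4) excluded, and the symmetry $W=-W$ forcing $0\in W=B_0$ for the separation claim. The only real difference is that where the paper simply asserts that $U=-U$ implies $0\in U$, you justify this step with the Riemann-map/elliptic-involution argument, which is a welcome extra detail rather than a different approach.
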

\begin{proof}
By  lemma \ref{allin}, $B_0=\cup_{n\geq 1} U_n$. Since  $B_0$ doesn't contain an asymptotic value, it follows that the B\"ottcher map extends to all of $B_0$ and so it is simply connected. Applying  lemma \ref{preimageSC}, we know any preimage, $f^{-n}(B_0)$, is also simply connected.  

Suppose both $v' = -v$ and $v \in U$, where $U$ is some preimage of $B_0$.  Then by lemma~\ref{sym1}, $ -U$ is also a preimage of $B_0$ and it contains $v'$ and $v$.  Therefore $U \cap -U \neq \emptyset$  and   $0 \in U$ so that $U=B_0$.   \end{proof}

\begin{figure}[htb!]
\begin{center}
\includegraphics[height=3in]{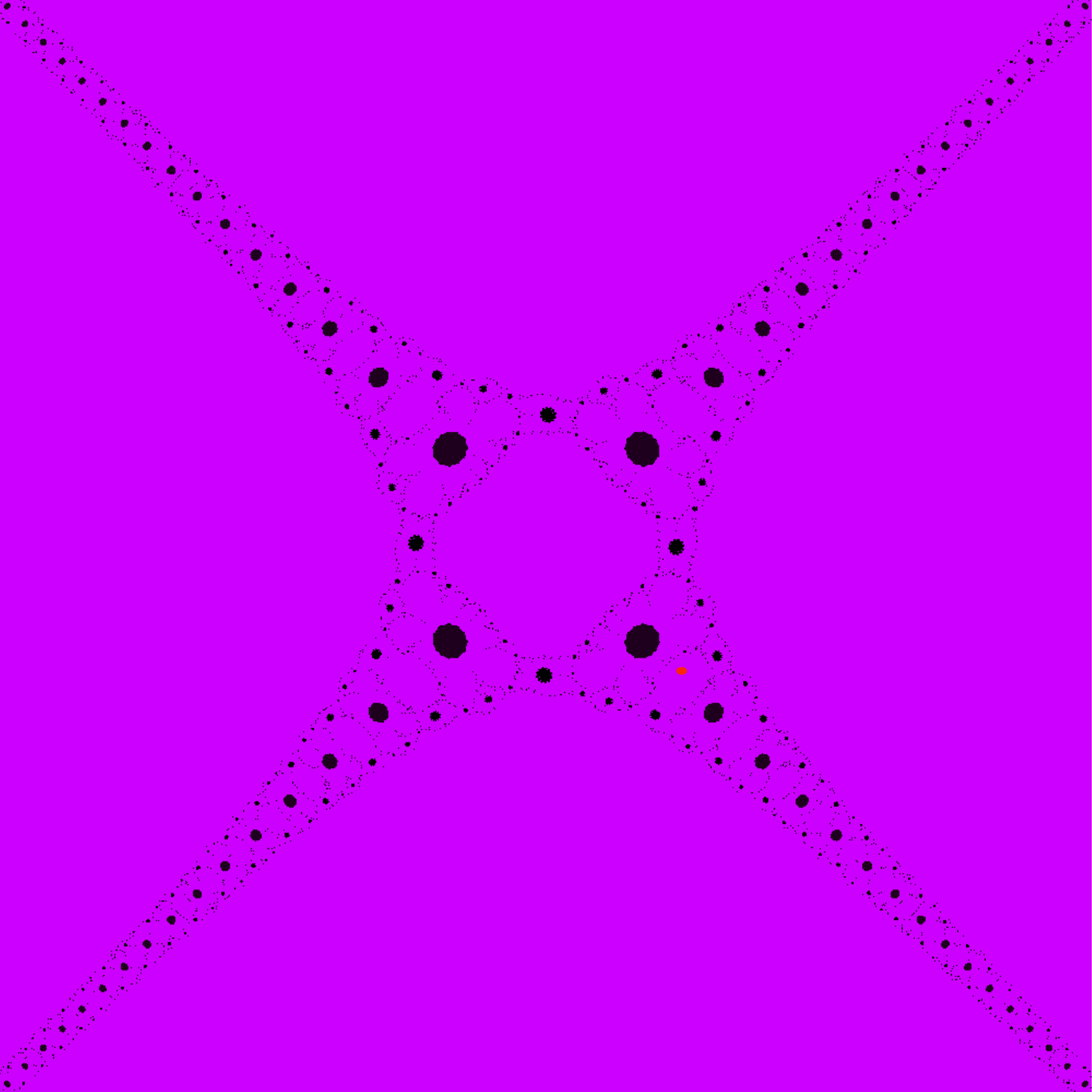}

\end{center}
\caption{The dynamic plane for $(1.22+1.3i) \tan^3 z^2$. The asymptotic value lands in the immediate basin of zero after one iteration. The magenta regions are the basin of $0$.   The  black dots are the prepoles that accumulate to form the boundaries of the components of the basin. The asymptotic value is in the component between the first and second poles in the fourth quadrant.  }
\label{per1 dyn}
\end{figure}

 \begin{figure}[htb!]
\begin{center}
\includegraphics[height=3in]{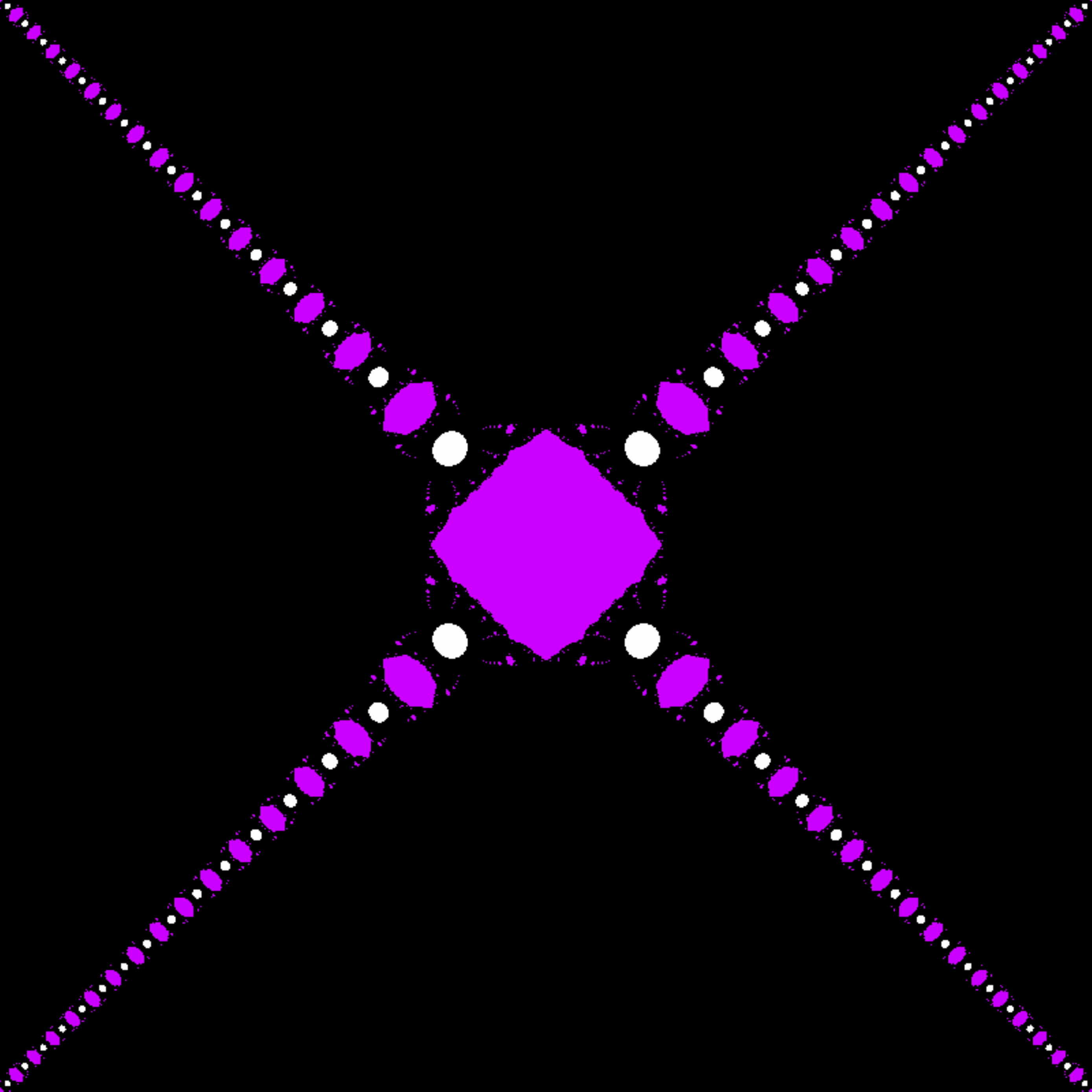}

\end{center}
\caption{ The dynamic plane of $2i \tan^3 z^2$ which has a non-zero attracting fixed point. The black region is the basin of a period $1$ cycle. The magenta regions are the basin of $0$. The white dots surround the poles}
\label{per1 central}
\end{figure}

\begin{thm}\label{ubddcomps}  If the asymptotic values belong to the Fatou set or if they are accessible boundary points of a Fatou component  there are unbounded components of the Fatou set.    
\end{thm}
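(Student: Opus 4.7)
The plan is to exploit the description from section~\ref{cov}: each asymptotic value $v$ of $f_\lambda$ is a logarithmic singularity and has unbounded asymptotic tracts on which $f_\lambda$ is a universal covering onto a punctured neighborhood of $v$. Pulling back stable behavior through such a tract will produce a connected unbounded subset of the Fatou set, which must lie in an unbounded Fatou component.

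Suppose first that $v\in F(f_\lambda)$, and let $U$ be the Fatou component containing $v$. Since $v=i^p\lambda\neq 0$, pick an open disk $D$ around $v$ with $\overline{D}\subset U$ and $0\notin\overline{D}$. Lemma~\ref{preimageSC}(2) applies: $f^{-1}_\lambda(D)$ has finitely many unbounded simply connected components, one inside each asymptotic tract of $v$. For any such component $W$, $f_\lambda(W)=D\setminus\{v\}\subset F(f_\lambda)$, and taking preimages under $f_\lambda$ preserves normality, so $W\subset F(f_\lambda)$. The Fatou component containing $W$ is therefore unbounded.

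Now suppose $v$ is an accessible boundary point of a Fatou component $U$ via a curve $\gamma:[0,1)\to U$ with $\gamma(t)\to v$ as $t\to 1$. Choose a small disk $D$ around $v$ with $0\notin\overline{D}$ and choose $t_0<1$ so that $\gamma([t_0,1))\subset U\cap(D\setminus\{v\})$. Let $T$ be an asymptotic tract of $v$ and pick $z_0\in T$ with $f_\lambda(z_0)=\gamma(t_0)$. Because $f_\lambda:T\cap f_\lambda^{-1}(D\setminus\{v\})\to D\setminus\{v\}$ is a universal covering, $\gamma|_{[t_0,1)}$ lifts uniquely to a continuous curve $\widetilde{\gamma}:[t_0,1)\to T$ starting at $z_0$. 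Using the standard local normal form $f_\lambda\sim v+e^{-w}$ at the logarithmic singularity, one sees that $|\widetilde{\gamma}(t)|\to\infty$ as $t\to 1$: otherwise $\widetilde{\gamma}$ would subconverge to a finite point of $T$, but $f_\lambda$ is continuous there and would then send the limit to $v$, contradicting the covering property. Since $\gamma([t_0,1))\subset F(f_\lambda)$, the lift satisfies $\widetilde{\gamma}([t_0,1))\subset F(f_\lambda)$, and this connected, unbounded subset of the Fatou set lies in a single unbounded Fatou component.

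The main obstacle is ensuring that the lifted curve actually escapes to infinity rather than accumulating inside the tract; this is handled by the logarithmic-singularity normal form cited above. A minor subtlety is that the lift need not remain in any single preimage component of $U$, but this does not matter: we only use that $\widetilde{\gamma}$ stays in $F(f_\lambda)$, and connectedness places it in one Fatou component, necessarily unbounded.
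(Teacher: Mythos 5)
Your proposal is correct, and for the accessible-boundary case it is a cleaner argument than the paper's. When $v\in F(f_\lambda)$ your reasoning matches the paper's in substance: apply Lemma~\ref{preimageSC} to a small disk around $v$, obtain unbounded preimage components in the asymptotic tracts, and conclude they lie in the Fatou set by complete invariance. When $v$ is an accessible boundary point, the paper first invokes the $v \leftrightarrow -v$ symmetry and then splits into sub-cases according to whether $U$ equals $B_0$, a bounded preimage $f_\lambda^{-n}(B_0)$, or a component of the basin of a Siegel disk, inspecting $f_\lambda^{-1}(\gamma)$ in each; you instead lift a single access curve through the universal covering $f_\lambda\colon T\to D\setminus\{v\}$ and argue the lift escapes, which works uniformly and bypasses that case analysis. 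One small caution: the parenthetical justification ``otherwise $\widetilde{\gamma}$ would subconverge to a finite point of $T$'' rules out accumulation only in the open tract $T$, not on $\partial T$. Accumulation on $\partial T$ is excluded because, for $D$ small, the finite boundary of $T$ maps into $\partial D$ (or consists of poles, which map to $\infty$) and is disjoint from the preimages of $v$ (which for $p>2$ lie in bounded components of $f_\lambda^{-1}(D)$ whose closures are disjoint from $T$); the logarithmic normal form $f_\lambda\sim v+e^{-w}$ you cite handles this at once, since $f_\lambda\to v$ forces $w\to\infty$, but it is worth saying explicitly that the boundary case is what the normal form settles. Finally, note that the paper's more elaborate argument extracts the stronger conclusion that each asymptotic tract meets infinitely many unbounded Fatou components, which is reused later (e.g.\ in Proposition~\ref{mptsbdy}); your version establishes precisely the stated theorem and no more.
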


 \begin{figure}[htb!]
\begin{center}
\includegraphics[height=4in]{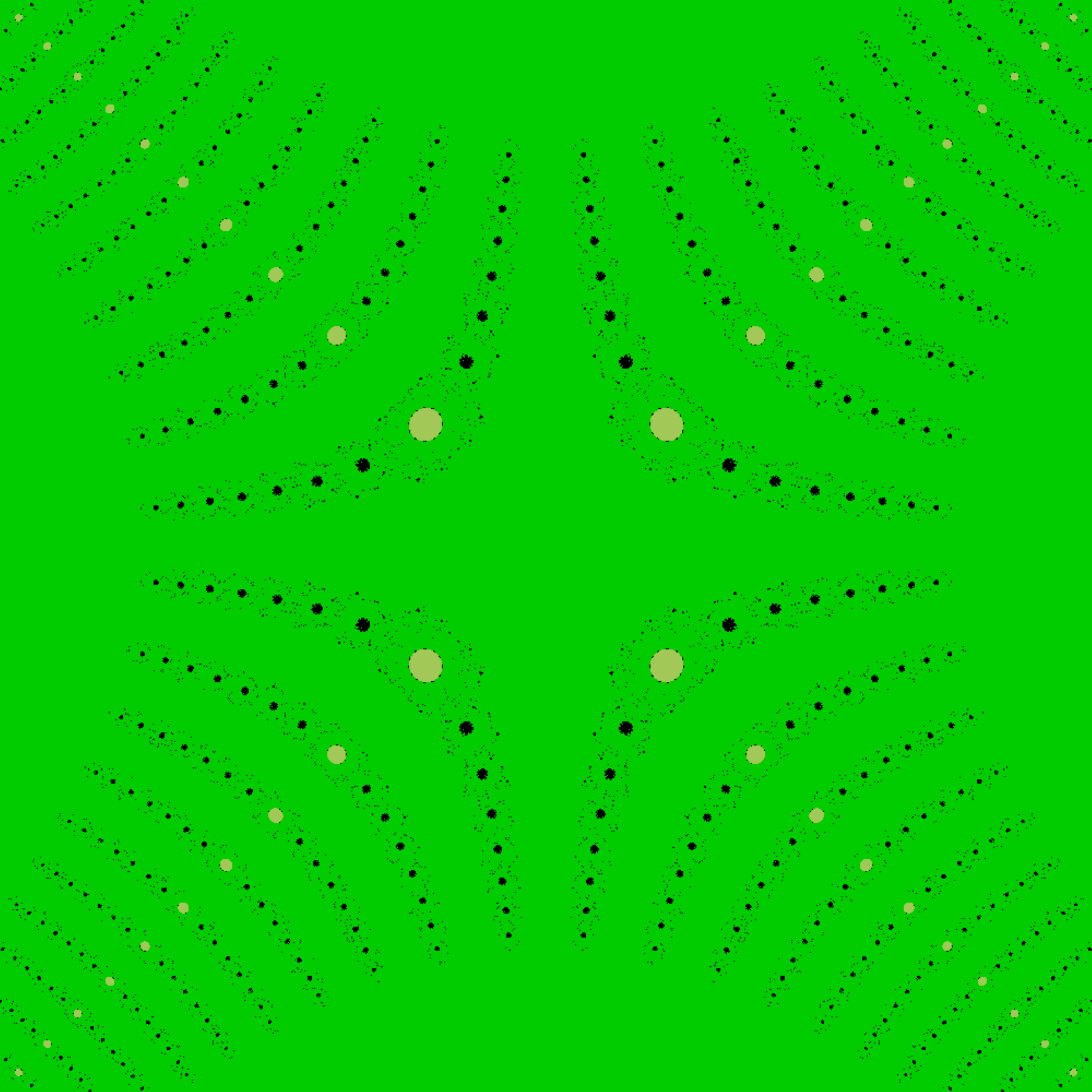}

\end{center}
\caption{The dynamic plane for $f(z)=\sqrt{\frac{\pi}{4} }\tan^3 z^2$. The asymptotic value $v$  is $-\sqrt{\pi/4}i$, $f^3(v)=f^2(v)$ and $v$ is on the boundary of the immediate basin of zero. The full basin contains infinitely many components, both bounded and unbounded.  Roundoff error has truncated the prepoles in the boundary that tend to infinity; in fact, the hyperbola-like curves containing prepoles around which are bounded domains extend to infinity separating the asymptotic tracts into infinitely many unbounded domains. 
}
 
\label{M-pt dyn}
\end{figure}

 \begin{figure}[htb!]
\begin{center}
\includegraphics[height=4in]{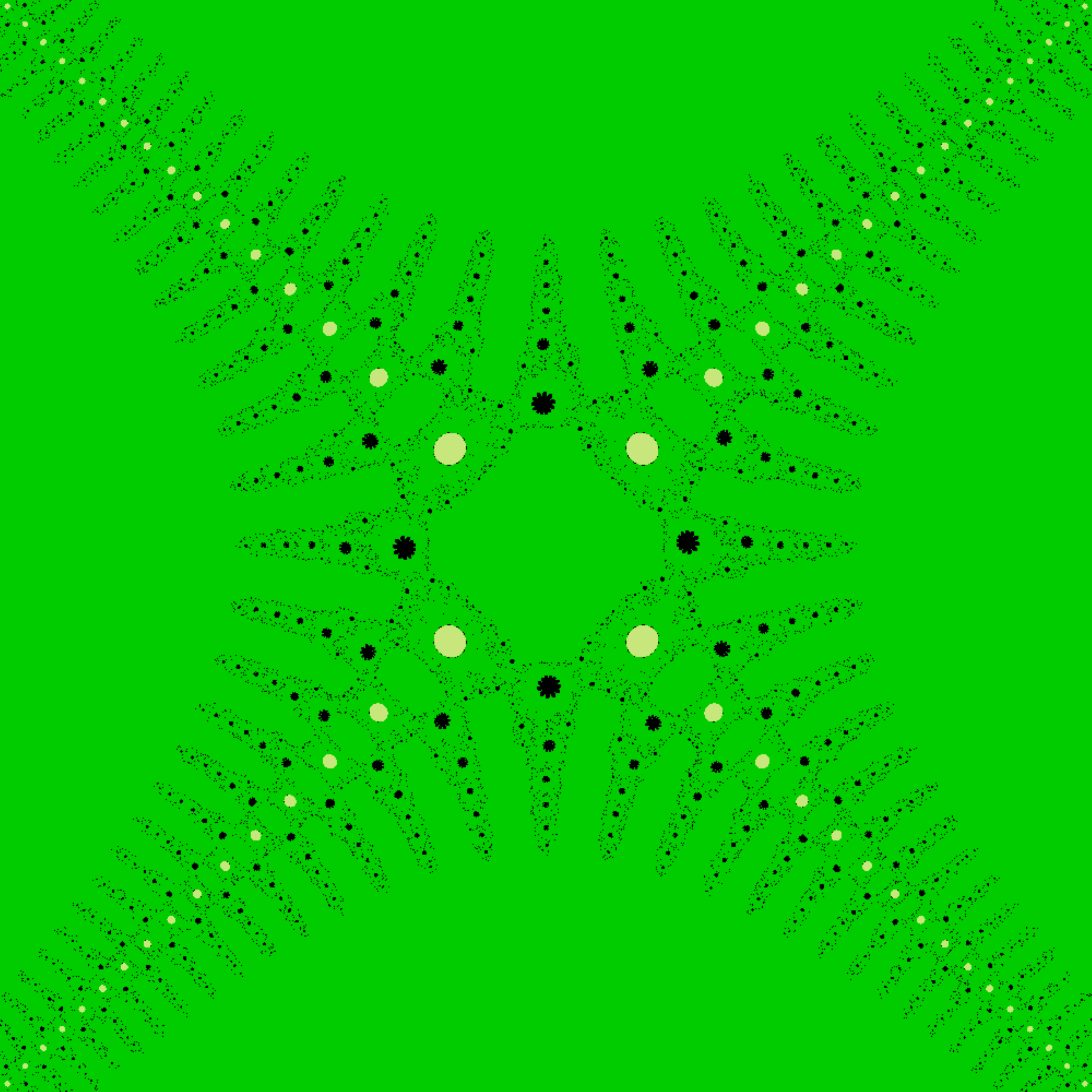}

\end{center}
\caption{The dynamic plane for $f(z)=\la \tanh^3z^2$ where $\la=1.067098-1.135274i$.   The asymptotic value $v$  is on the boundary of a preimage $B_1=f^{-1}(B_0)$ in the fourth quadrant.  Again the full basin contains infinitely many bounded and unbounded components and divides the asymptotic tracts into infinitely many regions.  
 }
\label{M-ptnoncent dyn}
\end{figure}

\begin{proof}   Let $V$ be a punctured neighborhood of the asymptotic value $v$ so that $f:  f^{-1}(V) \rightarrow V$ is  a regular covering and $\{ f^{-1}(V) \}$ contains $q$ or $2q$, as $p$ is odd or even,  unbounded simply connected components, each  contained in an asymptotic tract for $v$. If $p$ is odd, $v'=-v$, and $\{f^{-1}(-V)\}$ also has $q$ unbounded simply connected components.  

If $p=1$ or $2$, $v$ and $v'$ are omitted; then  these $2q$ unbounded components are the only components of $\{f^{-1}(V)\}$.    If $p>2$, $v$ and $v'$ are not omitted and $\{f^{-1}(V)\}$ contains infinitely many bounded domains each containing a preimage of $v$ or $v'$.  Since $V$ contains no critical value, $f$ is a regular covering of  each component of $\{f^{-1}(V)\}$ onto its image.

 Note that if $v$ is in the Fatou set, the neighborhood $V$ can be taken small enough that it, and hence all its preimages,   belong to the Fatou set.  Suppose this is the case and that  $B_0$ is not completely invariant.  Let $U$ be the component of the Fatou set  containing $V$;  by Cases  (1), (2) and (3) of  Lemma \ref{preimageSC},  $\{f^{-1}(U) \}$ consists of $2q$  unbounded components contained in the asymptotic tracts and infinitely many bounded components.  They are all  simply connected and so each unbounded component is   contained a single asymptotic tract.   
 
In figure~\ref{per1 dyn} the asymptotic value is contained in $B$ and in figure~\ref{per1 central} the asymptotic value is contained in the basin of a non-zero attracting point.

Now suppose  $v$ is not in the Fatou set but is an accessible boundary point of a component  $U$ of the Fatou set, then $-v$ is also accessible.  If $U=B_0$,   let $\gamma(t)$ be a symmetric curve in $B_0$ through $0$ that lands at both $v$ and $-v$.   If $U \neq B_0$, then $v$ and $-v$ are in disjoint symmetric components $U$ and $-U$ and for readability we work only with $U$. There are two possibilites:  either $U=f^{-n}(B_0)$ for some $n>0$ and some branch of the inverse, and  it is bounded contains a preimage of $0$; 
 or $U$ is in the basin of a periodic Siegel disk and it contains a preimage of the periodic point (or a periodic point) in the disk.     In either case we let $\gamma$ be a curve that starts at this preimage in $U$ and lands at $v$.  

Then $f^{-1}(\gamma(t))$ contains $\cup^{\infty}_{j=-\infty} \cup_{i=1}^m, \delta_i^j(t)$ where $m=q$ or $m=2q$ and $\{\delta^j_i\}$ is a collection of unbounded curves contained in the $i^{th}$ asymptotic tract for $V$. 
If $U=B_0$, $\delta_i^j$ and $\delta_{i+1}^j$ meet at a preimage of $0$ but if $U \neq B_0$, they are disjoint.  In either case, we see that each asymptotic tract intersects infinitely many components of the Fatou set.  

 If $p>2$, $f^{-1}(\gamma(t))$ also contains infinitely many bounded curves limiting on the preimages of $v$.    Note that since $v$ is a boundary point of $U$, if $N$ is a neighborhood of $\infty$,  each of the infinitely many components $W_j=f^{-1}(U)   \cap N $ is only  a subset of the asymptotic tracts and $f^{-1}(U)$ consists of infinitely many unbounded components. 
 
Figures~\ref{M-pt dyn} and~\ref{M-ptnoncent dyn}   show examples where the asymptotic value is respectively on the boundary of $B_0$ or the boundary of  $U=f^{-1}(B_0)$ for a branch of the inverse.   The bounded Fatou components extend all the way to infinity in the directions indicated.  This doesn't show because of round off error.

\end{proof}

\begin{remark}  Recall that if the asymptotic values belong to the Fatou set, either they are attracted to zero, and thus belong to $B$, or they are in attracting basin of, or accumulate on the boundary of a non-zero periodic cycle.
\end{remark}

 We also have 
 
 \begin{prop}\label{infaccess} If $V$ is an unbounded component of the Fatou set of  $f_{\la}$,  and $f(V)$ contains an asymptotic value, then infinity and all the prepoles are accessible points from inside the Fatou set. 
  \end{prop}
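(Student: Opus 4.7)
The plan is to use the hypothesis to identify an asymptotic tract of $v$ in which the Fatou set has an unbounded universal-cover preimage, construct an access curve to $\infty$ there, and then lift locally through $f$ to propagate accessibility to every prepole.

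For the first step, I note that $v \in f(V) \subset F(f)$, so $v$ lies in some Fatou component $W$, and since $v \neq 0$ I can pick a small simply connected neighborhood $U \subset W$ of $v$ disjoint from $\{0\}$. Lemma~\ref{preimageSC}(2) then gives that $f^{-1}(U)$ splits into infinitely many bounded components together with $q$ or $2q$ unbounded simply connected components lying in the asymptotic tracts of $v$, on each of which $f$ restricts to a universal covering onto $U \setminus \{v\}$. By complete invariance of $F(f)$, these unbounded components lie in Fatou components; using that $V$ itself is unbounded with $v \in f(V)$, I would argue that at least one of them, call it $V_\infty$, lies inside $V$ (roughly: $V$ must extend to infinity along a tract of $v$, and its tail there coincides with such a component). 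Taking any simple arc $\gamma \subset U$ ending at $v$ and lifting it through $f|_{V_\infty}$, the logarithmic singularity property of $v$ forces the lift to terminate at $\infty$, yielding an access curve to $\infty$ inside $V$.

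Next I would show that every pole of $f$ is accessible. Near a pole $z_0$, the function $f$ is locally an unbranched $p$-to-one cover from a punctured neighborhood of $z_0$ onto a neighborhood of $\infty$, so applying any local branch of the inverse to the tail of the access curve constructed above (which eventually lies in any prescribed neighborhood of $\infty$) produces a curve ending at $z_0$. This curve lies in the Fatou set by the identity $f^{-1}(F)=F$, hence in a single Fatou component for which $z_0$ is an accessible boundary point. The same local-lifting argument then works inductively for every prepole of higher order: if $f^n(q) = \infty$ with $n \geq 2$, then $f(q)$ is a prepole of order $n-1$ and is accessible from $F$ by induction; since the only finite critical value of $f$ is $0$ and $0$ is a fixed point (so it cannot be a prepole), $q$ is not a critical point of $f$, hence $f$ is a local homeomorphism at $q$ and the access curve to $f(q)$ lifts locally to one ending at $q$.

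The main technical obstacle will be in the first step, specifically justifying that $V$ contains one of the unbounded components of $f^{-1}(U)$; this requires combining the hypothesis $v \in f(V)$ with the asymptotic-tract structure from Lemma~\ref{preimageSC} to conclude that $V$ genuinely reaches infinity along a tract of $v$. Once this is in hand, the remaining steps are routine applications of complete invariance of the Fatou set and local branches of $f^{-1}$.
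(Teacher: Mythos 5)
Your proposal is correct and follows essentially the same approach as the paper: establish that $V$ contains an asymptotic path giving access to $\infty$, then pull this access curve back through local branches of $f^{-1}$ to reach the poles and, inductively, all higher-order prepoles (using that no prepole of order $\geq 2$ can be a critical point). The step you flag as the main technical obstacle --- showing $V$ actually contains the tail of an asymptotic tract --- is precisely the step the paper dispatches in a single sentence (``Since $f(V)$ contains an asymptotic value, $V$ must intersect at least one of the asymptotic tracts''), so your hesitation there is fair, but it does not take you off the paper's route, and the remaining lifting steps you spell out are the ones the paper leaves implicit.
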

 \begin{proof}  
      Since $f(V)$ contains an asymptotic value,  $V$   must intersect at least one of the asymptotic tracts.  It follows that an asymptotic path in the tract maps to a path landing at an asymptotic value.  The asymptotic path gives access to infinity.   The asymptotic paths pull back to paths that give access to the prepoles.  
  \end{proof}

 \begin{remark}  Note that this does not imply that all  points of $J$ are accessible.  The prepoles are only a countable dense subset of $J$.
 \end{remark}

 We can now complete the classification of Fatou components for functions in $\calf$. \footnote{This theorem is proved for the family $ \la \tan z^2$ in \cite{Nandi}}
\begin{thm}\label{NOHR}
No  map $f_\lambda$ can have a  Herman ring.
\end{thm}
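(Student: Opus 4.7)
My plan is to split the proof according to Theorem~\ref{dichotomy}. If $B_0$ contains an asymptotic value, then by that theorem $B=B_0$ is completely invariant, and by Theorem~A (the Cantor Julia case treated earlier in this section) the Julia set of $f_\lambda$ is a Cantor set. Hence the Fatou set consists of the single, infinitely connected component $B_0$. Since $B_0$ is the basin of the superattracting fixed point $0$, every orbit in it converges to $0$, which is incompatible with the irrational rotation dynamics characteristic of a Herman ring; thus $f_\lambda$ has no Herman ring in this case.

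In the complementary case, where $B_0$ contains no asymptotic value, Lemma~\ref{simpconn} shows that every component of $B$ is simply connected, so no Herman ring can lie in $B$. Suppose for contradiction that $\{H_0,\ldots,H_{m-1}\}$ is a Herman ring cycle disjoint from $B$. I would then invoke the standard Fatou--Shishikura type accumulation theorem, adapted to our singularly finite transcendental meromorphic setting, which asserts that each of the $2m$ boundary components of the cycle is a Jordan curve contained in the closure of the forward orbit of some singular value. Since $S(f_\lambda)=\{0,v,v'\}$ (together with $\infty$ if $p\geq 2$, which has no meaningful forward orbit as an essential singularity) and $\omega(0)=\{0\}$ cannot be dense in any Jordan curve, each boundary component must be accumulated by the orbit of $v$ or of $v'$.

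I would then systematically exclude the possible dynamical locations of $v$ and $v'$: orbits lying in $B$ accumulate only at $\{0\}$ and cannot fill a Jordan curve; an orbit confined to some $H_j$ rotates irrationally on a single invariant circle in the interior of $H_j$, whose closure is disjoint from $\partial H_j$ and therefore cannot contribute to boundary accumulation; and the remaining case, $v$ or $v'$ in the Julia set, is ruled out by combining the $2q$-fold symmetry of \S\ref{dyn symmetries} with the covering structure of Lemma~\ref{preimageSC}. In particular, the preimage of any small annular neighborhood of $\partial H_j$ must contain unbounded components inside the asymptotic tracts, which together with the requirement that $\partial H_j$ be $f^m$-invariant Jordan curves forces an inconsistency with the periodic structure of the ring cycle.

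The main obstacle is executing this final geometric step cleanly: showing that the at-most-two asymptotic value orbits in $J(f_\lambda)$ cannot simultaneously accumulate on the $2m$ distinct $f^m$-invariant Jordan curves bounding the Herman ring cycle. The key ingredient will be exploiting the interplay between the $2q$-fold symmetry of the dynamics, the location of the asymptotic tracts, and the covering structure in Lemma~\ref{preimageSC}, to force a topological contradiction between the bounded, rotational structure of the Herman ring boundaries and the necessarily unbounded preimages that arise through the asymptotic tracts.
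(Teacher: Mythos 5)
Your first case (asymptotic value in $B_0$) is fine: if $B_0$ is completely invariant the Fatou set is a single component which is an attracting basin, so no rotation domain can exist. Your reduction via Lemma~\ref{simpconn} to Herman ring cycles disjoint from $B$ is also sound.

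The remainder, however, contains a genuine gap in two ways. First, the ``Fatou--Shishikura type accumulation theorem'' you invoke is mis-stated. The correct statement (Ma\~n\'e-type, and its transcendental analogues) is that the boundary of a rotation domain is contained in the closure of the post-singular set $\overline{PS(f)}$; it does \emph{not} assert that the boundary components are Jordan curves. Herman ring and Siegel disk boundaries need not be Jordan curves in general, so building your case analysis on that claim would make the argument unsound even if the rest were filled in. Second, and more fundamentally, you never complete the key step: you explicitly write that ``the main obstacle is executing this final geometric step cleanly,'' and the content you offer for the case $v,v' \in J(f_\lambda)$ is a gesture at what ``the interplay between the $2q$-fold symmetry \ldots and the covering structure'' should yield, not an argument. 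As written, the proposal is a plan with an acknowledged hole exactly where the difficulty lies.

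The paper's proof does not go through any postsingular-accumulation statement at all. Instead it picks an invariant topological circle $\gamma \subset A$ (the closure of an orbit of $f^k$ in a Herman ring component $A$), forms the bounding disks $D_i = D(\gamma_i)$ with $\gamma_i = f^{i-1}(\gamma)$, and then splits into cases according to whether the asymptotic values lie outside all the $D_i$, inside exactly one, or both inside one of them. In each case it uses Lemma~\ref{preimageSC} to describe $f^{-1}(\gamma_i)$ or $f^{-1}(\hat\CC \setminus \overline{D_i})$, and derives either a normal-family contradiction on $D_i$ (which contains a boundary circle of the Herman ring, so the iterates cannot be normal there), or a contradiction with $f^k$ being conjugate to a rotation by showing $f^k$ would have to send an inner sub-annulus to an outer sub-annulus. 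This is an elementary topological/covering argument that avoids the Ma\~n\'e-type theorem entirely. If you want to salvage your approach, you would need to (a) cite the accumulation theorem correctly, dropping the Jordan-curve claim, and (b) actually supply the geometric argument for the Julia-set case; at that point you would likely find yourself reconstructing something close to the paper's disk-and-preimage analysis anyway.
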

 
 \begin{proof} This theorem was proved in \cite{KK1} for the family $\la\tan z$ and there are two proofs for the family of functions with exactly two finite asymptotic values and one attracting  fixed point of constant (non-zero) multiplier in \cite{CJK2}. Our family reduces to the first if $pq=1$ and is similar to the second, although the asymptotic values agree or are symmetric and the constant multiplier is zero.   
 
 Suppose $f$ has a periodic cycle of Herman rings.  Let $A$ be one of the components of the cycle; then for some $k$,  $f^k(A)=A$.  Choose $z\in A$ and set $\gamma=\overline{\{f^{kn} (z)\}_{n=0}^{\infty}}$;  $\gamma$ is a topological circle that  bounds a topological disk $D(\gamma)$ containing the inner boundary of $A$; moreover $\gamma$ is forward invariant under $f^k$.  Let $\gamma_1=\gamma$ and for $i=1, \ldots, k$,  let $\gamma_i=f^{i-1}(\gamma)$; let $ D(\gamma_i)$ the disk bounded by $\gamma_i$.

The curves $\gamma_{i}$ separate  the annuli $A_i=f^{i}(A)$ into two annuli, an inner annulus $I_{i}$ contained in $D(\gamma_{i})$ and an outer annulus $O_{i}$, its complement.  
Since $A$ is part of a cycle of Herman rings $f: f^{N-1}(A) \rightarrow f^{N}(A)$ is a homeomorphism.  Moreover,  $f^k$ is conjugate to a rotation on $A$ and is an orientation preserving homeomorphism.

We now separate our considerations depending on how the asymptotic value(s) are situated with respect to the disks $D_i=D(\gamma_{i})$. 
\begin{itemize}
\item Suppose first that $v$ and $v'$ are outside all of the disks  $D_i$.   It follows that  for each $i$, $\{f^{-1}(  \gamma_i) \}$ consists of infinitely many bounded closed curves, one of which is $\gamma_{i-1}$.  Suppose $g_i: \gamma_i \rightarrow \gamma_{i-1}$ is this branch of  $f^{-1}$.  Then $g_i :D_i \rightarrow D_{i-1}$ and therefore, going forward,  $f: D_{i-1} \rightarrow D_i$.  This means that $f^n$ forms a normal family on the $D_i$.  This is a contradiction because  each $D_i$ contains the inner boundary component of $A_i$.

\item Now assume  $v \in D_i$ for some fixed $i$.    It follows  each  closedcomponent of $\{ f^{-1}(\gamma_i)\}$ is either an unbounded arc or a simply connected closed curve.   If $p=$ or $2$ these are unbounded curves are the only components of the inverse; if $p>2$,  all other components are bounded.   In the first case, there cannot be a Herman ring because the maps along the ring are all injective.   In the second, one of the bounded components is a $\gamma_i$ belonging to an annulus of the ring and the argument of the previous bullet applies. 

\item Finally assume that both asymptotic values belong to some $D_i$ so that 
  there are no asymptotic values in $\hat{\CC} \setminus \overline{D_i}$.  Then  $\{ f^{-1}(\hat{\CC} \setminus \overline{D_i}   \}$ consists of infinitely many pairwise disjoint bounded punctured topological disks $U_{m} \setminus \{p_{m} \}$ where the  $p_{m}$ are poles.  One of these, say $U_i$ intersects the inner annulus $I_i$.  It follows that $f:  I_i \rightarrow  O_{i+1}$.  
 
Since there are at most two asymptotic values,  if all the disks $D_j$, 
  $j= 1, \ldots k$, are mutually disjoint,  there are no asymptotic values either inside $D_j$, $j \neq i$ or in their common exterior.    It follows that for all $j \neq i$, $f:   I_j \rightarrow  I_{j+1}$,  and therefore 
  $f^k: I_i \rightarrow  O_i$.  This is a contradiction since $f$ is conjugate to a rotation on $A_i$.  
  
  Because the $\gamma_j$ are disjoint, if some of  the $D_j$ intersect,  they   form a nest.   Let $I_{j_1}$ be the innermost inner annulus and $O_{j_2}$  the outermost outer annulus.   Then the ring $R$ between $\gamma_{j_1}$ and $\gamma_{j_2}$ contains no asymptotic values and $f^{-k}$ maps $R$ homeomorphically to itself, preserving the inner and outer boundaries.   Now we argue as above to conclude that the $f^{nk}$ form a normal family on $R$.  This implies $j_1=j_2$,  the nest contains only $D_i$.  We saw above that in this case the map cannot be  conjugate to a rotation.  This contradiction completes the proof that are no Herman rings.  
  
  \end{itemize} 

\end{proof}

A corollary of the above discussion is 

\begin{cor}\label{Jconn}
If $B_0$ is not completely invariant, the Julia set is connected.  
\end{cor}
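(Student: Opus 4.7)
The plan is to deduce connectivity of $J$ from simple connectivity of every Fatou component via a standard separation argument on $\hat{\CC}$. Since $B_0$ is assumed not completely invariant, Theorem~\ref{dichotomy} tells us $B_0$ contains no asymptotic value, so by Lemma~\ref{simpconn} every component of $B$ is simply connected in $\hat{\CC}$. The remaining Fatou components, in light of Theorems~\ref{NoWD}, \ref{NoBD}, and \ref{NOHR}, can only be Siegel disks (simply connected by definition) or (super)attracting or parabolic basin components of nonzero cycles; simple connectivity of the latter follows by pulling back a small disk in a fundamental domain of the cycle via Lemma~\ref{preimageSC}, since once the forward orbit of each singular value under control, no asymptotic value and no nonremovable branching obstructs the extension. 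This is precisely the content of Theorem B, which we take as established.

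Assume toward contradiction that $J$ is disconnected, so $J = J_1 \sqcup J_2$ with $J_1, J_2$ nonempty, disjoint, compact subsets of $\hat{\CC}$. Then $d(J_1,J_2) > 0$, and one can produce a simple closed curve $\gamma$ lying in the Fatou set $F = \hat{\CC}\setminus J$ that separates $J_1$ from $J_2$ in $\hat{\CC}$: take a small open neighborhood $N$ of $J_1$ whose closure is disjoint from $J_2$ and whose boundary avoids $J$ (such $N$ exists by a standard smoothing of an $\epsilon$-neighborhood, since $F$ is open and contains points arbitrarily close to $J_1$), then pick a connected component of $\partial N$ whose two complementary regions each contain points of $J$.

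Because $\gamma$ is connected and lies in $F$, it is contained in a single Fatou component $U$. Since $U$ is simply connected in $\hat{\CC}$, the complement $\hat{\CC}\setminus U$ is connected, so one of the two topological disks bounded by $\gamma$ lies entirely in $U \subset F$, while the other contains all of $\hat{\CC}\setminus U$, hence all of $J$. This contradicts the fact that $\gamma$ was chosen to separate $J_1$ from $J_2$, and so $J$ must be connected.

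The main obstacle is the first step, namely verifying that every Fatou component—in particular any nonzero (super)attracting or parabolic basin component that might contain an asymptotic value—is simply connected; the fact that preimages of domains containing an asymptotic value have unbounded components (Lemma~\ref{preimageSC}(2), Theorem~\ref{ubddcomps}) means that care is needed in this verification, but the absence of Herman rings and of Baker and wandering domains keeps the argument tractable. Once simple connectivity is in hand, the separation argument is purely topological and routine.
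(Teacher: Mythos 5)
Your proposal takes essentially the same route as the paper: both reduce connectivity of $J$ to simple connectivity of every Fatou component (via Lemma~\ref{simpconn} for the basin $B$, the classification of periodic Fatou components, and the absence of Herman rings from Theorem~\ref{NOHR}), and then invoke the standard topological fact that a compact subset of $\chat$ is connected if and only if every component of its complement is simply connected. You make the final topological deduction explicit while the paper leaves it implicit and is somewhat more explicit about the case analysis, but this is a presentational difference rather than a different argument; the only point needing a touch more care in your write-up is the claim that some component of $\partial N$ already separates $J$ (one should pass to the component $V$ of $\chat\setminus\partial N$ containing a point of $J_2$ and choose the boundary curve of $V$ that bounds the complementary disk holding a given point of $J_1$), and that simple connectivity of non-basin components is indeed Theorem~B, whose proof in the paper again rests on Lemma~\ref{preimageSC} and Theorem~\ref{NOHR} exactly as you indicate.
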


\begin{proof}
If $B_0$ is not completely invariant $v,v' \not\in B_0$ and by lemmas \ref{allin} and \ref{preimageSC},   the basin $B$ consists of  infinitely many bounded simply connected components.  

If $v,v' \in B \setminus B_0$, the basin is the full stable set so   the  Julia set is the complement of infinitely many simply connected components and is connected. 
 
Otherwise, either there are no other components of the Fatou set and again the Julia set is connected or there is a cycle of attracting, parabolic or Siegel domains attached to an asymptotic value.  By theorem~\ref{NOHR},  there are no Herman rings so any   components in such a cycle and their preimages are  simply connected, so again the Julia set is connected.  
\end{proof}

\section{Hyperbolic maps}
\label{hyper}
A general question in dynamical systems  is  what happens to the dynamical properties if we perturb a function slightly.  In particular, when do these properties persist under deformation;  if they do, the map is called {\em hyperbolic}.   More specific characterizations of hyperbolicity depend on the context.  The following discussion is a summary of material in \cite{DH, McM, M2} for rational maps.  Discussions of hyperbolicity for transcendental meromorphic maps that include singularly finite ones can be found in \cite{KK1, RS,Z}.  We omit proofs and direct the reader to the literature.   
 
 From the discussion below, the following  definition of hyperbolicity for rational maps is a good one to adapt to singularly finite maps.  
 
 \begin{defn}\label{hyperbolic} Suppose $f(z)$ is a singularly finite meromorphic map.  Then $f(z)$ is {\em hyperbolic} if $PS(f) \cap J(f) = \emptyset$.
 \end{defn}
 
 Note that if $f$ is a singularly finite hyperbolic map, the singular points must all be attracted to attracting or superattracting cycles.

     A related condition,(see \cite{M2}), which  is  equivalent to the above, says  that 
      a rational map is {\em expanding on its Julia set} if there exist constants $c>0$ and $K>1$ such that for all $z$ in a neighborhood $V \supset J(f)$,   $|(f^{n})'(z)|> c K^n$.

 Because the Julia set of a meromorphic function is unbounded and its iterates have singularities at the prepoles we need a version of this condition tailored to transcendental maps.  
 We use   the following one proved in \cite{RS} which applies to hyperbolic functions in $\calf$.

 \begin{prop}[Rippon-Stallard] \label{RS}   If $S(f)$ is bounded and $\overline{PS(f)} \cap J(f) = \emptyset$,   then there exist two constants  $c>0$ and $K>1$ satisfying 
 \[ |(f^n)'(z) | > c K^n(|f^n(z)|+1)/(|z|+1). \]
 for all $z \in J(f) \setminus A_n(f)$ and all $n$ where $A_n(f)$ is the set of points where $f^n$ is not analytic (prepoles of lower order).   \end{prop}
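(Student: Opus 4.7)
The plan is to derive the inequality as the Euclidean reformulation of a uniform expansion estimate in a suitable hyperbolic metric, combined with a near-$\infty$ analysis to get the exact shape of the Euclidean factor.

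First, since $S(f)$ is bounded in $\CC$ and $\overline{PS(f)}\cap J(f)=\emptyset$, the set $\overline{PS(f)}$ is a compact subset of $\CC$. Let $W=\widehat{\CC}\setminus\overline{PS(f)}$. Then $W$ is a hyperbolic Riemann surface (its complement contains more than two points), $\infty\in W$, and $J(f)\subset W$. Because $PS(f)$ is forward-invariant and $f$ is continuous on the compact set $\overline{PS(f)}$, its closure is forward-invariant as well, so $f^{-1}(W)\subset W$. Since $W$ omits every critical and asymptotic value, $f\colon f^{-1}(W)\to W$ is a holomorphic covering map and is therefore a local isometry in the respective hyperbolic metrics: $\rho_W(f(z))|f'(z)|=\rho_{f^{-1}(W)}(z)$. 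Applied to the strict inclusion $f^{-1}(W)\hookrightarrow W$, the Schwarz--Pick lemma gives $\rho_{f^{-1}(W)}(z)\ge\rho_W(z)$, with strict inequality on compact subsets.

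Second, I would promote this to uniform expansion on $J(f)$. Because $J(f)$ is closed in $\widehat{\CC}$ and disjoint from $\overline{PS(f)}$, it is a compact subset of $W$, and the continuous ratio $\rho_{f^{-1}(W)}/\rho_W$ attains a minimum $K>1$ there. Iterating via the chain rule,
\[
|(f^n)'(z)|\;\ge\;K^n\,\frac{\rho_W(z)}{\rho_W(f^n(z))}
\]
for every $z\in J(f)\setminus A_n(f)$.

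The main obstacle is the final conversion to the specific Euclidean form with factor $(|f^n(z)|+1)/(|z|+1)$. A direct spherical comparison only yields $\rho_W(z)\asymp 1/(1+|z|^2)$ on $J(f)$ (with $\infty$ an interior point of $W$), and this produces an expression with the wrong Euclidean exponent: for $z$ large and $f^n(z)$ bounded, the spherical ratio is smaller than $(|f^n(z)|+1)/(|z|+1)$, not larger. The refinement, which is the content of the Rippon--Stallard argument in \cite{RS}, is to work in logarithmic coordinates $\zeta=\log z$ near infinity, where the conformal metric $|dz|/(1+|z|)$ pulls back to something comparable to $|d\zeta|$; and to combine this with the local asymptotic behavior of $f$ near its poles, where $f(z)\sim c/(z-p)^m$ forces $|f'(z)|$ to dominate a power of $|f(z)|$ strictly greater than one. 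One splits $J(f)$ into the compact ``core'' on which the hyperbolic-to-Euclidean comparison is routine, and the near-$\infty$ part where the logarithmic-coordinate estimate and the pole asymptotics are glued on, choosing the constants so that both regimes yield the same expansion form. The delicate point is that the constants $c>0$ and $K>1$ must be uniform in $n$, which is exactly what the chain rule applied to the hyperbolic estimate guarantees once the one-step Euclidean estimate $|f'(z)|\ge K(1+|f(z)|)/(1+|z|)$ has been established on $J(f)$.
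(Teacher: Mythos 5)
First, a point of comparison: the paper does not prove this proposition at all --- it is quoted from \cite{RS} and used as a black box --- so your proposal has to stand on its own, and as written it does not. Your framework (hyperbolic metric on the complement of $\overline{PS(f)}$, covering property, Schwarz--Pick, chain rule) is the standard and correct starting point, but there are two genuine gaps. The first is the covering claim. You take $W=\widehat{\CC}\setminus\overline{PS(f)}$, so $\infty\in W$, and assert that $f\colon f^{-1}(W)\to W$ is a covering because $W$ omits every critical and asymptotic value. But $\infty$ \emph{is} a critical value whenever $f$ has a pole of order greater than one, which is exactly the case this paper needs: for $f_\la=\la\tan^pz^q$ with $p\ge 2$ every pole has order $p$ and the paper lists $\infty\in S(f_\la)$. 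Over $\infty$ the map is then branched, the local-isometry identity $\rho_W(f(z))|f'(z)|=\rho_{f^{-1}(W)}(z)$ fails, and Schwarz--Pick only gives $\rho_W(f(z))|f'(z)|\le\rho_{f^{-1}(W)}(z)$ --- an upper bound on $|f'|$, useless for expansion. The repair is to delete $\infty$ and work over the plane domain $\CC\setminus\overline{PS(f)}$, over which $f$ genuinely is a covering; but then $\infty$ becomes a puncture of $W$ and $\rho_W(z)\asymp 1/(|z|\log|z|)$ near it.

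That logarithm is precisely why your final step cannot be completed the way you set it up. Converting $|(f^n)'(z)|\ge K^n\rho_W(z)/\rho_W(f^n(z))$ into the stated inequality requires the two-sided comparison $\rho_W(\cdot)\asymp 1/(1+|\cdot|)$ on $J(f)$; the upper bound $\rho_W(w)\le 2/\mathrm{dist}(w,\partial W)\lesssim 1/(1+|w|)$ is fine, but the matching lower bound fails near $\infty$ by the unbounded factor $\log|z|$, and $J(f)$ is unbounded. You notice this yourself (``the wrong Euclidean exponent'') and then state that the refinement ``is the content of the Rippon--Stallard argument in [RS]'' --- that is, you defer the only nontrivial step to the very source the proposition is being quoted from. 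The one-step estimate $|f'(z)|\ge K(1+|f(z)|)/(1+|z|)$ on $J(f)$, from which the theorem would indeed follow by telescoping the chain rule as you say, is never established: the gluing of the compact core, the logarithmic-coordinate regime near $\infty$, and the pole asymptotics is described but not carried out, and it is exactly there that the multiple poles and the essential singularity make the argument delicate. As it stands, the proposal is an accurate identification of the strategy and of the obstruction, not a proof.
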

 Note that in the family  $\calf$ the poles  are critical points in the Julia set and their forward orbits are finite. 
   They and their preimages form the set $A_n(f)$.     

\section{Julia set dichotomy}
\label{Julia dichotomy}
The Julia set of a hyperbolic quadratic map is either locally connected or a Cantor set.  In this section, we prove the same is true for maps in the family $\calf$. 

\subsection{Local connectivity for hyperbolic maps}
In this section we prove
\begin{thm}\label{J is loc conn} 
If the Julia set of a hyperbolic map in $\calf$ is connected,  it is  locally connected.
\end{thm}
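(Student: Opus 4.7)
The plan is to verify the hypotheses of Whyburn's criterion: a compact connected subset $K \subset \widehat{\CC}$ is locally connected provided every complementary component has locally connected boundary and, for every $\varepsilon > 0$, only finitely many complementary components have spherical diameter exceeding $\varepsilon$. By Corollary~\ref{Jconn}, $J(f_\lambda)$ is connected in the hyperbolic case under consideration, and its complementary components are precisely the Fatou components. The main technical input will be the Rippon--Stallard expansion estimate (Proposition~\ref{RS}), together with the classification of Fatou components established in Section~\ref{family}.

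First I would treat the periodic Fatou components. Since $B_0$ does not contain an asymptotic value, Lemma~\ref{allin} and Theorem~\ref{Bott} give a conformal equivalence $\phi : B_0 \to \DD$ conjugating $f_\lambda$ to $z \mapsto z^{pq}$. Hyperbolicity means $\overline{PS(f_\lambda)} \cap \partial B_0 = \emptyset$, so Proposition~\ref{RS} gives uniform expansion on a neighborhood of $\partial B_0$ in $J$; a standard Carathéodory-type argument then shows $\phi^{-1}$ extends continuously to $\overline{\DD}$, so $\partial B_0$ is locally connected (and in fact a Jordan curve). The same reasoning, with the Koenigs linearizer in place of Böttcher, handles any non-central attracting basin, which exists only if an asymptotic value is captured by such a cycle.

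Next I would pull back local connectivity to the preperiodic components. Every Fatou component is eventually periodic by Theorem~\ref{NoWD}, and by Theorem~B all components are simply connected. For a bounded preimage $U$ of an already-treated component $V$, the map $f_\lambda : U \to V$ is a branched cover of finite degree (Lemma~\ref{preimageSC}), so local connectivity of $\partial V$ transfers to $\partial U$. For the unbounded preimages lying in asymptotic tracts (classified in Theorem~\ref{ubddcomps}), $f_\lambda$ acts as a universal cover of a punctured neighborhood of an asymptotic value; viewed in the spherical metric, these components accumulate at $\infty \in J(f_\lambda)$, and the boundaries are pieced together from arcs along the Julia rays (which carry the prepoles). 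Using the RS estimate in the spherical metric along with the nested preimage structure from Lemma~\ref{preimageSC}, one pulls the locally connected boundary from $V$ back through the logarithmic covering and verifies local connectivity at the boundary point $\infty$ by exhibiting arbitrarily small connected neighborhoods of $\infty$ inside $J$ bounded by prepole rays.

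Finally I would establish the shrinking lemma. Fix a finite cover of $J(f_\lambda)$ by small spherical disks disjoint from $PS(f_\lambda)$; since $\overline{PS(f_\lambda)}$ is compact and disjoint from $J$, Proposition~\ref{RS} yields uniform exponential expansion along all inverse branches into these disks, so preimages of any Fatou component shrink exponentially in spherical diameter. Because there are only finitely many ``generation zero'' Fatou components of given diameter, this forces only finitely many Fatou components to have spherical diameter exceeding any prescribed $\varepsilon$. Combining the two conditions, Whyburn's theorem yields the local connectivity of $J(f_\lambda)$. The main obstacle in this scheme is the essential singularity at $\infty$ together with the fact that the poles have multiplicity $> 1$ and therefore are critical points in $J$: the naive expansion fails at prepoles, which is precisely why the sharper Rippon--Stallard estimate (allowing exclusion of the prepole set $A_n(f)$ while retaining exponential growth) is required, and why the unbounded Fatou components in the asymptotic tracts need to be analyzed in the spherical rather than Euclidean metric.
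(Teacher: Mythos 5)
Your plan is the Douady--Milnor scheme the paper follows (see \cite{M2} \S19): verify Whyburn's criterion by showing (i) each Fatou component has locally connected boundary, and (ii) only finitely many Fatou components exceed a given spherical diameter. For (i) your outline matches the paper's Lemma~\ref{key1}: B\"ottcher on $B_0$, K\"onigs on a non-zero attracting cycle, the Rippon--Stallard estimate to get uniform landing, and pull back through finite-degree branched covers; the paper's treatment of the single unbounded periodic component (fundamental strips cut out by preimages of the ray from the fixed point to the asymptotic value) is more detailed than your sketch but the same in spirit. Your parenthetical ``in fact a Jordan curve'' is not justified and not needed --- local connectivity is all Whyburn requires, and the paper makes no Jordan claim.

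Where you genuinely diverge is the shrinking lemma. The paper's Lemma~\ref{key2} does \emph{not} invoke Rippon--Stallard at all: it exploits the explicit factorization $f=L\circ P\circ T\circ Q$, the $\pi$-periodicity of $\tan$ to trap level-one preimages of $B_0$ in congruent rectangles $R_k$, and the contraction of $Q^{-1}(z)=z^{1/q}$ by a factor of order $(k\pi)^{-(q-1)/q}$, with Koebe distortion handling higher-order preimages; this is also why the paper separates $q>1$ (Euclidean) from $q=1$ (spherical). Your alternative --- a finite spherical cover of $J$ away from $PS(f)$ plus ``uniform exponential expansion'' from Proposition~\ref{RS} --- is in the right direction but, as stated, glosses over two things: the estimate is valid only on $J\setminus A_n(f)$, and near a pole of order $p\ge 2$ the \emph{spherical} derivative of $f$ vanishes, so the naive claim of uniform spherical expansion fails in neighborhoods of prepoles. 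One can rescue your version (spherical diameters of preimages near far-away poles shrink because the poles themselves accumulate at $\infty$, and there are only finitely many near the origin), but that is essentially re-deriving the structure the paper gets for free from the factorization. In short: same Whyburn scaffolding and same treatment of boundaries, but the paper's diameter estimate is a concrete geometric computation rather than a generic expansion argument, and your compressed RS-only version needs the extra care near the prepoles to be complete.
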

This is proved for rational maps in \cite{M2}, section 19, based on Douady's work.    The proof there follows directly from three lemmas.  Here we  adapt them to the family $\calf$ to obtain the proof for these maps.   In the following lemmas we assume the Julia set is connected and $f$ is hyperbolic.  As we saw above, this means that $B_0$ is not completely invariant, all components of the Fatou set are simply connected and the asymptotic value is either attracted to zero or a non-zero attracting cycle.

\begin{lemma}\label{key1} If $f$ is hyperbolic and $U$ is a simply connected component of the Fatou set, then $\partial U$ is locally connected. 
\end{lemma}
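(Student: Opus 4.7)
The plan is to reduce to the case where $U$ is periodic, produce a uniformizing map from $\mathbb{D}$ to $U$, and then show this map extends continuously to $\partial\mathbb{D}$, whence Carath\'eodory's theorem gives local connectivity of $\partial U$. Since $f$ is singularly finite, there are no wandering domains (Theorem \ref{NoWD}), so $U$ is eventually periodic: some $f^k$ maps $U$ onto a periodic component $V$ of period $p$. Because all components of the Fatou set are simply connected (Theorem B) and no singular values lie in $\overline{U}\setminus\overline{V}$, the map $f^k:U\to V$ is a proper holomorphic branched cover of finite degree, and local connectivity of $\partial V$ lifts to $\partial U$ by a standard lifting argument. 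So we may assume $U$ is periodic. Hyperbolicity, together with Theorem \ref{NOHR} and $\overline{PS(f)}\cap J(f)=\emptyset$, forces the cycle of $U$ to be attracting or superattracting.

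In the superattracting case, the only superattracting cycle for any $f_\la\in\calf$ is $\{0\}$ (every non-zero critical point maps directly to $0$), so $V=B_0$ and Theorem \ref{Bott} gives a conformal $\phi:B_0\to\mathbb{D}$ with $\phi\circ f=\phi^{pq}$. In the attracting case, let $\psi:\mathbb{D}\to V$ be a Riemann map; then $\psi^{-1}\circ f^p\circ\psi$ is a proper holomorphic self-map of $\mathbb{D}$ fixing an interior point, hence a finite Blaschke product $B$ of degree $d\ge 2$ (the first-return map carries a singular value in the cycle, producing a critical point in $\mathbb{D}$). In either case, one performs the standard tile construction: choose $r<1$ close to $1$ so that $z^{pq}$ (respectively $B$) maps $\{r\le|w|<1\}$ into $\{r'\le|w|<1\}$ for some $r'>r$, partition this collar into finitely many closed Jordan pieces $T_1,\ldots,T_N$, and lift these through the dynamics to obtain tiles $T^n_\alpha\subset U$ at every level. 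For each prime end $\zeta\in\partial\mathbb{D}$ there is a nested sequence $T^n_{\alpha_n}$ whose $\psi$-images accumulate at $\zeta$, and continuity of $\psi$ at $\zeta$ will follow once we prove $\mathrm{diam}(T^n_{\alpha_n})\to 0$ in the spherical metric.

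The shrinking lemma is the heart of the proof, and the main obstacle because the Julia set in our family is unbounded and contains the poles (which are critical), so the classical rational-map expansion argument does not apply directly. This is exactly the situation handled by Proposition \ref{RS}: for $z\in J(f)\setminus A_n(f)$,
\[ |(f^n)'(z)|\ge cK^n\,\frac{|f^n(z)|+1}{|z|+1}, \]
and the weighting factor $(|f^n(z)|+1)/(|z|+1)$ absorbs the non-compactness of $J(f)$. To transfer this bound from boundary points into the tile interiors, one covers a neighborhood of each $T^n_\alpha$ by Koebe charts whose images under $f^n$ stay in a fixed compact subset of $V$; hyperbolicity keeps $\overline{PS(f)}$ uniformly away from these charts, so Koebe distortion gives comparable derivative bounds throughout each tile. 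The resulting estimate yields $\mathrm{diam}(T^n_\alpha)=O(K^{-n})$ uniformly in $\alpha$, so $\psi$ extends continuously to $\overline{\mathbb{D}}$ and $\partial U$ is the continuous image of the unit circle, hence locally connected. The delicate point to carry out carefully is the second step of the transfer: verifying that the inverse branches of $f^n$ defining the tiles genuinely avoid $A_n(f)$ and that the Koebe charts can be chosen with size bounded below uniformly in the level $n$.
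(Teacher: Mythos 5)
There is a genuine gap in the structural reduction. You claim that $f^k:U\to V$ is a proper branched cover of finite degree and, more importantly, that the first-return map $\psi^{-1}\circ f^p\circ\psi$ on an attracting periodic component is a \emph{finite} Blaschke product. In the family $\calf$ this fails: whenever there is a non-zero attracting cycle, the cycle necessarily contains an unbounded component (the component of the cycle mapping onto the one containing the asymptotic value lies in the asymptotic tracts), and on that component the return map is infinite-to-one; its boundary is an unbounded curve containing infinitely many poles. So the conjugated self-map of $\DD$ is not proper of finite degree, there is no finite tile partition of a collar $\{r\le|w|<1\}$ with finitely many inverse branches at each level, and the prime ends corresponding to $\infty$ and to the poles/prepoles need a separate treatment. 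The same problem already appears in your reduction step for the unbounded \emph{pre}periodic components of the basin of zero when the asymptotic value is captured: $f^k:U\to V$ there is an infinite-degree covering off the asymptotic value, not a proper finite cover, so "lifting local connectivity" is not automatic near infinity.

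The paper's proof confronts exactly this. For the bounded superattracting component $B_0$ your argument agrees with the paper (B\"ottcher coordinate, annulus tessellation, Rippon--Stallard in place of uniform expansion); your identification of Proposition~\ref{RS} as the substitute for compact expansion, and of the Koebe-chart transfer avoiding $A_n(f)$, is the right idea for the shrinking step. But for the unbounded periodic component the paper replaces the Blaschke-product picture by the K\"onig linearizing coordinate, the distinguished ray $R_{t^*}$ joining the fixed point to the asymptotic value, and the infinitely many preimages of that ray, which land at the poles and cut $V$ into strips indexed like branches of the logarithm; the fundamental annulus is then pulled back along sequences of inverse branches confined to these strips, with accessibility of the (pre)poles handling the tiles that nest down to them and Rippon--Stallard handling the rest. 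Your proposal needs this additional combinatorial structure (or some substitute for it) before the "nested tiles shrink" argument can be run; as written, the finite-degree assumption on which the tile construction rests is false for the components where local connectivity is actually in doubt.
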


\begin{proof} 
The immediate basin of zero, $B_0$, is  bounded and forward invariant  and therefore contains no prepoles.  We can apply the argument in \cite{M2} section 19: we use the B\"ottcher map $\phi$ on $B_0$ to conjugate $f$ to the map $z \mapsto z^{pq}$ on the disk $\DD$ and define an annulus structure that tesselates $B_0$.   There are rays $R_{t}$ in $B_0$ that are the pullbacks by $\phi$ of radii of angle $t$ in $\DD$.  As in \cite{M2}, we fix an annulus $A_0$ in $B_0$ and look at the intersection $I_{t} = R_{t}\cap A_0$.  The branches of  $f^{-1}$ that fix $B_0$ define a set of  pullbacks of the $I_{t}$ whose lengths go to zero. 
  The Rippon-Stallard estimate applies to shows  that these pullbacks converge uniformly to $\partial B_0$ and therefore that it is locally connected.  

We divide the rest of the proof into two cases:

\begin{enumerate}
\item Suppose that the basin of zero, $B$, contains the asymptotic value. Then  the basin is the whole Fatou set.  If $U_n$ is any component of $B$ such that   $f^n(U_n)=B_0$,  $f^n:\partial U_n \rightarrow \partial B_0$ is a local homeomorphism and so its boundary is locally connected.  
   
\item Assume now that $f$ has a non-zero attracting periodic cycle.  
\begin{itemize}
\item 
 The boundaries of all bounded components of the basin of zero are locally connected since, as above, they contain no prepoles and argument above applies. 
\item 
 If $k=1$, there is only one component $V$ of the cycle.  Then it is unbounded,   contains a periodic point $\zeta$ and one asymptotic value $v$ .   Since $f$ is infinite to one on $V$, $\partial V$ is an infinite curve and it contains infinitely many poles.  We can conjugate $f$ on $V$ to a linear map  to $\CC$ in a neighborhood $\calo$ of $\zeta$ by the K\"onig map $\phi$ where $\phi(\zeta)=0$ and $\phi'(\zeta)=1$ and extend it injectively until we reach $v$. Suppose $\phi(v)=r_0 e^{2\pi it^*}$.  The rays in $V$ are the curves $R_t=\phi^{-1}(re^{2 \pi it})$ for fixed $t$ and varying $r$.   The ray $R_{t^*}$ joins  $\zeta$ to $v$.  Let $\gamma$ be the curve in $V$  defined by $|\phi(\gamma)|=r_0$; let  $A_0$ be the annulus between $\gamma$ and $f(\gamma)$.  It  is a fundamental domain for the action of $f$ on $V$.   Set $I_t=A_0 \cap R_t$.   
  
   There is a unique branch $g$ of $f^{-1}$ such that $\ell^*=g(R_{t^*})$  joins $\zeta$  to infinity.   The preimages  $\{l_j= f^{-1}(\ell^*) \}$ each join a pole $p_i$ on $\partial V$ to infinity and separate $V$ into strips.  Because the poles are accessible the $l_j$ extend continuously to them. 
    Starting with the strip containing the fixed point, the strips can be labelled consecutively and this can be used to enumerate the branches of the inverse in the same way one defines branches of the logarithm.  Their endpoints separate $\partial V$ into compact sets $J_i$ such that $J_i \cap J_j =p_i$.     The curves $\{ \gamma_i=f^{-1}(\gamma) \}$ are  U-shaped doubly infinite curves separating the preimages of $\zeta$ from one another.  The preimages of the $\gamma_i$ are curves in the strips that end at the prepoles.  
    
 For each possible choice of inverse branches of $f$, we obtain a pullback of $A_0$ and these form a tesselation of $V$.    Choosing sequences of inverse branches appropriately, we can form successive nests of these fundamental domains.  Each such nest is contained in one of the strips.  The inverse branches are hyperbolic isometries so the hyperbolic lengths of the pullbacks of the $I_t$ are bounded.  For each $t$, we can find of sequences of inverse branches to the pullbacks of the $I_t$ lie in a nested set of annuli.   If $t=t^*$, the pullbacks nest down to a prepole;  since these are accessible, they have a unique accumulation point.  For other values of $t$, the Rippon-Stallard estimate applies to show that the accumulation points are unique.  Therefore, the  boundary  of $V$ is locally connected. 
 
 \item
 If $k>1$, the argument is even easier.    Now the asymptotic value  is in a bounded component $V_1$.  Then $V_0=f^{-1}(V_1)$ is unbounded and fixed under $f^k$.  The argument above applies to $V_0$.   Since there are no critical values in the cycle of domains, the inverse maps  going backwards around the cycle are local homeomorphisms and    because the boundary of $V_0$ is locally connected they extend to the boundary.  This proves each of the domains in the cycle has a locally connected boundary. 
  \end{itemize}

\end{enumerate}

\end{proof}

\begin{lemma}\label{key2} Suppose that $f$ is hyperbolic with connected Julia set.     Given $\epsilon >0$, if $q>1$ the  Euclidean diameters only finitely many   components of the Fatou set  are greater than $\epsilon$.  If $q=1$, the conclusion holds in the spherical metric. 
\end{lemma}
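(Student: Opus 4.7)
The plan is to argue by contradiction. Suppose there exist infinitely many Fatou components $\{U_n\}$ with diameter at least $\epsilon$ in the relevant metric. First I would invoke Theorems \ref{NoWD} and \ref{NoBD} to conclude that every Fatou component is eventually periodic; since $f$ is hyperbolic with finitely many singular values, there are only finitely many periodic components $V_1,\dots,V_N$. For each $U_n$ there is a minimal $k_n\ge 0$ with $f^{k_n}(U_n)=V_{j_n}$, and after passing to a subsequence I may assume $j_n=j$ is constant; write $V=V_j$. By Theorem \ref{ubddcomps} at most $2q$ unbounded Fatou components can sit in asymptotic tracts, together with the finitely many unbounded periodic components, so after a further subsequence I may assume every $U_n$ is bounded. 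The argument then splits according to whether $\{k_n\}$ is bounded or $k_n\to\infty$.

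If $k_n\to\infty$, I would pick two points in $\overline{U_n}$ at distance at least $\epsilon$ and, using the local connectivity of $\partial U_n$ from Lemma \ref{key1}, connect them by an arc $\gamma_n\subset\partial U_n\subset J(f)$. The Rippon-Stallard estimate of Proposition \ref{RS}, applied along $\gamma_n$ avoiding the finitely many prepoles that could lie on it, gives
\[
 \mathrm{length}(\gamma_n)\le CK^{-k_n}\,\mathrm{length}(f^{k_n}(\gamma_n)).
\]
Since $f^{k_n}(\gamma_n)\subset\partial V$ and the $U_n$ are uniformly bounded, the right-hand side tends to $0$, contradicting $\mathrm{diam}(U_n)\ge\epsilon$.

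If $\{k_n\}$ is bounded, I pass to a subsequence with $k_n=k$ constant, so infinitely many bounded $U_n$ are components of $f^{-k}(V)$. By Lemma \ref{preimageSC} these components must cluster near $\infty$, hence near the poles of $f^k$. A local expansion of $f_\la(z)=\la\tan^p z^q$ at a pole $z_0$ with $z_0^q=(j+\tfrac12)\pi$ gives
\[
 f(z)\sim\frac{\la}{(qz_0^{q-1})^p(z-z_0)^p},
\]
so the Euclidean diameter of the preimage of any fixed bounded set near $z_0$ is $O(|z_0|^{-(q-1)})$. For $q>1$ this tends to $0$ as $|z_0|\to\infty$, and iterating $k$ times preserves the shrinkage, so only finitely many bounded components of $f^{-k}(V)$ have Euclidean diameter $\ge\epsilon$. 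When $q=1$ the Euclidean estimate degenerates, but the spherical metric itself contracts like $|z_0|^{-2}$ near $\infty$, so the spherical diameters of the bounded preimages near the poles still tend to $0$, delivering the contradiction in the spherical metric.

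The hard part will be controlling the case in which $V$ itself is unbounded, for example a period-one attracting component containing an asymptotic value as in Figure \ref{per1 central}: a naive pullback of the length of $\partial V$ along $f^{-k_n}$ does not converge because $\partial V$ has infinite length and contains infinitely many prepoles. The remedy is the K\"onig coordinate strategy used in the proof of Lemma \ref{key1}: working inside the fundamental annulus $A_0$ for $f$ on $V$, one pulls back only the bounded piece $\partial V\cap A_0$, and the Rippon-Stallard expansion applied to the finitely many strips in $V$ then controls the lengths of the corresponding pieces of $\partial U_n$, which is enough to bound $\mathrm{diam}(U_n)$ and complete the contradiction.
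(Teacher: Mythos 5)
Your case split on $\{k_n\}$ is a reasonable organizing idea, but the $k_n\to\infty$ branch has a genuine gap. The Rippon--Stallard estimate of Proposition~\ref{RS} is $|(f^n)'(z)| > cK^n(|f^n(z)|+1)/(|z|+1)$, and the factor $(|f^n(z)|+1)/(|z|+1)$ cannot be dropped: when $z\in\gamma_n\subset\partial U_n$ is far from the origin (which happens, since any bounded region can contain only finitely many of the pairwise-disjoint $U_n$ of diameter $\geq\epsilon$) while $f^{k_n}(z)$ lies on the fixed set $\partial V$, this factor is of order $1/|z|$ and can swamp $K^{k_n}$. Thus the asserted inequality $\mathrm{length}(\gamma_n)\leq CK^{-k_n}\mathrm{length}(f^{k_n}(\gamma_n))$ does not follow, and the unstated assumption ``the $U_n$ are uniformly bounded'' is precisely what fails. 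In fact Rippon--Stallard, used this way, only delivers the spherical-length contraction (the unwanted factor is roughly the ratio of spherical to Euclidean scales), so at best it can recover the $q=1$ statement, not the stronger Euclidean statement for $q>1$. A second, smaller issue in the same branch: $\mathrm{length}(f^{k_n}(\gamma_n))$ is not obviously bounded, since $\partial V$ is typically a fractal set of infinite Euclidean length; one needs the annulus-sector structure (as in the proof of Lemma~\ref{key1}) to pull back pieces of bounded length, and you do not set that up here.

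Your bounded-$k_n$ branch, by contrast, is essentially the engine the paper itself uses: the local behavior of $f_\la$ near a pole (or, for $k=1$ preimages of $B_0$, near a zero $z_0^q=k\pi$, which gives the comparable estimate) produces the factor $|z_0|^{-(q-1)}$ for the Euclidean diameter when $q>1$, with the spherical factor $|z_0|^{-2}$ saving the case $q=1$. The paper's proof, however, is not a contradiction argument and does not invoke Rippon--Stallard in this lemma at all: it conjugates by $Q(z)=z^q$ to the $\pi$-periodic map $F(\zeta)=\lambda\tan^p\zeta$, observes that each $Q(U_k)$ sits in a rectangle $R_k$ of fixed height $2y_0$ and width $\pi$ (the height bound coming from the asymptotic tracts being in the unbounded Fatou components), pulls $R_k$ back under $Q^{-1}$ to get diameter $\lesssim (k\pi)^{-(q-1)/q}$, and then controls the deeper preimages inside $Q^{-1}(R_k\setminus V_k)$ by Koebe since those inverse branches are conformal. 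This single estimate handles both the ``many iterations'' and the ``far pole'' situations simultaneously, which is exactly what your $k_n\to\infty$ branch fails to do. To repair your proof you would need either to establish the Euclidean shrinkage directly from the $Q^{-1}$ geometry as the paper does, or to prove a refined expansion estimate near distant poles that improves on Rippon--Stallard by the missing factor $|z_0|^{q-1}$ coming from the derivative of $z\mapsto z^q$.
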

 
\begin{proof}
 By hypothesis, the Fatou set  consists of infinitely many bounded components, and at most $2q$ unbounded components, each of which can be identified with an asymptotic tract.  
 As in section~\ref{dyn symmetries},   set $\omega_j=e^{\pi j/q}$,  $j=0, \ldots,2q-1$, the $q^{th}$ roots of $\pm 1$.  The labelling is such that if $j$ is even, $\omega_j$ is a root of $+1$ and if $j$ is odd, it is a root of $-1$.  The zeros of $f$ are of the form $z_{k,j}=|(k\pi)^{1/q}| \omega_j$, $j =0, \ldots, 2q-1$. 
 
 We work first with the basin of zero, $B$.  Consider $f^{-1}(B_0)=\cup_{k,j} U_{k,j}$ where $k \in \NN, j=0, \ldots 2q-1$.  Note that if $k=0$,  $U_{0,j}=B_0$ for all $j$.  For  $k \neq 0$, the $U_{k,j}$ are mutually disjoint. 
  Because  $B_0$ does not contain the asymptotic value, the $U_{k,j}$ are bounded. In addition, since $-B_0=B_0$ and $f(z\omega_j)=f(z)$, we see that $U_{k,j}=\omega_j U_{k,0}$, so their diameters are independent of $j$.   It therefore suffices to consider only those components $U_k=U_{k,0}$ along  the ray $\omega_0$.   Let $U_0=B_0$ and let $M_k$ be the diameter of $U_{k}$.   
   
    As we saw above, the map $f=\la \tan^p z^q$ is a composition of maps $f=L \circ P \circ T \circ Q$ where $L(z)=\la z, \, P(z)=z^p, \, T(z)=\tan(z)$, and $Q(z)=z^q$;  note that if $q=1$,  $Q(z)$ is the identity.

 First set $\zeta=Q(z)$ and write $F(\zeta)=f(z)$.  The Fatou set of $F$ is the image under $Q$ of the Fatou set of $f$.  Note that $F(\zeta +\pi) =F(\zeta)$,     $Q(z_k)=Q(z_{k,0})=k\pi$.  Let $V_k=Q(U_k)$.  By the periodicity of $F$, $V_{k+1}=V_k + \pi$, so these sets all have the same diameter, say   $\hat{M}$.  We also have for each $k$, $V_k=T^{-1}\circ P^{-1} \circ L^{-1} (B_0)$ for some inverse branches of $P$ and $T$.    Therefore $V_k$ is contained in a vertical strip  of width $\pi$, 
  $$H_j=\{z=x+yi \ | \ a_{j-1}\leq x\leq a_j, \text{ and } a_j-a_{j-1}=\pi\}.$$

 The map $Q$ sends the asymptotic tracts of $f$ to those of $F$, and these are the upper and lower half planes.  Therefore, there is some $y_0$ such that  the half planes $\Im \zeta >y_0, \Im \zeta < -y_0$ are in the asymptotic tracts of $F$.   Since we have assumed the asymptotic values tend to an attracting cycle, these asymptotic tracts belong to unbounded components of the Fatou set.  It follows that the components $V_k$ also lie between the horizontal lines $y=\pm y_0$.   
  Thus each $V_k$ is contained in a rectangle $R_k$ of height $2y_0$ and width $\pi$ so that $\widehat{M} \le 2y_0 \pi$.

Suppose $q>1$. 
Let $\calp_k=Q^{-1}(R_k)$ where we choose the branch of the inverse so that $U_{k} \subset \calp_k$.    Then using the derivative we estimate

\begin{equation}\label{pullbackest}
\calr_k  \subset \calr_k \approx  R_k/(k\pi)^{\frac{q-1}{q}}.
\end{equation}

Therefore,   only finitely many $U_{k}$ have diameter greater than $\epsilon$ and by symmetry, only finitely many of the other $U_{k,j}$ have diameter greater than $\epsilon$

  The remaining  bounded Fatou sets are contained in $Q^{-1}(R_k \setminus V_k)$.  These include both sets in $B$ and, if there is a non-zero attracting cycle, the bounded components of its basin.  
  Since  none of these bounded sets contains a critical value, the inverse  branches  defined on them yielding bounded preimages  are conformal homeomorphisms.  The sizes and shapes of these inverse images is controlled by Koebe estimates involving uniform bounds on the derivative of $F$ in $R_k$ and the diameters of the $U_k$. The diameters of these components shrink by     factors  of the order $1/|k\pi|^{n(q-1)}$  as $k,n$ go to infinity.

If $q=1$,  the function $f$ is the same as the function $F$.   Although the sets in each $R_k$ are  all the same size in the Euclidean metric,  as $k \to \infty$ they shrink in the spherical metric. As above,  for each $k$ the $n^{th}$ bounded preimages of the components of $B$ and those of the basin(s) of the non-zero cycle(s)  have diameters that shrink to zero so the lemma holds in this case as well.

\end{proof}

The last lemma in \cite{M2} is purely topological and applies here without modification.
\begin{lemma}
If $X$ is a compact subset of the Riemann sphere such that any component of its complement has locally connected boundary, and such that, for any $\epsilon>0$, at most finitely many of these complementary components have spherical diameter greater than $\epsilon$, then $X$ is locally connected.
\end{lemma}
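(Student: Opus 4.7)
The plan is to show directly that each $x \in X$ has arbitrarily small connected relative neighborhoods in $X$. Fix $x \in X$ and $\epsilon > 0$; the goal is to produce a connected set $K \subset X$ with $x$ in its relative interior and spherical diameter less than $\epsilon$. Let $B = B(x, \epsilon/4)$ in the spherical metric, and let $C_1, \ldots, C_N$ be the finitely many complementary components of $X$ of spherical diameter at least $\epsilon/4$; every other complementary component meeting $\overline{B}$ is entirely contained in $B(x, \epsilon/2)$ and so does not affect the diameter bound.

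The key step is to deform the circle $\partial B$ into a closed curve $\gamma \subset \partial B \cup X$ that avoids crossing into any of the large components $C_i$. For each large $C_i$ meeting $\partial B$, the boundary $\partial C_i$ is compact and locally connected by hypothesis, hence uniformly locally connected: there exists $\delta_i > 0$ such that any two points of $\partial C_i$ within spherical distance $\delta_i$ lie in a connected subset of $\partial C_i$ of diameter less than $\epsilon/(8N)$. After perturbing the radius of $B$ slightly so that $\partial B$ meets each $\partial C_i$ transversally, the intersection $\partial B \cap \overline{C_i}$ is a finite disjoint union of subarcs of $\partial B$ with endpoints on $\partial C_i$. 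I subdivide each such subarc so that its endpoints are within $\delta_i$ of one another and then replace it by a connected subset of $\partial C_i$ joining those two endpoints. The resulting modified curve $\gamma$ bounds a topological disk $U \ni x$ of spherical diameter at most $\epsilon/2 + \epsilon/(4N) < \epsilon$, and I take $K = \overline{U} \cap X$.

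That $K$ is a relative neighborhood of $x$ in $X$ is immediate, since $x$ lies in the open set $U$ at positive spherical distance from $\gamma$. For connectivity, the essential point is that $\gamma$ consists of pieces of $\partial B$ together with pieces of the locally connected sets $\partial C_i \subset X$. The pieces of $\gamma$ lying in $X$, together with the boundaries of the complementary components fully contained in $\overline{U}$, form a planar continuum, and every point of $K$ either lies on this continuum or on the boundary of some complementary component attached to it. This forces $K$ to be connected.

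The main obstacle is the combinatorial and topological bookkeeping in the modification step, in particular handling the countably many small complementary components that may straddle $\partial B$. These are dealt with by observing that only finitely many such components contribute arcs of $\partial B$ of length exceeding any given positive threshold, and by further detouring $\gamma$ around such straddling components using uniform local connectivity of their boundaries; the total deformation remains controlled because each such component has diameter less than $\epsilon/4$. This is essentially Whyburn's classical argument showing that a compact planar set whose complementary components have locally connected boundaries and vanishing diameters is itself locally connected.
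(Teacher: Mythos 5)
The paper never proves this lemma itself: it quotes it from Milnor \cite{M2}, \S 19, with the remark that it is ``purely topological and applies here without modification,'' so your proposal has to be measured against the standard argument there. That argument does not build connected neighborhoods directly. It uses the criterion that a compact metric space is locally connected iff for every $\epsilon>0$ there is $\delta>0$ such that any two of its points at distance $<\delta$ lie in a connected subset of diameter $<\epsilon$; it joins two nearby points of $X$ by a spherical geodesic segment and replaces each maximal open subinterval of the segment lying in a complementary component $U$ by a small connected subset of $\partial U$ containing that subinterval's two endpoints (uniform local connectivity of the finitely many large $\partial U_j$ controls those detours, and the remaining components already have small diameter). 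The union of the segment's intersection with $X$ with these detours is then automatically a connected subset of $X$ of controlled diameter.

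Your circle-modification scheme has genuine gaps. (i) $\partial B\cap C_i$ is in general a countably infinite union of open arcs, not a finite one, and ``transversality'' has no meaning for the boundary of an arbitrary open set. (ii) The subdivision step is incoherent: if you subdivide an arc of $\partial B$ lying inside the open set $C_i$, the new endpoints are interior to $C_i$, not on $\partial C_i$, so they cannot be joined inside $\partial C_i$; moreover $\partial C_i$ need not be connected, so a connected subset of it joining the two endpoints of an arc may simply not exist. (iii) Even granting a closed curve $\gamma$, the replacement pieces are merely connected subsets of the $\partial C_i$, not disjoint simple arcs, so $\gamma$ need not be a Jordan curve and ``bounds a topological disk $U$'' is unjustified. (iv) Most seriously, the connectivity of $K=\overline U\cap X$ does not follow and is false for your construction: take $X$ to be two disjoint closed disks, whose single complementary component has locally connected boundary; if $B(x,\epsilon/4)$ around a point $x$ of one disk reaches the other, your $K$ is disconnected. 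Your closing claim that every point of $K$ lies on the boundary of some complementary component ``attached'' to the continuum fails for interior points of $X$ (and for buried points of a Julia set), and the attachment itself is asserted rather than proved. The segment-based argument avoids all of these issues because the object being modified is already connected and its complementary gaps are bridged inside $X$ by sets sharing the gap endpoints.
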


Theorem~\ref{J is loc conn} now follows directly from these lemmas. 

 In the proof of lemma~\ref{key1}, working with hyperbolic maps, we defined the dynamic rays in $B_0$.   These rays are defined in the same way even if $f$ is not hyperbolic and their preimages define rays in the remaining components of $B$.  Similarly, if the map has a non-zero attracting cycle, there are dynamic rays in the periodic components of the cycle and their preimages are rays in the preimages of the components.  We use the notation $R_t$ for any of these dynamic rays. 
 
  If $t$ is rational,  the rays  $R_{t} $   are called {\em rational rays}.   It is clear that they are eventually periodic or periodic under iteration by $f$.   A direct corollary of Theorem~\ref{J is loc conn} is
 
 \begin{cor}\label{reppts}The landing points of the rational rays in $B_0$ are  eventually repelling periodic points. 
 \end{cor}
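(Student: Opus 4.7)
The plan is to transfer the dynamics on $B_0$ to the unit disk via the B\"ottcher coordinate, read off periodicity of ray landing points from the symbolic dynamics, and then invoke hyperbolicity to upgrade ``periodic'' to ``repelling.''

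First, since $f$ is hyperbolic with connected Julia set, Theorem~\ref{dichotomy} and Lemma~\ref{allin} force $B_0$ to contain no asymptotic value. By the Remark following Lemma~\ref{allin}, the B\"ottcher map of Theorem~\ref{Bott} then extends to a conformal isomorphism $\phi:B_0\to\DD$ conjugating $f|_{B_0}$ to $\zeta\mapsto\zeta^{pq}$. Under this conjugacy the dynamic ray $R_t$ is the $\phi$-preimage of $\{re^{2\pi it}:0<r<1\}$, and $f(R_t)=R_{pq\cdot t\bmod 1}$.

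Second, Theorem~\ref{J is loc conn} gives $J(f)$ locally connected, so $\partial B_0$ is locally connected and Carath\'eodory's theorem supplies a continuous extension $\overline{\phi^{-1}}:\overline{\DD}\to\overline{B_0}$. In particular every ray $R_t$ lands at a well-defined point $z_t=\lim_{r\to 1^-}\phi^{-1}(re^{2\pi it})\in\partial B_0\subset J(f)$, and $f(z_t)=z_{pq\cdot t\bmod 1}$.

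Third, when $t\in\QQ$ the orbit $\{(pq)^n t\bmod 1\}_{n\ge0}$ is finite, so there are integers $N\ge 0$ and $k\ge 1$ with $(pq)^{N+k}t\equiv(pq)^N t\pmod 1$. The previous functional equation then yields $f^k(f^N(z_t))=f^N(z_t)$, exhibiting $f^N(z_t)$ as a periodic point in $J(f)$. To conclude, recall that $f$ hyperbolic means $PS(f)\cap J(f)=\emptyset$: an attracting or super-attracting periodic point lies in the Fatou set, and a parabolic cycle would require a singular value in every attached petal and hence in $PS(f)\cap J(f)$. Thus $f^N(z_t)$ is a repelling periodic point, and $z_t$ is eventually repelling periodic, as claimed.

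The main obstacle is the first step, namely confirming that $\phi$ extends as a \emph{global} conformal isomorphism $B_0\to\DD$ (rather than merely to the sub-disk bounded by $|\phi(v)|$); once the absence of asymptotic values in $B_0$ is secured through Theorems~A and~B, the remainder is the standard Douady--Hubbard argument transplanted to the transcendental setting, with the Rippon--Stallard estimate (Proposition~\ref{RS}) ensuring, via Theorem~\ref{J is loc conn}, the landing step works in our infinite-degree setting.
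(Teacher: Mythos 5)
Your argument is essentially the one the paper intends: the paper states this as a ``direct corollary'' of Theorem~\ref{J is loc conn} with no written proof, and your chain --- no asymptotic value in $B_0$, hence a global B\"ottcher isomorphism $B_0\to\DD$ conjugating $f$ to $\zeta\mapsto\zeta^{pq}$, local connectivity plus Carath\'eodory to land the rays, the functional equation $f(z_t)=z_{pq\cdot t\bmod 1}$ to get eventual periodicity for rational $t$, and hyperbolicity to force the periodic point to be repelling --- is exactly the standard Douady--Hubbard argument the authors are invoking.

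One small case in your final step is not closed: after excluding attracting points (they lie in the Fatou set) and parabolic cycles (their petals contain singular values, so the parabolic point lies in $\overline{PS(f)}\cap J(f)$), you have not ruled out an irrationally indifferent (Cremer) periodic point, which also lies in $J(f)$. This is easily patched: either recall that Cremer points lie in the derived set of the postsingular set, contradicting $PS(f)\cap J(f)=\emptyset$, or more directly apply Proposition~\ref{RS} to $z=f^N(z_t)\in J(f)$ with $n$ running through multiples of the period $k$, which forces $|(f^k)'(f^N(z_t))|>1$ and gives repelling outright. With that one line added, the proof is complete and coincides with the paper's intended route.
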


\subsection{Cantor Julia sets}
\label{sec:Cantor}

It follows from theorem~\ref{J is loc conn} that  if $f$ is hyperbolic and $B_0$ is not completely invariant,  the Julia set is locally connected.  In this section we show that if $B_0$ is  completely invariant, the Julia set is totally disconnected and homeomorphic to a Cantor set.

 \begin{thm}\label{Cantor}
 If the asymptotic values are in $B_0$, the Julia  set $J$  is a Cantor set and the action of $f $ on $J$ is conjugate to the one sided shift on a countable countable set of symbols.
 \end{thm}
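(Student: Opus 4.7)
By Theorem~\ref{dichotomy}, the hypothesis gives $B = B_0$. Combined with Theorems~\ref{NoWD}, \ref{NoBD}, \ref{NOHR} (no wandering domains, no Baker domains, no Herman rings) and the fact that every non-escaping cycle of Fatou components must absorb a singular value, the Fatou classification yields $F(f_\la) = B_0$, so $J = \widehat{\mathbb{C}} \setminus B_0$. Since $J$ is closed and the repelling periodic points are dense in it, it is already perfect. The goal, then, is to show $J$ is totally disconnected and to code it symbolically.

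The plan is as follows. First, I would select a simply connected open set $U \subset B_0$ with $\overline{U} \subset B_0$, containing $0$, $v$, and $v'$, and forward invariant in the sense $\overline{U} \subset f^{-1}(U)$. Such $U$ exists because all singular values orbit into a neighborhood of $0$: enlarge a small B\"ottcher neighborhood of $0$ by one preimage to pick up $v$ and $v'$, then thicken slightly. The exhaustion $B_0 = \bigcup_{n \geq 0} f^{-n}(U)$ then gives $J = \bigcap_{n \geq 0} X_n$ for $X_n = \widehat{\mathbb{C}} \setminus f^{-n}(U)$. By Case (4) of Lemma~\ref{preimageSC}, $f^{-1}(U)$ is connected with infinite connectivity, so $X_1$ is the disjoint union of countably many closed connected pieces $\{X_1^{(j)}\}_{j \in \mathbb{N}}$. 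Inductively, $X_{n+1} = f^{-1}(X_n)$ subdivides each piece of $X_n$ into countably many sub-pieces, yielding a tree-indexed family of closed sets $X_n^{(j_1, \ldots, j_n)}$.

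The crucial analytic step is to show these nested closed pieces shrink to points: $\mathrm{diam}\, X_n^{(j_1,\ldots,j_n)} \to 0$ as $n \to \infty$. Since $\overline{PS(f_\la)} \cap J = \emptyset$ and $S(f_\la)$ is bounded, I would invoke the Rippon-Stallard estimate (Proposition~\ref{RS}) to get $|(f^n)'(z)| \geq cK^n (|f^n(z)|+1)/(|z|+1)$ on $J \setminus A_n$. Combined with Koebe distortion on the inverse branches, which are well defined because each piece avoids the singular set by construction, the nested pieces contract uniformly; the spherical correction factor handles those pieces that stretch out toward $\infty$ between asymptotic tracts. Then defining $\Phi : J \to \mathbb{N}^{\mathbb{N}}$ by $\Phi(z) = (j_1, j_2, \ldots)$ whenever $f^{k-1}(z) \in X_1^{(j_k)}$ yields a homeomorphism onto its image (which, together with the point at infinity, gives a Cantor set in $\widehat{\mathbb{C}}$) that intertwines $f_\la$ with the one-sided shift $\sigma$.

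The main obstacle is the interaction between the unboundedness of some of the components $X_1^{(j)}$ and the essential singularity at infinity. Several components of $X_1$ are unbounded wedges between adjacent asymptotic tracts, and poles, which are also critical points for $pq > 1$, lie on their boundaries; standard Koebe distortion does not control geometry at $\infty$, which is precisely why the spherical weight in Proposition~\ref{RS} is essential. One must also verify that the coding correctly handles $\infty$ and the prepoles, which are a countable dense subset of $J$ and a priori could sit on the boundary between partition elements at some level. Choosing $U$ so that $\partial U$ is transverse to the Julia directions and avoids the post-singular set is the cleanest way to ensure a canonical symbol is assigned to each prepole and to $\infty$.
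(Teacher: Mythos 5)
Your plan matches the paper's argument step by step: use Case (4) of Lemma~\ref{preimageSC} to split $\widehat{\CC} \setminus f^{-1}(U)$ into countably many closed pieces around the poles, iterate to get a symbol-indexed nest of such pieces, prove the nests shrink to points, and set up the shift conjugacy. Where your picture goes astray is in the ``main obstacle'' paragraph: when $U$ is a simply connected neighborhood of $0$ containing $v$ and $v'$, Case (4) actually gives that the complementary components of $f^{-1}(U)$ are \emph{bounded} topological disks, each with a single pole in its \emph{interior}, not on its boundary; there are no unbounded wedges, since the asymptotic tracts and neighborhoods of the critical points are already absorbed into $f^{-1}(U)$, and the only unbounded component of $X_1$ is the single point $\{\infty\}$. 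Also, the contraction step in the paper is carried out not by a direct appeal to Proposition~\ref{RS} but by the rectangle-and-Koebe argument of Lemma~\ref{key2}: under $Q(z)=z^q$ the pieces lie in rectangles of uniformly bounded size, so their Euclidean diameters decay like $(k\pi)^{-(q-1)/q}$ as $k\to\infty$, while conformality of the inverse branches (branched only over $\infty$, a point the pulled-back sets avoid) handles the shrinkage as the nest deepens; your Rippon--Stallard plus Koebe route is a reasonable alternative, and is closer to what the paper uses in Lemma~\ref{key1}.
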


\begin{proof} By theorem~\ref{Bottchercoor}, there exists a neighborhood $U$ of $0$ and a  map $\varphi$   conjugating $f$ to $z\mapsto z^{pq}$. Take  $r>0$ small enough so  that $U_0=\varphi^{-1}(D_r(0))$ does not contain the asymptotic values and its boundary  does not contain any point in the forward orbit of asymptotic values.  Let $U_n$ the component of $f^{-n}(U_0)$ containing $0$ so that  $U_n\subset U_{n+1}$ and $B_0=\cup_{n=0}^\infty U_n$. 

Since we are assuming  the asymptotic values are in $B_0$, by   theorem \ref{dichotomy}, $B_0=B$.
    We claim that $J=\widehat{\mathbb{C}}\setminus B$ is a Cantor set.
  
There exists an $N$ such that $U_{N-1}$ does not contain any asymptotic values but $U_N$ contains the asymptotic value; in fact, by construction it contains also contains $v'$. By lemma \ref{preimageSC}, $U_1, \cdots, U_{N}$ is a nested set of simply connected domains and $f^{-1}(U_N)$ is an infinitely connected  set whose complement is a collection of topological disks, each punctured at a pole.   We add the poles to each of these disks and enumerate them.  To do so, recall that they lie along the rays separating the asymptotic tracts and were labelled  in section~\ref{dyn symmetries} by:
\[  p_{k^j}=(p_k)^{1/q}\omega_j, \,  j \mbox{ even and } p_{-k^j}=(p_k)^{1/q}\omega_j,  \, j \mbox{ odd }\, k \in \NN, j=0, \ldots, 2q-1,\] 
where $p_k= |(k\pi+\pi/2)^{1/q}|$.  We therefore let $A_{\pm k^j}$ be the complementary set containing $p_{ \pm k^j }$ and set $W = \widehat\CC \setminus U_N$.   The maps  $f: A_{k^j} \to W $ and $f: A_{- k^j} \to W $  are branched coverings of degree $p$ over $\infty$. 

Note that $J\subset\cup_{k\in \mathbb{N}} (\cup_{j=1}^{2q-1} A_{k^j} \cup \cup_{i=1}^{2q-1} A_{ -k^j} \cup \{\infty\} )$ so that  $J$ has infinite  connectivity. It is easy to see that 
\[  F=\cup_{n\geq 1}f^{-n}(U_N) \text{ and } J=\cap_{n\geq 1} f^{-n}(\cup_{k\in \mathbb{N}} (\cup_{i=1}^{2q-1} A_{k^j} \cup_{i=1}^{2q-1} A_{-k^j}) \cup \{\infty \}).\]

The next step is to look at the preimages of the  $A_{\pm k^j}$;  recall that they contain poles or critical points but no  singular values and so any preimage of  $A_{\pm k^j}$ 

  maps injectively onto it.    For the sake of readability, we use the notation $k^j$ where we assume $k$ is positive if $j$ is even and $k$ is negative if $j$ is odd. Consider $A_{k_0^{j_0}}$ for a fixed $j_0,k_0$;   there are 
  $p$ components of $f^{-1}(A_{ k_0^{j_0}})$ in  each $A_{k^j}$;   we denote them  by $A_{k^j_l k_0^{j_0}}$,  where $l=1, \ldots, p$.  Thus, if  $x\in A_{k^j_l k_0^{j_0}}$, then $f(x) \in A_{k_0^{j_0}}$. 

We continue to take preimages and again the maps are injective.  To enumerate them carefully would require using three new indices for each successive preimage step.  Therefore, to keep the notation readable, at the $n^{th}$ step, we will write
$k^j_l \cdots k^j_l k^j$ where there are $n$ entries and the $k,j,l$ of each entry vary independently.      

We claim that  $\cap A_{k^j_l\cdots k^j}$ is a single point which, in turn, shows that $J$ is a Cantor set.  The proof of this claim is essentially the same as the proof of lemma~\ref{key2}.  In that argument we showed that the components of the basin of zero were contained in the pullbacks of rectangles of fixed size and shape.   The same argument shows that each of the sets $(A_{\pm k^j})^q$ is contained in a    rectangle of fixed height $2y_0$ and width $\pi$ containing  $p_k$.  Taking $q^{th}$ roots we get an estimate on the diameters of sets containing the $A_{\pm k^j}$.   Again, as in lemma~\ref{key2}, because the inverse branches of $f$ on these sets are conformal,  the successive preimages shrink by a definite factor;  this proves the claim.

Each set $A_{k^j_l\cdots k^j}$ contains a unique prepole  whose order is the number of entries.  Thus we can label it $p_{k^j_l\cdots k^j}$,  assign it the symbol $k^j_l\cdots k^j$ and note that $f(p_{k^j_l\cdots k^j})$ is the prepole whose symbol is obtained by dropping the first $k^j_l$.   Recall that the Julia set is the closure of the set of prepoles.   Putting the standard sequence topology on the set of symbols of arbitrary finite length, we can form its closure $\Sigma$ by adding the symbols $\cdots k^j_l k^j$ of infinite length and the shift map $\sigma$ is continuous in this topology.  \footnote{See \cite{DK} for a detailed construction of $\Sigma$.} Thus the map $\Xi$ that assigns the  prepole to its finite symbol extends to a continuous map 
$\Xi: \Sigma \to J$  such that $\Xi \circ \sigma = f \circ \Xi$.  
\end{proof}

 \section{Parameter Plane}
 \label{param}
 As we saw above, the dynamics of the functions $f_{\la} =\la  \tan^p z^q \in \calf$, $\la \in \CC^* =\CC \setminus \{0\}$,  depend on the location of the asymptotic values.     The functions in this family are what we called {\em Generalized Nevanlinna Functions} in \cite{CK1}.  Nevanlinna functions are meromorphic functions with $m$ asymptotic values and no critical values.   A neighborhood of infinity consists of $m$ distinct asymptotic tracts contained in sectors of angle $2\pi/m$.  The poles approach infinity along the rays defining the sectors.  These rays are called Julia directions. 
  By a classical theorem of Nevanlinna \cite{Nev1} this topological description of their mapping properties is characterized by an analytic condition:  they satisfy the relation $Sg(z)=P(z)$ where $Sg= (g''/g')' -(1/2)(g''/g')^2$ is the Schwarzian differential equation and $P(z)$ is a polynomial of degree $m-2$.   This means that if $h$ is topologically conjugate to a solution $g$ of this equation, and it is meromorphic, then $h$ also satisfies a Schwarzian equation with a different polynomial of the same degree.   
 
 A {\em Generalized Nevanlinna function} has the form $h=P \circ g \circ Q$ where $g$ is a Nevanlinna function and $P$ and $Q$ are polynomials of degrees $p$ and $q$ respectively.   Thus if $g$ has $m$ asymptotic tracts, $h$ has $mq$ tracts.  
 The following theorem was proved in \cite{FK}.
 
 \begin{thm}\label{top fam} If $h_2$ is a meromorphic function topologically conjugate to a generalized Nevanlinna function  $h_1=P_1 \circ g_1 \circ Q_1$   then $h_2$ is a generalized Nevanlinna function 
 $h_2=P_2 \circ g_2 \circ Q_2$ where the degrees of $P_i$ and $Q_i$ agree and the $g_i$ have the same number of asymptotic values.  
 \end{thm}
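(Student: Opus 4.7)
The plan is to use the topological conjugacy to transfer the combinatorial structure of $h_1$ to $h_2$, and then promote this structural match to an analytic factorization in three stages (inner polynomial, middle Nevanlinna function, outer polynomial).

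First I would set up notation. Let $\varphi : \hat{\CC} \to \hat{\CC}$ be a homeomorphism with $\varphi \circ h_1 = h_2 \circ \varphi$. Because $\varphi$ realizes a topological conjugacy, it sends the singular set $S(h_1)$ to $S(h_2)$, preserves the local degree at every critical point, and sends each asymptotic tract of $h_1$ to an asymptotic tract of $h_2$ with the same covering degree. In particular, writing $q_1 = \deg Q_1$, $p_1 = \deg P_1$, and $m$ for the number of asymptotic values of $g_1$, the structural data of $h_1$ is as follows: there are exactly $m q_1$ logarithmic asymptotic tracts, organized into $m$ classes of $q_1$ tracts each (those lying over a common asymptotic value of $g_1$); the critical points split into two families, the $q_1-1$ critical points of $Q_1$ together with the full $(g_1 \circ Q_1)$-preimages of the $p_1-1$ critical points of $P_1$. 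By the conjugacy, $h_2$ carries the identical pattern of tracts and critical points.

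The next step is to extract the inner polynomial $Q_2$. The $m$ classes of $q_1$ tracts in $h_1$ arise because $Q_1$ is a degree $q_1$ branched covering and the $q_1$ preimages of each asymptotic tract of $g_1$ are permuted cyclically near $\infty$. This cyclic $q_1$-fold structure is a topological invariant, so it is present for $h_2$. I would invoke Nevanlinna's characterization (the Schwarzian equation $Sg = P$ with $\deg P = m-2$) together with a Riemann--Hurwitz count applied near $\infty$ to conclude that $h_2$ admits a factorization $h_2 = H_2 \circ Q_2$ where $Q_2$ is a polynomial of degree $q_1$ whose critical structure matches that of $Q_1$, and $H_2$ is a meromorphic function with $m$ asymptotic values and only those critical values inherited from $P_1$. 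Equivalently, the inner covering of degree $q_1$ is forced by the topological type.

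The outer polynomial $P_2$ is extracted analogously. The critical values of $h_2$ that come from the $(p_1-1)$ critical points of $P_1$ appear in $h_2$ as a finite distinguished subset of its singular values whose preimage structure matches that of a degree $p_1$ polynomial acting after a function without critical points. Applying the same type of lifting argument on the image side, I would write $H_2 = P_2 \circ g_2$ with $\deg P_2 = p_1$. The remaining factor $g_2$ then has exactly $m$ asymptotic values (namely the preimages under $P_2$ of the asymptotic values of $h_2$) and no critical values, so by Nevanlinna's theorem $g_2$ satisfies a Schwarzian equation $Sg_2 = P$ with $\deg P = m-2$, hence $g_2$ is itself a Nevanlinna function with $m$ asymptotic values. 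This gives $h_2 = P_2 \circ g_2 \circ Q_2$ with the desired matching of degrees and asymptotic count.

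The main obstacle is the middle step: producing the analytic polynomial factorizations $Q_2$ and $P_2$ from purely topological data. A topological conjugacy is far from quasiconformal, so one cannot simply pull back a complex structure. The way I would handle this is to observe that the existence of the branched coverings $Q_2$ and $P_2$ is dictated by the local mapping degrees near $\infty$ (for $Q_2$) and near the finite critical values (for $P_2$), both of which are topological invariants preserved by $\varphi$; once the combinatorial branched covers exist, the classification of polynomial branched self-coverings of $\CC$ of fixed degree produces honest polynomials. The uniqueness of the resulting factorization (up to affine pre- and post-composition absorbed into $g_2$) then follows from the fact that $g_2$ is forced to have no critical values, which pins down the remaining ambiguity.
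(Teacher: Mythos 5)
Your proposal correctly identifies the combinatorial data that a topological conjugacy $\varphi$ must transport: the essential singularity at $\infty$ (so $\varphi(\infty)=\infty$ and poles go to poles), the asymptotic tracts (since $h_2(\varphi(A))=\varphi(h_1(A))$ shows $\varphi(A)$ is a tract of $h_2$ over $\varphi(v)$ whenever $A$ is a tract of $h_1$ over $v$), and the critical points with their local degrees, provided $\varphi$ is orientation-preserving. Your census of this data for $h_1 = P_1 \circ g_1 \circ Q_1$ is also right. Note that the paper itself does not prove this statement but cites \cite{FK}; so the comparison here is on the merits of your argument alone.

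The genuine gap is the promotion of the combinatorial data to an analytic factorization, and you flag it yourself but do not close it. The difficulty is exactly the one you name: $\varphi$ is merely a homeomorphism, so it cannot transport a complex structure, and ``the cyclic $q_1$-fold pattern of tracts near $\infty$'' is a statement about a cyclic arrangement of open sets, not about any holomorphic map. The closing sentence, that ``once the combinatorial branched covers exist, the classification of polynomial branched self-coverings of $\CC$ of fixed degree produces honest polynomials,'' conflates two things: every \emph{proper holomorphic} self-map of $\CC$ is indeed a polynomial, but what you have (implicitly, by pushing the $Q_1$-fiber partition forward by $\varphi$) is at best a \emph{topological} degree-$q_1$ branched covering. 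Nothing in the argument shows that this covering is holomorphic, that $h_2$ is constant on its fibers, or that there exists a holomorphic $Q_2$ inducing the same fiber partition. Likewise ``a Riemann--Hurwitz count applied near $\infty$'' is not meaningful at an essential singularity; Riemann--Hurwitz is a formula for finite branched coverings between closed surfaces. What the argument actually needs at this point is either the Nevanlinna--Elfving inverse theory that recovers a meromorphic function of finite singular type from its line complex (equivalently, solves a Schwarzian equation with data dictated by the tract/critical-point structure), or a direct quasiconformal surgery producing $Q_2$, $g_2$, $P_2$ together and verifying that $g_2$ has no critical points. Neither appears, so the heart of the proof is missing.

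A secondary but real problem is the phrase ``$m$ classes of $q_1$ tracts each, those lying over a common asymptotic value of $g_1$.'' That partition is defined via the factorization of $h_1$, not via $h_1$ as a bare meromorphic map, so it is not a priori a topological invariant: if $P_1$ identifies asymptotic values of $g_1$ (as happens for $P_1(z)=z^p$ with $p$ even and $g_1=\tan$, where $P_1(i)=P_1(-i)$), several of your classes serve the same asymptotic value of $h_1$ and the class structure is not detectable from $S(h_2)$. One must instead recover it from the cyclic order of tracts and Julia directions around $\infty$ (the line-complex data), which is the right invariant but requires a more careful combinatorial argument than is given. Relatedly, the final claim that $g_2$ ``has exactly $m$ asymptotic values, namely the $P_2$-preimages of the asymptotic values of $h_2$'' presupposes the factorization whose existence is at issue.
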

 
 As a corollary we have
 \begin{cor} Let $f_{\la} \in \calf$ be a hyperbolic function and let $h$ be a meromorphic function topologically conjugate to $f_{\la}$.  Then, up to conjugation by an affine transformation, $h=f_{\la'}$ for some $\la'$.
 \end{cor}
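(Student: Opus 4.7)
My plan is to apply Theorem~\ref{top fam} to produce a factorization of $h$ that matches the shape of $f_\la$, and then use the super-attracting structure together with combinatorial data from the conjugacy to normalize it by an affine conjugation. Applying Theorem~\ref{top fam} with $h_1 = f_\la$ gives $h = P_2 \circ g_2 \circ Q_2$ with $\deg P_2 = p$, $\deg Q_2 = q$, and $g_2$ a Nevanlinna function with exactly two asymptotic values. Any such $g_2$ is affinely equivalent to $\tan$ (its Schwarzian is a constant, and the only solutions realizing two asymptotic values are affine conjugates of $\tan$), so by absorbing the outer and inner affines into $P_2$ and $Q_2$ we may assume $g_2 = \tan$.

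Next I would use the super-attracting fixed point. Topological conjugacy preserves super-attracting fixed points together with their local degrees, so $h$ has such a point $z^*$ of local degree $pq$. Conjugating $h$ by the affine map $z \mapsto z - z^*$ we may assume $h(0) = 0$ with $h(z) = c z^{pq} + \cdots$. Since $\tan'$ never vanishes, the local degree of $\tan$ at $Q_2(0)$ is $1$, so the local degrees $d_1, d_2$ of $Q_2$ at $0$ and of $P_2$ at $\tan Q_2(0)$ satisfy $d_1 d_2 = pq$; with $d_1 \le q$ and $d_2 \le p$ this forces $d_1 = q$ and $d_2 = p$. Hence $Q_2(z) = \alpha z^q + \beta$ with $\beta = Q_2(0)$, and $P_2(w) = \gamma (w - \tan\beta)^p$, using $P_2(\tan\beta) = h(0) = 0$.

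The main obstacle is showing $\tan\beta = 0$. In the coordinate $u = \alpha z^q$ the zeros of $h$ lie at $u \in \pi\ZZ$ and the poles at $u \in \tfrac{\pi}{2} - \beta + \pi\ZZ$, while for $f_\la$ in $u = z^q$ the corresponding zeros and poles lie at $\pi\ZZ$ and $\tfrac{\pi}{2} + \pi\ZZ$. I would extract two invariants from the topological conjugacy: (i) the asymptotic values $P_2(\pm i) = \gamma(\pm i - \tan\beta)^p$ must coincide when $p$ is even and be antipodal when $p$ is odd, matching the behaviour of $i^p\la$ and $(-i)^p\la$ for $f_\la$; and (ii) the cyclic ordering of the $2q$ asymptotic tract directions at infinity together with the interleaved Julia directions, together with the cyclic order of landing points of B\"ottcher rays from Theorem~\ref{Bott} on $\partial B_0$ (well-defined by Theorem~\ref{J is loc conn}, or by Theorem~\ref{Cantor} in the completely invariant case), is a topological invariant and must match that of $f_\la$. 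Combined with the identity $w_0 = \tan\beta$ from the previous step, these constraints force $\beta \in \pi\ZZ$.

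Once $\tan\beta = 0$, the $\pi$-periodicity of $\tan$ gives $h(z) = \gamma \tan^p(\alpha z^q)$. Conjugating by $\sigma(z) = \alpha^{-1/q} z$ then turns $h$ into $\la' \tan^p z^q$ with $\la' = \alpha^{1/q}\gamma$, so $h$ is affinely conjugate to $f_{\la'}$. The degree bookkeeping and the Nevanlinna classification are essentially routine once Theorem~\ref{top fam} is in hand; the delicate step is assembling enough combinatorial information from the conjugacy to pin down $\beta$ modulo $\pi$.
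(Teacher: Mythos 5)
The paper itself does not supply a proof; it simply cites Theorem~\ref{top fam}, so your job was to flesh out what that theorem yields, and the overall plan (decompose, normalize the inner Nevanlinna factor, use the super-attracting fixed point to pin down the polynomials, then argue $\beta\in\pi\ZZ$) is a reasonable one. There are, however, two genuine gaps.

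First, the normalization of $g_2$. The assertion ``any Nevanlinna function with two asymptotic values is affinely equivalent to $\tan$'' is not correct: the general solution of $Sg=\mathrm{const}$ with two asymptotic values is $M\circ\tan\circ L$ for a \emph{M\"obius} $M$ and affine $L$ (e.g.\ $e^z$ with asymptotic values $0,\infty$, or $1/(\tan z-a)$ with two finite ones). You can eliminate the $e^z$-type case because a topological conjugate of $f_\la$ must have poles, but the genuinely M\"obius $M$ cannot be absorbed into $P_2$: replacing $g_2=M\circ\tan$ by $\tan$ replaces the polynomial $P_2$ with the rational function $P_2\circ M$, which is no longer a polynomial unless $M$ is already affine. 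Something must be said to force $M$ affine.

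Second, and more seriously, the step that concludes $\tan\beta=0$. Your constraint (i) is partly non-topological: ``the asymptotic values coincide'' is preserved by conjugacy, but ``the asymptotic values are antipodal about $0$'' is a metric statement, and a homeomorphism conjugating the dynamics does not carry the symmetric pair $\pm v_\la$ to a symmetric pair. Even if one were to grant antipodality, it does not determine $\tan\beta$: for $p=3$, $q=1$ the value $\tan\beta=i\sqrt3$ satisfies $(i-\tan\beta)^3=-(-i-\tan\beta)^3$, yet $\gamma\bigl(\tan(\alpha z+\beta)-\tan\beta\bigr)^3$ is \emph{not} linearly conjugate to any $\la'\tan^3 z$, since the addition formula gives $\tan(u+\beta)-\tan\beta=\tan u\,\sec^2\beta/(1-\tan u\tan\beta)$ and the linear-fractional denominator does not go away when $\tan\beta\neq 0$. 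Your constraint (ii) about cyclic orders of tracts, Julia rays, and B\"ottcher-ray landings is stated but never carried out, and it is not at all clear how it produces the identity $\beta\in\pi\ZZ$ or rules out the non-affine $M$. What the argument is actually missing is the transfer of the \emph{deck} symmetries of $f_\la$: for every root of unity $\omega$ with $f_\la(\omega z)=f_\la(z)$ the conjugacy produces a homeomorphism $\tau$ with $h\circ\tau=h$; such a $\tau$ is automatically conformal (it is locally a composition of inverse branches of $h$), fixes $0$ and $\infty$, hence is a rotation, and this is the geometric rigidity that collapses both $M$ and $\beta$. For $p$ even this deck group has order $2q$ and the computation closes; for $p$ odd the deck group has order only $q$ and the ``anti-symmetry'' $f_\la(\omega_j z)=-f_\la(z)$ for $j$ odd still has to be handled, which your proposal does not address. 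As written, the proof does not go through.
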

 
Another corollary that follows by standard quasiconformal surgery techniques, (for example\cite{BF}) is
 \begin{cor} If $f_{\la}$ is a hyperbolic function and $g$ is a meromorphic function topologically conjugate to $f_{\la}$, then $g$ is quasiconformally conjugate to $f_{\la}$.  \end{cor}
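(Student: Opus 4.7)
The plan is to run standard holomorphic/quasiconformal surgery adapted to our singularly finite setting. By the preceding corollary we may assume, after an affine change of coordinates, that $g=f_{\la'}$ for some $\la'\in\CC^*$, and we have a homeomorphism $\Phi:\hat\CC\to\hat\CC$ with $\Phi\circ f_{\la}=f_{\la'}\circ\Phi$. Since $f_{\la}$ is hyperbolic, so is $f_{\la'}$ (hyperbolicity is a topological property here), and $\Phi$ carries singular values to singular values, $B_0$ to $B_0'$, and, if present, the non-zero attracting cycle of $f_{\la}$ to that of $f_{\la'}$. The goal is to replace $\Phi$ by a quasiconformal conjugacy $\Psi$.

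First I would build $\Psi$ on the immediate superattracting basins. By Theorem~\ref{Bott}, on $B_0$ and $B_0'$ we have B\"ottcher coordinates $\phi$ and $\phi'$ conjugating $f_{\la}$ and $f_{\la'}$ to $z\mapsto z^{pq}$ on a round disk $D_r\subset\DD$, where $r$ is chosen so that $\phi^{-1}(D_r)$ and $(\phi')^{-1}(D_r)$ contain no singular values. On these nested ``first cells'' set $\Psi=(\phi')^{-1}\circ\phi$, which is conformal, hence $1$-quasiconformal. On the closed annular region between $\phi^{-1}(\overline{D_{r^{pq}}})$ and $\phi^{-1}(D_r)$ (a fundamental domain for the $z\mapsto z^{pq}$ action) one then interpolates between this boundary definition and the homeomorphism $\Phi$ by a diffeomorphism (isotopic to $\Phi$ rel boundary); the result is quasiconformal on a compact fundamental annulus in $B_0$. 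Pulling back by the $f_{\la}$-dynamics using the equivariance $\Psi\circ f_{\la}=f_{\la'}\circ\Psi$ extends $\Psi$ to all of $B_0$; since the pullbacks are by conformal inverse branches of $f_{\la}$ and $f_{\la'}$ (the B\"ottcher coordinate being univalent up to the asymptotic value, cf.\ Lemma~\ref{allin}), the dilatation is preserved and $\Psi|_{B_0}$ is quasiconformal. If there is a non-zero attracting cycle, the identical procedure is carried out using the K\"onigs linearization at each periodic point of the cycle, as in the proof of Lemma~\ref{key1}, producing $\Psi$ on the cycle of Fatou components.

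Second, I extend $\Psi$ to the entire Fatou set by iterated pullback. Every Fatou component is eventually periodic (Theorem~\ref{NoWD}, no Baker domains by Theorem~\ref{NoBD}, no Herman rings by Theorem~\ref{NOHR}, and $B_0$ is not completely invariant since the asymptotic value is attracted into the cycle structure covered above). For a Fatou component $U$ with $f_{\la}^n(U)$ one of the components already handled, choose inverse branches of $f_{\la}^n$ along the orbit and define $\Psi|_U$ by the equivariance relation. Even through critical and asymptotic values the construction works because the preimage description in Lemma~\ref{preimageSC} is identical for $f_{\la}$ and $f_{\la'}$ (same $p,q$), so branches match up; unbounded components in asymptotic tracts are handled with the asymptotic-tract covering (Case~(2) of Lemma~\ref{preimageSC}), which is a universal covering for both maps, allowing a consistent lift. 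The dilatation of $\Psi$ on the Fatou set stays bounded by the dilatation on the initial fundamental domain.

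Finally, $\Psi$ must be extended to the Julia set. This is where the Rippon--Stallard estimate (Proposition~\ref{RS}) enters as the main technical input. Using Proposition~\ref{RS} for $f_{\la}$ (and $f_{\la'}$), the diameters of the pullback cells used to tile each Fatou component shrink uniformly to zero away from prepoles, exactly as in Lemma~\ref{key2}; combined with the density of prepoles in $J(f_{\la})$ and the equivariant correspondence $\Psi$ sets up between the tilings, $\Psi$ extends continuously to the closure of the Fatou set, i.e.\ to all of $\hat\CC$. Bers' sewing lemma (a quasiconformal map defined on the complement of a removable set with uniformly bounded dilatation, and extending continuously, is globally quasiconformal) then gives that the extension is quasiconformal on $\hat\CC$; alternatively, one verifies directly that $J(f_{\la})$ has zero area, or uses that $J(f_{\la})$ is a removable set for quasiconformal maps with continuous extension (true for hyperbolic singularly finite maps because $J$ is either locally connected or Cantor by Theorems~\ref{J is loc conn} and~\ref{Cantor}, and in both cases the needed removability holds). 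The resulting $\Psi$ is the required quasiconformal conjugacy.

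The main obstacle is the extension step across the Julia set: one must check that the equivariant, locally quasiconformal map built on the Fatou set has a global quasiconformal extension despite the essential singularity at infinity and the presence of poles of order $p>1$ in $J$. This is precisely where one needs the expanding estimate of Proposition~\ref{RS} (so that cell diameters shrink uniformly) together with the $f_{\la}$/$f_{\la'}$ parallelism of the preimage structure given by Lemma~\ref{preimageSC}.
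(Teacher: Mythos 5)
The paper offers no proof of this corollary beyond the remark that it ``follows by standard quasiconformal surgery techniques'' with a pointer to \cite{BF}; your proposal is precisely that standard surgery, so in spirit you are doing what the paper intends. The first two stages (Böttcher/Königs coordinates on a fundamental annulus, diffeomorphic interpolation with the topological conjugacy, equivariant pullback through the eventually periodic Fatou components, with Lemma~\ref{preimageSC} guaranteeing the branch structures of $f_{\la}$ and $f_{\la'}$ match) are correct and are the content of the citation.

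The one step that does not hold up as written is the final extension across $J$. Neither local connectivity nor being a Cantor set implies quasiconformal removability (there are non-removable Cantor sets, even of zero area), and ``zero area plus continuous extension'' is likewise not sufficient for a gluing lemma of Bers type, which requires the exceptional set to be removable in a stronger sense (points, quasi-arcs, boundaries of John domains, etc.). The standard way to close this gap --- and the one implicitly meant by the citation to \cite{BF} --- is the pullback argument: interpolate once to obtain a single globally defined $K$-quasiconformal homeomorphism $\Psi_0$ of $\hat\CC$ agreeing with the conjugacy on the initial fundamental domains, define $\Psi_{n+1}$ by lifting $\Psi_n$ through $f_{\la}$ and $f_{\la'}$ (possible since the maps are unbranched covers off the singular values and the lifts can be chosen to agree with $\Psi_n$ where it is already dynamical), and observe that the $\Psi_n$ are all $K$-quasiconformal with the same $K$, are suitably normalized, and eventually stabilize on the Fatou set. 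Compactness of normalized $K$-quasiconformal homeomorphisms then yields a $K$-quasiconformal limit, which is the desired conjugacy; the expansion estimate of Proposition~\ref{RS} is what forces the limit to be a conjugacy on $J$ as well (the region where $\Psi_n$ is not yet dynamical shrinks to $J$). This sidesteps removability entirely and is the argument you should substitute for your last paragraph.
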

 
 It follows that the $\la$ plane has a structure that reflects the dynamics; that is, it consists of components containing topologically   quasiconformally  conjugate hyperbolic maps for which the orbits of the asymptotic values tend to a the superattracting fixed point $0$,  or tend to a non-zero attracting cycle.  These components are separated by a {\em Bifurcation locus} consisting  of parameters for which the dynamics are rigid.  
 
 \begin{figure}[htb!]
\begin{center}
\includegraphics[height=3in]{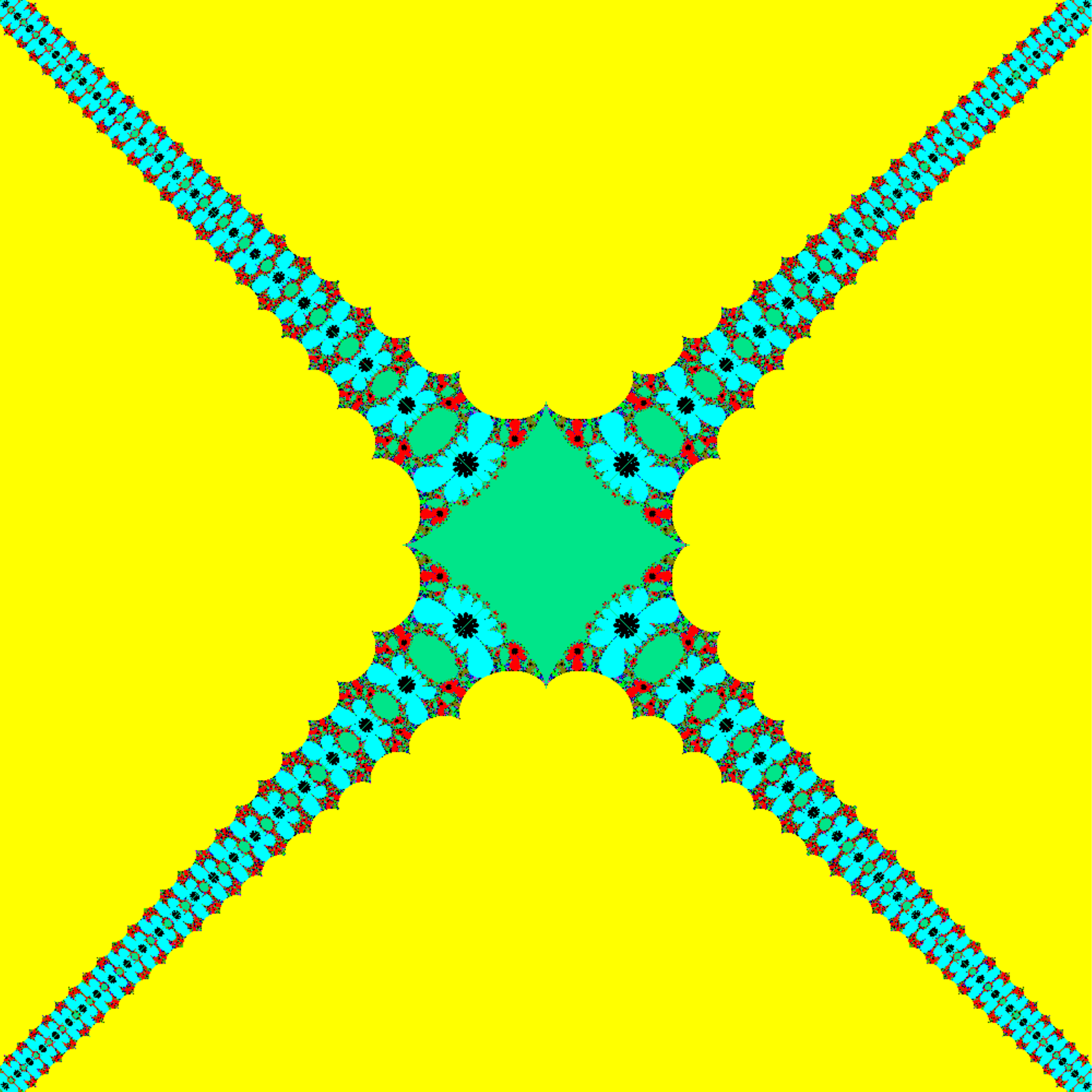}

\end{center}
\caption{ The parameter plane for $\la \tanh^3 z^2$. The green regions are capture components.  The yellow regions are period $1$ shell components; the cyan regions are period $2$; the red regions period $3$.}
\label{param32}
\end{figure}
 
 This gives us a natural division of the hyperbolic components:\\
 The hyperbolic components are divided into two categories:
\begin{itemize}
\item {\em Shell components} in which the asymptotic value  is attracted to a periodic cycle different from the origin.  By symmetry, if there are two asymptotic values, $\pm v_{\lambda}$,  either both are attracted to
the same cycle, or each is attracted to a cycle and the cycles are symmetric. We denote the collection of shell components by $\cals$ and the individual components for which $f_{\la}$ has an attracting cycle of period $n$ by $\Omega_n$.  Note that since $0$ is the only critical value, if  $\la \in \Omega_n$ the multiplier $m_{\la}$ of the  non-zero attracting cycle of $f_\la$,  is not zero.

\item {\em Capture components} in which the asymptotic value is attracted to $0$.  As we saw in section~\ref{basin of zero}, if there are two asymptotic values,  neither or both are attracted to $0$. 
We denote the set of capture components by $\mathcal C$ and divide this set into subsets $\calc_{\bf n}$, where
    \[ \mathcal{C}_\mathbf{n}=\{\lambda\ | \ n\geq 0 \text{ is the minimal } k \text{ such that } f^k_\lambda(v_\la)\in B_0 \}.  \] 
     
  Define the set of {\em centers} of  components of $\calc_n$, $n>0$, as 
\[ \mathcal{Z}_n=\{ \la\ |  \ n \geq 1 \text{ is the minimal integer such that } f^k_\lambda(v_{\la})=0\}. \] 
It is clear that these analytic equations have solutions; for example, if $n=1$ the centers are the points $|(n \pi)^{1/q}| \omega_j$.   Hence that the components of $\calc_n$ are not empty.    
    
      There is only one component $\calc_0$ and we call it the {\em central component}. It has no center because $\la=0$ is a parameter singularity.
\end{itemize}

\begin{figure}[htb!]
\begin{center}
\includegraphics[height=3in]{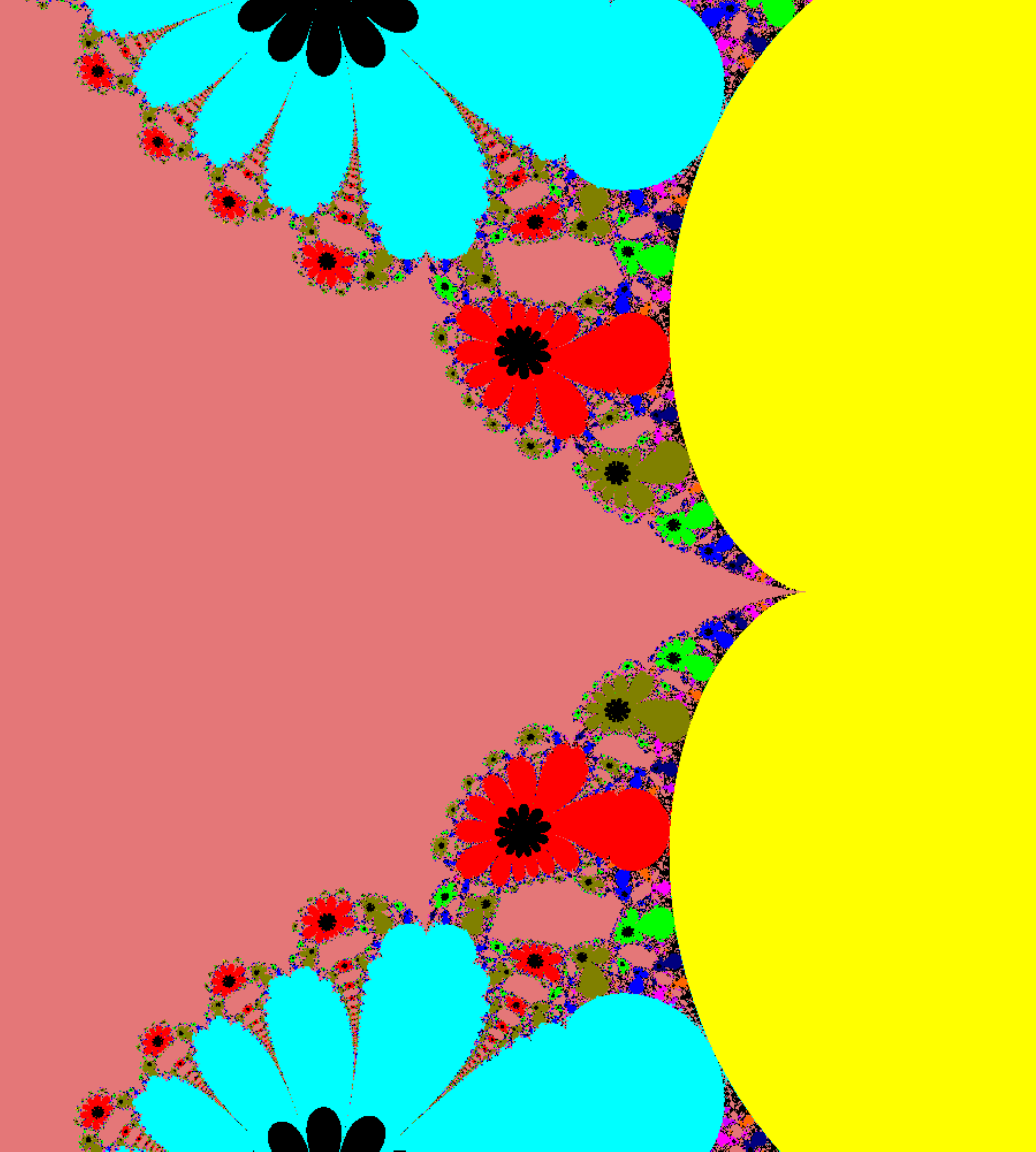}
\end{center}
\caption{A blowup of the central capture component near the cusp.  The capture components are dark pink. The shell components are colored as in figure~\ref{param32}.}
\label{central blowup}
\end{figure}

In section~\ref{sec:Cantor} we saw that the functions in the central capture component are precisely those whose Julia set is not connected, and is in fact a Cantor set.     Below we will see that this makes it quite different from the non-central capture components.  

Before we give a more detailed analysis of the hyperbolic components we discuss some general properties of the parameter plane. 

 \subsection{Symmetries in the parameter plane}\label{param symmetries}
 In section~\ref{dyn symmetries} we saw that the functions in $\calf$ exhibit a $2q$-fold symmetry if $p$ is even and a $q$-fold symmetry if $p$ is odd.   
 
The parameter plane also has symmetries and they also need to be described in terms of the parities of $p$ and $q$.

 Note first that we always have
 \begin{itemize}
\item    $ f_{-\la}(z) = -f_{\la}(z)$ so that $v_{-\la}=-v_{\la}$ and 
\item  $f_{\bar\la}(z) = \overline{f_{\la}(\bar z)}$, so that $v_{\bar\la}=\overline{v_{\la}}$ if $p$ is even and $v_{\bar\la}=\overline{v_{\la}'}$ otherwise.
\end{itemize}

These relations are reflected in the parameter plane, but exactly how depends on the parities of $p$ and $q$.  
   
  If $\la$ is real,  $f_{\la}$ leaves the real axis invariant. Also, since  $ \tan iz = i \tanh z$,  if $pq$ is odd $f_{\la}$ leaves the imaginary axis invariant and otherwise maps it to the real axis.    This is an example of a more general phenomenon relating lines in the parameter and dynamic planes: the rays $\omega_j t$, $t \in (0, \infty)$ divide the parameter plane into $2q$ sectors.   For $\la$ along these rays,   $f_{\la}$ exhibits invariance along the corresponding rays of the dynamic plane.  This is made explicit in the following. 
 
   \begin{prop}\label{syminv}
Suppose $t$ and $x$ are real, then
\begin{itemize}
\item For $j$ even,  $f_{t}(\omega_j x)$ is real and $f_{\omega_j t}$ leaves the line through $\omega_j x$ invariant.
 \item  For $j$ odd;   if $p$ is even,  $f_{\omega_j t}$ leaves the line through $\omega_j x$ invariant whereas if $p$ is odd, it maps the line through $\omega_j x$  to the perpendicular line through $i \omega_j x$.
 \end{itemize}
\end{prop}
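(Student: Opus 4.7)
The plan is a direct substitution, reducing everything to real-valued $\tan^p$. Two identities do all the work: $\omega_j^q = e^{i\pi j} = (-1)^j$, which makes $(\omega_j x)^q$ real whenever $x \in \RR$, and the oddness of $\tan$ on the real line. Plugging $z=\omega_j x$ into $f_\la(z)=\la\tan^p z^q$ gives
\[
f_\la(\omega_j x) \;=\; \la\,\tan^p\!\bigl((-1)^j x^q\bigr),
\]
and the entire proof is then a case analysis in the parities of $j$ and $p$.

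When $j$ is even, $(\omega_j x)^q = x^q\in\RR$, so $\tan^p(x^q)\in\RR$. Setting $\la=t\in\RR$ immediately gives $f_t(\omega_j x)\in\RR$, which is the first clause of the first bullet. Setting instead $\la=\omega_j t$ multiplies the real number $t\tan^p(x^q)$ by $\omega_j$, so $f_{\omega_j t}(\omega_j x)\in\omega_j\RR$; this proves the invariance of the line through the origin in direction $\omega_j$. The full line (not just a ray) is preserved because $x$ may be negative.

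When $j$ is odd, the oddness of $\tan$ gives $\tan^p(-x^q)=(-1)^p\tan^p(x^q)$. If $p$ is even the sign vanishes and the computation reduces exactly to the $j$-even case, yielding invariance of $\omega_j\RR$. If $p$ is odd an overall minus sign remains, giving $f_{\omega_j t}(\omega_j x)=-\omega_j t\tan^p(x^q)$; identifying this algebraic expression with the perpendicular line through $i\omega_j x$, as the statement asserts, is the only step that goes beyond bookkeeping. I would carry it out by combining the factorization $f=L\circ P\circ T\circ Q$ from Section~\ref{basin of zero} with the fact that $i^p=\pm i$ for $p$ odd, so that the asymptotic values $v_\la=i^p\la$ and $v_\la'=(-i)^p\la$ sit perpendicular to the parameter rays $\omega_j\RR$, matching the claimed geometric pattern.

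The main obstacle, such as it is, is this final geometric identification in the $j$-odd, $p$-odd case; all remaining content is routine sign- and parity-tracking using only $\omega_j^q=(-1)^j$ and the oddness of $\tan$ on $\RR$.
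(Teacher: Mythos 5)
Your first bullet and the $p$-even half of the second are correct, and they amount to the computation behind Lemma~\ref{conj1}(1): from $\omega_j^q=(-1)^j$ and the parity of $\tan^p$ one gets $f_{\omega_j t}(\omega_j x)=(-1)^{jp}\,\omega_j t\tan^p(x^q)$, a real multiple of $\omega_j$. But that same identity is what defeats your plan in the remaining case. For $j$ and $p$ both odd your substitution yields $f_{\omega_j t}(\omega_j x)=-\omega_j t\tan^p(x^q)$, which still lies on the line $\omega_j\RR$, not on the perpendicular line $i\omega_j\RR$. No ``geometric identification'' can turn a real multiple of $\omega_j$ into a real multiple of $i\omega_j$, so the step you defer as the ``main obstacle'' is not unfinished bookkeeping --- it is a step that cannot be carried out from the substitution $z=\omega_j x$ at all. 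The ingredient your argument never uses is $\tan(iw)=i\tanh w$, and without it the perpendicular line can never enter the computation.

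The paper's proof instead derives the proposition from Lemmas~\ref{conj1} and~\ref{conj2}, where in the $p$-odd case the conjugating map is $z\mapsto i^p\omega_j z$ rather than $z\mapsto\omega_j z$: when $q$ is also odd one computes $f_{\omega_j t}(i^p\omega_j x)=\pm\, i^p\omega_j\, t\tanh^p(x^q)$, so the invariant line carrying the orbit of the asymptotic value $v_\la=i^p\omega_j t$ is the perpendicular line $i\omega_j\RR$, on which $f_{\omega_j t}$ acts as the real map $g_{\pm t}(x)=\pm t\tanh^p x^q$. That is how the ``perpendicular line'' clause is to be understood; your literal computation in fact shows that $\omega_j\RR$ is mapped into itself, so the clause cannot be read at face value as a statement about the image of $\omega_j\RR$. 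To repair your argument, in the $p$-odd case substitute $z=i^p\omega_j x$ and apply $\tan(iw)=i\tanh w$; doing so will also surface the $q$-even subcase (where $i\omega_j\RR$ is sent onto $\omega_j\RR$ and the orbit thereafter stays on the latter line), which your case analysis misses entirely because it never tracks the parity of $q$.
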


The proof of this proposition follows from the next two lemmas.
Let $t>0$.  

 \begin{lemma}\label{conj1} Suppose $\lambda=t\omega_j$. 
\begin{enumerate}
 \item If $pq$ is even, then  $f_{\lambda}$ is conjugate to $f_t(z)=t\tan^p z^q$,
  \item whereas if $pq$ is odd,  then $f_{\la}$ is conjugate to 
  $g_t(z)=t\tanh^p(z^q)$ or to $g_{-t}(z)=-t \tanh^p(z^q)$.

 \end{enumerate}
 \end{lemma}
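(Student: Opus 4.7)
The plan is to realize each conjugacy by a linear map $\phi(z)=\beta z$ for a suitably chosen $\beta\in\CC^*$. Under such a map,
\[
\phi^{-1}\circ f_\lambda\circ\phi(z)=\frac{t\omega_j}{\beta}\,\tan^p(\beta^q z^q),
\]
so the problem reduces to choosing $\beta$ so that $\beta^q$ simplifies the inner argument and the outer coefficient $t\omega_j/\beta$ (times whatever phase the inner part contributes) becomes a real scalar.

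For case (1), with $pq$ even, I would first try $\phi(z)=\omega_j z$, which gives $\beta^q=\omega_j^q=(-1)^j$. If $p$ is even, then $\tan^p((-1)^j z^q)=\tan^p z^q$ and the coefficient collapses to $t$, yielding a direct conjugacy to $f_t$. If $p$ is odd then $q$ must be even, and the same $\beta$ produces $(-1)^{jp}t\tan^p z^q$, which is $f_t$ for $j$ even and $f_{-t}$ for $j$ odd. In the latter sub-case, a further conjugation by $\psi(z)=-z$ converts $f_{-t}$ into $f_t$, since $q$ even gives $\psi^{-1}\circ f_{-t}\circ\psi(z)=-f_{-t}(-z)=t\tan^p z^q$.

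For case (2), with $pq$ odd, I would exploit the identity $\tan(iw)=i\tanh w$. Choosing $\beta$ with $\beta^q=i$ turns $\tan^p(\beta^q z^q)$ into $i^p\tanh^p z^q$, so the conjugated map becomes $(t\omega_j\,i^p/\beta)\tanh^p z^q$. The $q$-th roots of $i$ form a coset of the $q$-th roots of unity; writing $\beta=e^{i\pi(1+4m)/(2q)}$, the condition that $\omega_j\,i^p/\beta$ be real becomes a linear congruence of the form $2m\equiv j+\tfrac{pq-1}{2}\pmod q$. Since $q$ is odd, $2$ is invertible mod $q$, so the congruence admits a unique solution $m\in\{0,\dots,q-1\}$. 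For that $\beta$ the coefficient has modulus $t$ and is real, hence equals $\pm t$, giving the asserted conjugacy to $g_t$ or $g_{-t}$.

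The main obstacle is the parity and root-of-unity bookkeeping in case (2), where the phase contributions of $\omega_j=e^{i\pi j/q}$, of $i^p=e^{i\pi p/2}$, and of the chosen $q$-th root of $i$ must be simultaneously aligned to land on the real axis. Once that congruence is solved for the correct $\beta$, the remaining verification is routine substitution; the dependence of the sign of $\pm t$ on $j$ and on the representative chosen for $\beta$ explains why both $g_t$ and $g_{-t}$ appear in the statement.
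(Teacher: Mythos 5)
Your proof is correct and takes essentially the same approach as the paper: realize the conjugacy by a linear map $z\mapsto\beta z$ and use $\tan(iw)=i\tanh w$ in the odd case. The only cosmetic difference is in case (2), where the paper simply writes down $\beta=i^p\omega_j$ (allowing $\beta^q=\pm i$ and absorbing the sign into $g_{\pm t}$), whereas you normalize $\beta^q=i$ and solve a congruence to pin down the same class of conjugators.
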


\begin{proof}
\begin{enumerate}
\item  if $p$ is even, $f_\lambda$ is conjugate to $f_t$ since
 $$f_\lambda(\omega_jz)=\omega_j t\tan^pz^q=\omega_jf_t(z).$$

  If $p$ is odd and $q$ is even,  then for $j$ even 
\[  f_{\omega_j t}(\omega_j z)=\omega_j t \tan^p(\omega_j z)^q=\omega_j t\tan^p(z^q)  \] and for $j$ odd,
\[  f_{\omega_j t}(-\omega_j z)=\omega_j t \tan^p(\omega_j z)^q= -\omega_j t\tan^p(z^q).  \]

\item  if $pq$ is odd 

\[ f_\la(i^p \omega_j z)=\omega_j t \tan^p (i^{pq}z^q)= \omega_j t \tan^p (\pm iz^q)=\pm i^p \omega_j t \tanh^p(z^q). \]
\end{enumerate}
\end{proof}

Denote the $q^{th}$ roots of $\pm i$ by $\xi_j = e^{(2j+1)\pi/2q}, j=0, \ldots 2q-1$.  For $j$ even, they are roots of $i$ and for $j$ odd, they are roots of $-i$.  
 \begin{lemma}\label{conj2}
Suppose $pq$ is even. If  $\la=\xi_j t$, then
$f_\la$ is conjugate to $g_t$.
\end{lemma}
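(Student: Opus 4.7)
The plan is to look for a linear conjugacy $\phi(z)=\eta z$, with $\eta\in\CC^*$ to be chosen, such that $\phi^{-1}\circ f_\la\circ\phi=g_t$. This mirrors the strategy used in Lemma~\ref{conj1}.

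The first step is to force the substitution to turn $\tan$ into $\tanh$. Writing $\phi^{-1}(f_\la(\phi(z)))=\eta^{-1}\la\,\tan^p(\eta^q z^q)$ and using the identity $\tan(\epsilon i w)=\epsilon i\tanh w$ for $\epsilon\in\{+1,-1\}$, this only works if we impose $\eta^q=\epsilon i$. The conjugate then simplifies to $\eta^{-1}\la(\epsilon i)^p\tanh^p z^q$. Matching with $g_t(z)=t\tanh^p z^q$ reduces the problem to the scalar equation $\eta^{-1}\la(\epsilon i)^p=t$, which, using $\la=\xi_j t$, forces the explicit choice $\eta=\xi_j(\epsilon i)^p$.

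The second step is to verify consistency: does this $\eta$ really satisfy the constraint $\eta^q=\epsilon i$ for some sign $\epsilon$? Using the identity $\xi_j^q=(-1)^j i$ together with the facts that, since $pq$ is even, $\epsilon^{pq}=1$ and $i^{pq}=(-1)^{pq/2}$, a direct calculation yields $\eta^q=\xi_j^q(\epsilon i)^{pq}=(-1)^{j+pq/2}\,i$. The two requirements on $\eta$ are therefore compatible upon taking $\epsilon=(-1)^{j+pq/2}$, and the conjugacy $\phi^{-1}\circ f_\la\circ\phi=g_t$ is established.

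The only real obstacle is sign bookkeeping. It is precisely the hypothesis that $pq$ is even that gives $\epsilon^{pq}=1$ and makes $i^{pq}$ real, which is what allows the two constraints on $\eta$ to be reconciled by a single choice of $\epsilon$. In the companion $pq$-odd case this reconciliation fails by a sign, which is exactly why Lemma~\ref{conj1} had to compare $f_\la$ with both $g_t$ and $g_{-t}$.
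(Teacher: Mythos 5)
Your proof is correct and follows essentially the same route as the paper's: a linear conjugacy $z\mapsto\eta z$ with $\eta=\pm i^p\xi_j$, using $\tan(\pm iw)=\pm i\tanh w$ to convert $\tan$ into $\tanh$. The only difference is presentational: you package the sign choices into a single parameter $\epsilon$ and verify compatibility in one computation from the parity of $pq$, where the paper runs a case analysis on the parities of $p$, $q$, $j$ and the value of $i^{pq}$ (occasionally replacing $\eta$ by $-\eta$).
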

\begin{proof}
Set $\eta=(i)^p \xi_j$. Then $\eta^q=(i)^{pq}\xi_j^q$

If $p$ is even, then $( \eta z)^q=\pm iz^q$  and \[ f_{\xi_j t}(\eta z)=\xi_j \tan^p(\eta z)^q=\xi_j t\tan^p(\pm iz^q)=(i)^p  \xi_j t\tanh^pz^q=\eta g_t(z).\]

Next suppose $p$ is odd, and $q$ is even. Suppose, furthermore, that $(i)^{pq}=1$. If $j$ is even, so that  $\xi_j^q=i$, then
\[ f_{\xi_j t}(\eta z)=\xi_j t\tan^p(\eta z)^q=\xi_j t\tan^p( iz^q)=(i)^p  \xi_j t\tanh^pz^q=\eta g_t(z).\]
If $j$ is odd, so that  $\xi_j^q=-i$, then
\[ f_{\xi_j t}(-\eta z)=\xi_j \tan^p(-\eta z)^q=\xi_j t\tan^p(- iz^q)=(i)^p  \xi_j t\tanh^pz^q=-\eta g_t(z).\]

If $(i)^{pq}=-1$, a similar calculation shows that  $f_{\xi_jt}$ is conjugate to $g_t$.

\end{proof}

{\em An example}:  See figures~\ref{param32} and~\ref{central blowup}. Consider $g_t(z)=t\tanh^3z^2$. We know that $\xi_0=e^{i\pi/4}$ is a square root of $i$. We will show that the map $f_{\xi_0 t}(z)=\xi_0 t \tan^3 z^2$ is conformally conjugate to $g_t$.

The asymptotic values of this map are $(\pm i)^3 \xi_0 t $. With the notation used above,  $\eta=i^3\xi_0$. Then the map $f_{\xi_0 t}$ is conjugate to $g_t(z)$ by $h(z)=-\eta z$ since 
\[ f_{\xi_0 t} (-\eta z)=\xi_0 t \tan^3((-\eta z)^2)=\xi_0 t (-i)^3\tanh^2(z^2)=-\eta t\tanh^3z^2.  \]

\begin{remark}

\begin{enumerate}
\item It is easy to check that the  iterates of the asymptotic values of $f_t$ or $g_t$ are all  on the real line. Moreover as in \cite{KK1} and \cite{CK1}, if $pq$ is odd, the dynamics of $g_t=t\tanh^pz^q$ and $g_{-t}=-t\tanh^pz^q$ are related:
If $g_t$ has two $n-$cycles, then $g_{-t}$ has two $n$-cycles or one $2n$-cycle depending on whether  $n$ is even or odd. Therefore, below, we will concentrate on the real maps $f_t(x)=t\tan^px^q$ and $g_t(x)=t\tanh^px^q$, where $x\geq 0$.
\item  If $pq$ is odd and $\lambda=\tau\xi_j$, $j=0, \ldots 2q-1$, then  $\la$ and the asymptotic values are on perpendicular lines and the iterates of the asymptotic values alternate between these lines.   In fact $f_\la$ goes through an interesting bifurcation process, see \cite{KK1} and \cite{CJK1} for a discussion of this when $f_t(z)=it\tan z$.
    \end{enumerate}
    \end{remark}

\begin{remark}\label{sector}  If follows directly from the above discussion that  to understand the properties of the parameter plane, it suffices to study their properties in a sector of angle width $\pi/q$.  Below, we will use either the sector $S_r$, bounded by the positive real ray through $\omega_0$ and the ray through $\omega_1$ or the sector $S_i$ bounded by the rays through  $\xi_{0}$ and $\xi_1$.
\end{remark}

\subsection{Shell Components}
\medskip
Properties of the shell components were studied for a more general class of families of meromorphic functions in \cite{FK}.   To give a more complete picture of the parameter plane for our families, and in particular to understand the deployment of the capture components,  we summarize the relevant results in \cite{FK}.  Recall that if $f$ is hyperbolic and  has a non-zero attracting cycle of period $n$, then $\la$ belongs to a hyperbolic component of the parameter space which we denote by $\Omega_n$.  We assume here for readability that there is only one asymptotic value.

\begin{thm}\cite{FK}\label{1FK}
\begin{enumerate}
\item The multiplier map $\rho: \Omega_n \rightarrow \CC^*$ is a universal covering map, so in particular, $\Omega_n$ is simply connected.   It extends continuously to $\partial\Omega_n$.  The boundary is piecewise analytic and the points on $\partial \Omega_n$ such that $m_{\la}=1$ are {\em cusps.
 \item At points of  $\partial\Omega_n$ where $m_{\la}=e^{2\pi i t/s}$, $t/s \in \QQ$, there is a standard bifurcation of the periodic cycle and the point is a common boundary point of shell components of period $s$ and $ns$.}
\item There is a unique point $\la^* \in \partial \Omega_n$ such that $f_{\la^*}^{n-1}(v_{\la^*})=\infty$; that is, the asymptotic value is a prepole of $f_{\la^*}$ and is part of a ``virtual cycle''.  That is,  if $\gamma(t)$ is asymptotic path for $v_{\la^*}$ in the dynamic plane of $f_{\la^*}$, then
\[ \lim_{t \to 1} f_{\la^*}^n(\gamma(t))=  \infty. \]

 Such a  $\la^*$ is called a {\em virtual cycle parameter of order $n$.} 
 \item 
 If $\la_n  \to \la^*$, $\la_n \in \Omega_n$, and $\la^*$ the virtual cycle parameter, then $\rho(\la_n) \to 0$; thus, $\la^*$ is called a {\em virtual center} of $\Omega_n$.
  \item Every virtual center  is a virtual cycle parameter.
\item The virtual centers of the $\Omega_n$ are accumulation points of virtual centers of components $\Omega_{n+1}$.
\end{enumerate}
\end{thm}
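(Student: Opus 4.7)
The plan is to parameterize $\Omega_n$ by the multiplier of the attracting cycle using holomorphic motions and quasiconformal surgery, and then read off the three kinds of boundary behavior (parabolic bifurcation, tangent cusp, virtual cycle) from the ways in which the cycle can degenerate. Because the only critical value $0$ is an independent super-attracting fixed point, the asymptotic value $v_\la$ is the single free singular value, which puts us in a one-complex-parameter deformation space, entirely analogous to the multiplier parameterization of hyperbolic components for quadratic polynomials.

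For (1), $\rho$ is holomorphic and nowhere zero on $\Omega_n$ by the implicit function theorem applied to $f_\la^n(z)=z$, combined with the fact that $(f_\la^n)'$ cannot vanish on the cycle since no critical value is free. To obtain surjectivity onto $\DD^*$, fix $\la_0 \in \Omega_n$ with multiplier $m_0$; for any $m \in \DD^*$, build a Beltrami coefficient on a fundamental annulus of the linearizer of the cycle that conjugates the local return map $z\mapsto m_0 z$ to $z\mapsto m z$, spread it around by the dynamics, and integrate via the measurable Riemann mapping theorem. By the rigidity supplied by Theorem~\ref{top fam} and its corollaries, the resulting meromorphic map lies again in $\calf$, yielding $\la \in \Omega_n$ with $\rho(\la)=m$; local injectivity of $\rho$ on each sheet comes from the same surgery run in reverse. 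Properness onto $\DD^*$ follows because $|m_\la|\to 1$ only at parabolic boundary points and $\rho(\la)\to 0$ only at virtual cycle parameters (analyzed below), both of which lie in $\partial\Omega_n$; hence $\rho$ is a covering. Since $\pi_1(\DD^*)=\ZZ$ but $\rho$ admits no nontrivial loop in $\Omega_n$, it is the universal covering and $\Omega_n$ is simply connected. Piecewise analyticity of $\partial\Omega_n$ comes from analyticity of $|m_\la|=1$, and cusps at $m_\la=1$ arise as tangent-bifurcation crossings with the companion period-$n$ component born on the other side.

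Part (2) is standard Leau--Fatou flower theory: at $m_\la=e^{2\pi i t/s}$ on $\partial\Omega_n$, perturbing one way preserves the attracting $n$-cycle, while perturbing the other opens $s$ attracting petals that catch the orbit of $v_\la$, producing an attracting $ns$-cycle and hence the shared boundary with $\Omega_{ns}$, plus a companion period-$s$ shell from the symmetric catch. The heart of the argument is (3)--(5). Because no critical value is ever absorbed into the cycle, the only mechanism for $\rho(\la_k)\to 0$ along a sequence $\la_k \to \la^*\in\partial\Omega_n$ is for a cycle point to be swept into an asymptotic tract, i.e.\ $f_{\la^*}^{n-1}(v_{\la^*})\to\infty$ in the virtual sense; this is precisely the virtual cycle condition and yields (4) together with the existence halves of (3) and (5). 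Conversely, at a virtual cycle parameter $\la^*$, composing the asymptotic path $\gamma_\la(t)$ near $v_\la$ with $f_\la^{n-1}$ produces a local coordinate in which $\rho$ has essential-singularity behavior of the form $\rho(\la)\sim \exp\bigl(-c/(\la-\la^*)^{1/k}\bigr)$ for an integer $k$ determined by the tract, identifying $\la^*$ as the unique boundary preimage of $0$ on each sheet of $\rho$ and completing (3) and (5).

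For (6), I would run a winding argument on small loops about a virtual center $\la^*$ of $\Omega_n$: the essential singularity of $\la \mapsto f_\la^n(v_\la)$ at $\la^*$, inherited from the essential singularity of $\rho$, forces infinitely many prepole crossings on each small loop, and each crossing gives a parameter where $f_\la^n(v_\la)$ itself is a prepole of $f_\la$, i.e.\ a virtual center of $\Omega_{n+1}$ accumulating at $\la^*$. The main obstacle throughout is the surgery step in (1): one must verify that the deformations remain inside the three-parameter family $\calf$ rather than only within the space of topologically conjugate meromorphic maps, and this is exactly where Theorem~\ref{top fam} and the rigidity corollaries are indispensable; a secondary difficulty in (6) is controlling the geometry of the prepole crossings as they accumulate at the essential singularity, which should be accessible via the Rippon--Stallard estimate already used in the dynamic-plane analysis.
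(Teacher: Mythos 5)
Before anything else, note that the paper you are reading does not actually prove Theorem~\ref{1FK}: it is cited verbatim from \cite{FK} and offered only as a summary of results established there, for a more general class of families. There is therefore no internal proof in this paper to compare against, and your write-up is in effect an attempted reconstruction of the Fagella--Keen argument; I evaluate it on those terms.

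Your global strategy --- implicit function theorem for local holomorphy of $\rho$, quasiconformal surgery on a fundamental annulus of the linearizer to realize every multiplier, and the rigidity supplied by Theorem~\ref{top fam} to keep the deformation inside $\calf$; then classifying boundary degenerations as parabolic $(|m_\la|\to 1)$ or virtual cycle $(m_\la\to 0)$ --- is the right framework and is consonant with the approach in \cite{FK}. Two genuine gaps remain, though. First, your justification that $\rho$ is the \emph{universal} covering is circular: ``$\rho$ admits no nontrivial loop in $\Omega_n$'' is equivalent to the simple connectivity you are trying to establish. The standard repair is to show the covering has infinite degree, for instance by proving that as $\la$ circles the virtual cycle parameter $\la^*$ once, $\arg\rho(\la)$ increases without bound; since the only infinite-index subgroup of $\pi_1(\DD^*)\cong\ZZ$ is trivial, infinite degree forces universality. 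Second, the asymptotic $\rho(\la)\sim\exp\bigl(-c/(\la-\la^*)^{1/k}\bigr)$ is asserted without argument, yet it is the technical heart of parts (3)--(5) and of the infinite-degree claim above; one must actually match the geometry of the asymptotic tract (controlled by the Nevanlinna/Schwarzian data) against the linearizing coordinate of the near-cycle, and this cannot be waved through. Relatedly, the \emph{uniqueness} of $\la^*$ in (3) is stated but never derived from your setup. A smaller but real point: the theorem as printed says $\rho:\Omega_n\to\CC^*$, while the multiplier of an attracting cycle lies in $\DD^*$; your silent switch to $\DD^*$ is what the argument needs, but the discrepancy should be flagged rather than elided. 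For (6), the winding/prepole-crossing heuristic is plausible, but it only becomes an argument once the essential-singularity behavior of $\la\mapsto f_\la^n(v_\la)$ near $\la^*$ is in hand, which again rests on the unproven asymptotics.
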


For the family $\calf$ we also have the following results proved in \cite{CK1}, 
\begin{thm}\cite{CK1}\label{allvirtcents}  If $\la^*$  is a virtual cycle parameter of order $n$ then it is on the boundary of a shell component $\Omega_n$, $n>1$ and is its virtual center.  
\end{thm}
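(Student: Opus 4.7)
The plan is to exhibit, for a given virtual cycle parameter $\la^*$ of order $n$, an open set $\Omega \subset \CC^*$ with $\la^* \in \partial \Omega$ on which $f_\la$ admits an attracting $n$-cycle whose multiplier tends to zero as $\la \to \la^*$ in $\Omega$. By Theorem~\ref{1FK}(4), this identifies $\la^*$ as the virtual center of a shell component $\Omega_n$ of period $n$.

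First I would analyze the virtual orbit. Set $z_i^* = f^i_{\la^*}(v_{\la^*})$ for $i = 0, \ldots, n-2$, so that $z_{n-2}^* = p^*$ is a pole of $f_{\la^*}$. For $\la$ near $\la^*$, the partial orbit $z_i(\la) := f^i_\la(v_\la)$ for $i \le n-2$ and the pole $p(\la)$ (the analytic continuation of $p^*$) both depend analytically on $\la$, with $z_{n-2}(\la) - p(\la) \to 0$ as $\la \to \la^*$. Because $f_\la$ has a pole of order $p$ at $p(\la)$, the $(n-1)$-st iterate $z_{n-1}(\la) := f_\la(z_{n-2}(\la))$ tends to infinity, and as $\la$ traces a small loop around $\la^*$, its image winds $p$ times and sweeps through every asymptotic tract of $v_\la$.

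Next I would set up a fixed-point equation in the asymptotic tract $A_\la$ of $v_\la$. On $A_\la$ there is a uniformizing coordinate $u = \psi_\la(z)$ in which $f_\la(z) = v_\la + e^{-u}$ is the normal form for a logarithmic singularity; in particular $f'_\la(z)$ decays exponentially in $\Re u$. A period-$n$ cycle near the virtual cycle has its point nearest infinity in $A_\la$, so writing that point as $\psi_\la^{-1}(u)$ the cycle condition becomes
\[
F_\la(u) \;:=\; \psi_\la\bigl(f^{n-1}_\la(v_\la + e^{-u})\bigr) \;=\; u.
\]
Let $\Omega$ be the set of $\la$ for which $F_\la$ is a well-defined self-map of a half-plane $\{\Re u > R\}$. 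This is an open set with $\la^* \in \partial \Omega$, determined by the requirement that $z_{n-1}(\la)$ enter $A_\la$; by the $p$-fold pole structure it has a cusp-like shape (a union of $p$ sectorial petals) with vertex at $\la^*$. For $\la \in \Omega$ close to $\la^*$, the exponential decay of $f'_\la$ deep in $A_\la$ makes $F_\la$ strictly contracting, and the Banach fixed-point theorem produces a unique fixed point $u(\la)$ depending analytically on $\la$, giving a genuine attracting $n$-cycle of $f_\la$.

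Finally I would estimate the multiplier $m(\la) = \prod_{i=0}^{n-1} f'_\la(w_i(\la))$ of the new cycle $(w_0(\la), \ldots, w_{n-1}(\la))$. The factor $f'_\la(w_{n-1}(\la))$ is exponentially small in $\Re u(\la) \to \infty$; the potential blow-up of $f'_\la(w_{n-2}(\la))$ near the pole is only a polynomial power of $|z_{n-2}(\la) - p(\la)|^{-1}$, which by the pole normal form and the defining condition for $\Omega$ is dominated by the exponential decay. The remaining factors stay bounded near $\la^*$. Thus $m(\la) \to 0$ as $\la \to \la^*$ in $\Omega$, and Theorem~\ref{1FK}(4) identifies $\la^*$ as the virtual center of the shell component $\Omega_n$ containing $\Omega$. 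The hard part will be the careful bookkeeping of the $p$-fold branching of $f_\la$ near $p(\la)$ combined with the universal-covering behavior of $f_\la$ on $A_\la$, both to verify that $\Omega$ is genuinely open with $\la^*$ on its boundary and to show that the contraction constants for $F_\la$ remain uniform as $\la \to \la^*$.
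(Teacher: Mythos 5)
The paper does not actually prove this statement: Theorem~\ref{allvirtcents} is quoted from \cite{CK1}, so there is no in-paper argument to compare against. Judged on its own merits, your outline is the natural one for this kind of result and is essentially correct. The strategy --- track the virtual orbit, exploit the order-$p$ pole to show $z_{n-1}(\la)$ escapes into an asymptotic tract for a full set of directions of $\la$ near $\la^*$, pass to the logarithmic coordinate $u=\psi_\la(z)$ in which $f_\la$ has the normal form $v_\la+e^{-u}$, and close up the cycle by a contraction-mapping argument in a half-plane $\{\Re u>R\}$ --- is the standard Eremenko--Lyubich-style technique for functions with logarithmic singularities, and the polynomial-vs-exponential comparison of the factor $f_\la'(w_{n-2})$ (which blows up like a fixed power of $|w_{n-1}|$) against $f_\la'(w_{n-1})$ (which decays like $e^{-c|w_{n-1}|^q}$) is exactly what forces $m(\la)\to 0$.

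A few points are worth tightening. First, you do not actually need to invoke Theorem~\ref{1FK}(4): having produced an open $\Omega$ on which $f_\la$ has an attracting $n$-cycle with multiplier tending to $0$ as $\la\to\la^*$, the classification of hyperbolic components places $\Omega$ inside a shell component $\Omega_n$, and then part~(3) of Theorem~\ref{1FK} (uniqueness of the virtual cycle parameter on $\partial\Omega_n$) forces $\la^*$ to be that unique point, whence it is the virtual center by definition; your estimate $m(\la)\to 0$ is an independent re-derivation of part~(4), not an application of it. Second, the claim that $z_{n-1}(\la)$ ``winds $p$ times'' presumes that $\la\mapsto z_{n-2}(\la)-p(\la)$ has a simple zero at $\la^*$; you should either justify simplicity or, better, observe that the winding number is some positive integer (the function is non-constant and analytic), which is all you need to guarantee a nonempty sector of $\la$'s for which $z_{n-1}(\la)$ lies in a tract of $v_\la$. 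Third, you should say a word about why the cycle you construct has exact period $n$ (it cannot have smaller period because one of its points is arbitrarily deep in a tract while the others stay in a compact set near the finite virtual orbit). The ``hard parts'' you flag --- openness of $\Omega$, uniformity of the contraction constant, and the bookkeeping of the $p$-fold branching at $p(\la)$ together with the universal covering on $A_\la$ --- are genuinely where the technical work lies, and acknowledging them is appropriate for an outline.
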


\begin{thm}\cite{CK1}\label{allbdd} The only unbounded hyperbolic components in the $\la$ plane are the $2q$ shell components of order $1$.  Moreover, these are the only shell components of order $1$.  They are pairwise tangent to the lines $\omega_j$, $j=0, \ldots, 2q-1$.
\end{thm}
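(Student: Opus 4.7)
My plan is to parameterize the nonzero fixed points of $f_\la$ by their location. If $z_0\ne 0$ satisfies $\la\tan^p z_0^q = z_0$, then setting $w = z_0^q$ a direct calculation gives
\[ \la = \frac{z_0}{\tan^p w}, \qquad \rho(\la) := f_\la'(z_0) = \frac{2pq\,w}{\sin 2w}. \]
I would use $w\in\CC$ as the natural parameter on the locus of non-zero fixed points and analyze the sub-locus where $|\rho|<1$.

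For the existence of the $2q$ unbounded period-$1$ shell components I would invoke the conjugation lemmas of Section~\ref{param symmetries}. Along each of $2q$ symmetry rays (the rays $\omega_j$ when $pq$ is odd, the rays $\xi_j$ when $pq$ is even), $f_\la$ is conformally conjugate to $\pm t\tanh^p x^q$ for a real parameter $t$. The latter is a bounded monotone real-analytic map, so for $|t|$ large it has an attracting non-zero fixed point near $x=\pm t$ whose multiplier carries a $\sech^2(x_0^q)$ factor and so tends to $0$ exponentially. Because $0$ is the only critical value of $f_\la$, this attracting fixed point must capture the asymptotic value, so the parameter lies in an order-$1$ shell component. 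Applying the $2q$-fold symmetry of $\calf$ (Proposition~\ref{syminv}) yields $2q$ distinct unbounded order-$1$ components, which by Theorem~D exhaust all unbounded hyperbolic components.

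For the converse, suppose $f_\la$ has any non-zero attracting fixed point $z_0$. The condition $|\rho|\le 1$ reads $|\sin 2w|\ge 2pq|w|$; since $|\sin 2w|\le\cosh 2|\Im w|$, this forces $|\Im w|$ to grow with $|w|$ at least like $\tfrac{1}{2}\log|w|$ and places $w$ in the region where $\tan w$ is close to $\pm i$. Consequently $|\la| = |w|^{1/q}/|\tan^p w|$ is of order $|w|^{1/q}$, so $|\la|$ must exceed a universal threshold and the order-$1$ shell components are precisely the unbounded ones identified above.

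The tangency to the rays $\omega_j$ is the delicate step. On the boundary $|\rho|=1$ the relation $|\sin 2w|=2pq|w|$ yields $|\Im w|\sim\tfrac{1}{2}\log|w|$, so the boundary in the $w$-plane hugs the real axis at a logarithmic rate. Transporting via $\la = w^{1/q}/\tan^p w$, with $\tan^p w\to(\pm i)^p$, confines the $\la$-boundary to logarithmically narrow neighborhoods of rays in the parameter plane. The main obstacle is promoting this asymptotic confinement to genuine tangency at the specific rays $\omega_j$: one must track subleading corrections in the expansion of $\la(w)$ and match the inverse branches $w\mapsto w^{1/q}$ for the two shell components meeting a common ray, showing that they produce identical leading-order expansions on opposite sides. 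This is a careful asymptotic computation rather than a new idea, but it is where all the bookkeeping lives.
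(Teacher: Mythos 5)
First, a point of reference: the paper does not prove Theorem~\ref{allbdd} at all --- it is imported from \cite{CK1}, where the argument goes through the structure theory of shell components (the multiplier map on $\Omega_n$ is a universal covering, virtual centers are virtual cycle parameters, and the higher--order components are trapped between explicit curves asymptotic to the Julia directions). So your proposal is necessarily a reconstruction. The local part of it is sound: the formulas $\la = w^{1/q}/\tan^p w$ and $\rho = 2pq\,w/\sin 2w$ are correct, and since $|\sin 2w|^2=\sin^2(2\Re w)+\sinh^2(2\Im w)$, the condition $|\sin 2w|\ge 2pq|w|$ does force $|w|$ above an absolute threshold and $|\Im w|\gtrsim \tfrac12\log|w|$, which yields both the attracting fixed points along the $2q$ symmetry rays for large $|t|$ (consistent with Lemmas~\ref{conj1}--\ref{conj2} and Corollary~\ref{parabbound}) and a uniform lower bound on $|\la|$ over the period-one locus.

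There are, however, genuine gaps. (1) You invoke Theorem D to conclude that your $2q$ components ``exhaust all unbounded hyperbolic components,'' but Theorem D is the summary statement whose proof rests on Theorem~\ref{allbdd}; the citation is circular. The substantive half of the theorem is exactly that every \emph{other} hyperbolic component --- shell components of period $n\ge 2$ and all capture components --- is bounded, and a fixed-point computation at period one says nothing about these: one must control where $f_\la^{n-1}(v_\la)$ can lie as $|\la|\to\infty$, which in \cite{CK1} is done by confining those components between curves asymptotic to the Julia directions. (2) The count ``exactly $2q$'' is not pinned down: the region $\{|2pq\,w/\sin 2w|<1\}$ has two components whose $q$-th-root preimages give $2q$ sectors in the $z_0$-plane, but you still need that their images under $z_0\mapsto z_0/\tan^p z_0^q$ are pairwise disjoint and are full components of the period-one locus; for $p$ odd a single $f_\la$ may carry two symmetric attracting fixed points $\pm z_0$, so distinct sectors could a priori land in the same parameter component. (3) The tangency claim is left open by your own admission; note moreover that your asymptotics place the separating directions at $(\mp i)^p\omega_j$ rather than at $\omega_j$, and reconciling this with the stated lines for all parities of $p$ and $q$ is precisely the bookkeeping that is missing.
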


and 
\begin{thm}\label{virtcentsconv} Each virtual cycle parameter  of order $n>1$ is the virtual center of $2q$ shell components of order $n+1$.These shell components are pairwise tangent to vectors that divide a circle centered at the point into $2q$ equal parts. 
\end{thm}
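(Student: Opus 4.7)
The plan is to combine a local Laurent expansion of $\la \mapsto f_\la^{n-1}(v_\la)$ at $\las$ with a construction of attracting $(n+1)$-cycles in each of the $2q$ parameter sectors suggested by the asymptotic-tract structure at infinity. First, the hypothesis $f_{\las}^{n-1}(v_{\las}) = \infty$ says that $z^* := f_{\las}^{n-2}(v_{\las})$ is a pole of $f_{\las}$. Because the pole set of $f_\la=\la\tan^p z^q$ is $\{z:z^q=(k+1/2)\pi\}$, independent of $\la$, $z^*$ is a pole of $f_\la$ of order $p$ for every $\la$. Writing $g(\la)=f_\la^{n-2}(v_\la)$, holomorphic at $\las$ with $g(\las)=z^*$ and generically $g'(\las)\neq 0$, the Laurent expansion of $\tan^p(z^q)$ at $z^*$ gives
\[
f_\la^{n-1}(v_\la) \;=\; \frac{B}{(\la-\las)^p}\bigl(1+o(1)\bigr),\qquad \la\to\las,
\]
for some nonzero constant $B$. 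Hence as $\la$ traces a small circle around $\las$, $f_\la^{n-1}(v_\la)$ winds $p$ times around $\infty$, crossing each of the $2q$ Julia directions exactly $p$ times.

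For $\la$ near $\las$, let $j(\la)\in\{0,\ldots,2q-1\}$ index the asymptotic tract containing $f_\la^{n-1}(v_\la)$ and let $w_j$ be the corresponding asymptotic value (equal to $v_{\las}$ or $v'_{\las}$ by the parity rule of section~\ref{dyn symmetries}). To produce each $\Omega_{n+1}^{(j)}$ I would argue as follows: because the orbit of $v_\la$ passes very close to infinity in the $j$-th tract at step $n-1$, Proposition~\ref{RS} gives a univalent inverse branch of $f_\la$ sending a small neighborhood of $w_j$ deep into that tract, and this branch is a strong contraction. Composing it with the univalent inverse branches of $f_\la$ along the finite part of the virtual $n$-cycle at $\las$ yields a holomorphic contraction of a disk neighborhood of $w_j$ into itself whose unique fixed point is an attracting periodic point of $f_\la$ of period $n+1$ with one cycle point near $\infty$ in the $j$-th tract. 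The parameter set on which this contraction argument succeeds, the cycle is attracting, and $v_\la$ lies in its basin is open and, by the quasiconformal parametrization techniques of \cite{FK}, forms a single simply connected shell component $\Omega_{n+1}^{(j)}$ with $\las\in\partial\Omega_{n+1}^{(j)}$. The $p$-fold local rotational symmetry at the pole $z^*$ identifies the $p$ apparent sub-sectors per $j$ with the same dynamical cycle, reducing the naive count $2pq$ to the stated $2q$.

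The separating rays in $\la$-space where $f_\la^{n-1}(v_\la)$ crosses a Julia direction, modulo this $p$-fold identification, are $2q$ equally spaced directions at $\las$, one between each pair of consecutive components $\Omega_{n+1}^{(j)}$, which gives the tangent-vector statement. Along each $\Omega_{n+1}^{(j)}$ the attracting cycle has a point escaping into an asymptotic tract, so by the same derivative estimate used in the proof of Theorem~\ref{1FK}(4) the multiplier $\rho(\la)\to 0$ as $\la\to\las$, and $\las$ is the virtual center of each $\Omega_{n+1}^{(j)}$.

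The main obstacle is the middle paragraph: rigorously producing the $(n+1)$-cycle, verifying that $v_\la$ is attracted to it, and identifying the $p$ apparent sub-sectors for a given $j$ with a single shell component rather than $p$ distinct ones. This requires simultaneously handling the pole of $f_\la$ at $z^*$, the asymptotic tract at infinity, and the $p$-fold symmetry structure, and is exactly where Proposition~\ref{RS} and the surgery machinery of \cite{FK} do most of the work.
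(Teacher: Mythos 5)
First, a point of comparison: this paper does not actually prove Theorem~\ref{virtcentsconv} --- it is listed among the results imported from \cite{CK1} (``For the family $\calf$ we also have the following results proved in \cite{CK1}\dots''), so there is no internal argument to measure yours against. Judged on its own terms, your outline begins correctly: $z^*=f_{\las}^{n-2}(v_{\las})$ is a pole of $f_\la$ of order $p$, so (granting $g'(\las)\neq 0$, which you only assume ``generically'' even though the theorem is asserted for \emph{every} virtual cycle parameter of order $n>1$) the map $\la\mapsto f_\la^{n-1}(v_\la)$ has a pole of order $p$ at $\las$, and the intended mechanism --- an attracting $(n+1)$-cycle with one point deep in an asymptotic tract, whose multiplier tends to $0$ as $\la\to\las$ --- is the right one.

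The genuine gap is the step that converts your count of $2pq$ parameter sub-sectors into the stated $2q$ components. Because $f_\la^{n-1}(v_\la)$ winds $p$ times around $\infty$ as $\la$ winds once around $\las$, it enters each of the $2q$ asymptotic tracts $p$ times, producing $2pq$ sub-sectors near $\las$. Your proposed fix --- that the ``$p$-fold local rotational symmetry at the pole $z^*$'' identifies the $p$ sub-sectors for a given tract with ``the same dynamical cycle'' --- does not make sense: those sub-sectors consist of genuinely distinct parameters, hence distinct maps $f_\la$, and no symmetry of the family $\calf$ identifies them. One would need to show either that the $p$ sub-sectors attached to a given tract lie in a single connected shell component, or that the tangency structure at $\las$ is governed by something other than the naive winding count; this is precisely the nontrivial content of the theorem, and without it your argument yields $2pq$ equally spaced tangent directions rather than $2q$. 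A secondary but real error: Proposition~\ref{RS} is an expansion estimate on the Julia set of a hyperbolic map and says nothing about inverse branches near asymptotic values; the strong contraction of the branch of $f_\la^{-1}$ carrying a neighborhood of $w_j$ deep into the tract comes from the logarithmic-singularity (universal covering) structure of the tract, as exploited in \cite{FK}, not from the Rippon--Stallard inequality.
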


These theorems show that the shell components are in many ways analogous to the components of the Mandelbrot set for polynomials. See figure~\ref{param32}.  

\medskip

 \section{Capture Components}
 \label{capture}
 The new results on parameter spaces in this paper concern the capture components.  Although the central and non-central components have some properties in common;  for example, by theorem~\ref{allbdd}, they are all bounded; there are important differences.    
 We divide the discussion between the central and non-central components.  We assume throughout that $\la$ is in the chosen sector.

 \subsection{Connectivity}
 
 We prove in this section that the capture components are simply connected.  We show it first for the central capture component and then for the non-central ones.  These results, together with Theorem~\ref{1FK} show that all the hyperbolic components are simply connected.

\subsubsection{The central component}
 Recall that by definition,  
 \[ \mathcal{C}_\mathbf{0}=\{\lambda \ |\  v_{\la}, v_{\la}'\in B_0\}. \] 
 
\begin{thm}\label{C0}
The set $\mathcal{C}_\mathbf{0}\cup \{0\}$ is connected and simply connected.
\end{thm}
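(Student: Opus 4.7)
My strategy is to construct a proper holomorphic map from $\calc_\mathbf{0}$ to the punctured disk $\DD^* := \DD \setminus \{0\}$ using the B\"ottcher coordinate evaluated at the asymptotic value, and then read off the topology of $\calc_\mathbf{0} \cup \{0\}$ from this map.

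First I would define the map. For each $\la \in \calc_\mathbf{0}$, both asymptotic values lie in $B_0$, so by Theorem~\ref{dichotomy} the basin $B = B_0$ is completely invariant. The remark following Lemma~\ref{allin} shows that the B\"ottcher coordinate $\phi_\la$ of Theorem~\ref{Bott} extends injectively to a domain containing $v_\la$, with $|\phi_\la(v_\la)| < 1$. Since $\phi_\la$ is determined only up to a $(pq-1)$-st root of unity, I would set
\[\Psi(\la) \;=\; \phi_\la(v_\la)^{pq-1} \;\in\; \DD^*,\]
which is single-valued. Holomorphic dependence of $\Psi$ on $\la$ follows from the infinite product formula in Theorem~\ref{Bottchercoor} together with the holomorphic dependence of $v_\la = i^p\la$, and $\Psi$ is nowhere zero on $\calc_\mathbf{0}$ because $v_\la = 0$ would force $\la = 0$.

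Second, I would prove that $\Psi$ is proper onto $\DD^*$. If a sequence $\la_n \in \calc_\mathbf{0}$ converges to a point $\la_\infty \in \partial\calc_\mathbf{0} \cap \CC^*$, then $v_{\la_n}$ must approach $\partial B_0 \subset J(f_{\la_n})$ (either by leaving $B_0$ or because the basin shrinks to expose it); in the B\"ottcher picture this forces $|\phi_{\la_n}(v_{\la_n})| \to 1$ and hence $|\Psi(\la_n)| \to 1$.

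Third, I would conclude. A proper holomorphic map between connected Riemann surfaces is a finite-sheeted branched covering onto its image, so $\Psi:\calc_\mathbf{0} \to \DD^*$ is such a cover; since any branching occurs over a discrete set in $\DD^*$ and the only connected finite-sheeted unramified covers of $\DD^*$ are given by $z\mapsto z^d$, it follows (after handling any ramification, of which I expect there is none generically) that $\calc_\mathbf{0}$ is conformally a punctured disk. To identify the puncture, note that as $\la \to 0$, $v_\la = i^p\la \to 0$ and $\phi_\la$ is close to its linearization on small disks, so $\Psi(\la) \to 0$; this pins the puncture at $\la = 0$. Extending $\Psi$ continuously by $\Psi(0) = 0$ exhibits $\calc_\mathbf{0} \cup \{0\}$ as a topological disk, hence connected and simply connected.

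The main obstacle is the properness step: rigorously showing $|\phi_\la(v_\la)| \to 1$ as $\la \to \partial\calc_\mathbf{0}$. This requires a careful analysis of how the injective domain of the B\"ottcher coordinate depends on the parameter, combined with identification of exactly which bifurcation occurs on $\partial\calc_\mathbf{0}$ (e.g.\ the asymptotic value colliding with the Julia set or with a prepole). This bifurcation analysis is the technical heart of the proof.
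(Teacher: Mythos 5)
Your strategy is essentially the paper's: it too studies the position of the asymptotic value in B\"ottcher coordinates, $\la\mapsto\phi_\la(v_\la)$, proves holomorphy from the infinite--product formula, proves properness, and observes that the map extends over $\la=0$ with a zero of order $pq$, exhibiting $\calc_0\cup\{0\}$ as a proper degree-$pq$ cover of $\DD$ branched only at $0$; note that the degree is pinned to $pq$, and connectedness follows, because every component must contain a preimage of $0\in\DD$ and $\la=0$ is the unique such preimage. The only substantive difference is how the $(pq-1)$-st--root ambiguity is handled: you raise to the power $pq-1$, while the paper passes to the variable $\mu=\la^{1/(pq-1)}$ on a sector and then extends by the symmetries of the parameter plane; both devices work. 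The properness step that you leave open is closed in the paper by a short perturbation argument: for a boundary parameter no asymptotic value lies in $B_0$, so the B\"ottcher map is a conformal bijection of $B_0$ onto all of $\DD$, and the single-valuedness of its inverse on $D(0,1-\epsilon)$ persists under small perturbation of the parameter, which forces $|\phi_\la(v_\la)|>1-\epsilon$ as $\la$ tends to $\partial\calc_0$. One caution about your final step: ``no ramification generically'' is not sufficient, since even one critical point of $\Psi$ in $\calc_0$ would, by Riemann--Hurwitz, make $\calc_0\cup\{0\}$ multiply connected; the paper itself only asserts that the cover is branched solely at $0$, and the honest justification is the local quasiconformal surgery used for the non-central components in Theorem~\ref{Cn}, which produces a local inverse of $\la\mapsto\phi_\la(v_\la)$ and hence excludes critical points away from $\la=0$.
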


\begin{proof} 
In theorem~\ref{Bott}, we defined the B\"ottcher coordinate $\phi_{\la}$ of $f_{\la}$ in terms of the B\"ottcher coordinate $\phi_h$ of the conjugated monic map $h_{\mu}$ where $\mu=\la^{1/(pq-1)}$ for some choice of the root.    It is easier to do the computations for using that coordinate.  We assume the roots are taken so that $\mu$ is also in the chosen sector. 

 By lemma~\ref{allin},  the pullbacks of each can be extended injectively until they  meet an asymptotic value: $v_{\la}$ for $f_{\la}$ and $w_{\mu}=i^p \mu^{pq}$ for $h_{\mu}$.  We can therefore define  maps 
 
\[ \varphi : \calc_0 \cup \{0\} \rightarrow \DD \mbox{ by } \la \mapsto \phi_{\la}(v_{\la}), \varphi(0)=0 \]  and 
\[ \varphi_{\mu} : \calc_{0, \mu} \cup \{0\} \rightarrow \DD \mbox{ by } \mu \mapsto \phi_{h}(w_{\mu}), \varphi_{\mu}(0)=0 \] 
where  $ \calc_{0, \mu}= \{ \mu = \la^{1/(pq-1)},  \, \la \in \calc_0.\}$.

To see that $\varphi$ is holomorphic with respect to $\la$,  it is enough to see that $\varphi_{\mu}$ is holomorphic in $\mu$.  
Note that $h_\mu(w_{\mu})=\mu^{pq}\tan^p(w_{\mu}^q)$ is holomorphic in $\mu$  and by lemma \ref{preimageSC}, its $n^{th}$ iterate, $h_{\mu}^n(w_{\mu})$ is also holomorphic.  It   never takes the values  $\sqrt[q]{k\pi}$ because if it did, $\la$ would be in a non-central component.  Thus 
\[  \frac{h_\mu^{n+1}(w_{\mu})}{( h_\mu^{n}(w_{\mu}))^{pq} }=\frac{\mu^{pq}\tan^p (h^n_\mu(w_{\mu})/\mu)^q}{(h^n_\lambda(w_{\mu})^{pq}} \]   is holomorphic and never equal to $0$.  Therefore

\begin{align}\label{Botch factor}
\big(\frac{h_\lambda^{n+1}(w_{\mu})}{( h_\lambda^{n}(w_{\mu}))^{pq} }\big)^{{pq}^{-(n+1)}}=1+\cdots
\end{align}  is holomorphic in $\mu$, and this implies that $\varphi(\mu_{\la})$ is holomorphic in $\mu$.

Second, the map $\varphi(\mu)$ is a proper map. Consider a point $\mu_0\in \partial \calc_{0,\mu}$;  the B\"ottcher map $\phi_{\mu_0}$ is a conformal map from the immediate basin of $h_{\mu_0}$ onto the unit disk. In particular, for any $\epsilon>0$, $\phi_{\mu_0}^{-1}$ is a single valued function on the disk of radius $1-\epsilon$. This property is preserved under any small perturbation of $\mu_0$, and so holds for any $\mu\in \calc_{0,\mu}$ sufficiently close to $\mu_0$ which implies that  $\varphi$ is a proper map from  $\calc_{0,\mu} \cup \{0\}$ onto $\mathbb{D}$.

By equation~(\ref{Botch factor}), in a neighborhood of $0$,
\[ \varphi(\mu)=w_{\mu}(1+\cdots) = i^p\mu^{pq}(1+ \cdots), \] thus $0$ is removable. That is, the map $\varphi$ can be extended as proper surjective map from $\mathcal{C}_\mathbf{0}^\mu \cup \{0\}\to \mathbb{D}$ of degree $pq$ branched only at $0$.
 
Since we are working in a sector, the roots are well defined  and  the map $\varphi$ is well defined on that sector.  We extend it to all of $\calc_0$ by the symmetry relations.  
\end{proof}

Just as in the dynamic setting, we use the map $\varphi$ to define parameter rays and gradient curves in $\calc_0$.

\begin{defn}\label{param rays} The inverse images in $\calc_0$ the rays in $\DD$, $\calr_\theta=\varphi^{-1}(s e^{2\pi i \theta})$, $s \in (0,1)$, and $\theta$ fixed in $ [0,1)$,  are rays in $\calc_{0}$.   If $\theta$ is rational, we say the ray $\mathcal R_\theta$ is rational. 
\end{defn}

The analogue of the rate of escape to infinity in the outside of the Mandelbrot set is $|\varphi(v)|$.   

A direct  corollary to theorem~\ref{Cantor} is 
\begin{cor}
 If $\lambda\in \mathcal{C}_{0}$, the Julia  set $J_{\lambda}$  is a Cantor set and the action of $f_{\lambda}$ on $J_{\lambda}$ is conjugate to the one sided shift on a countable alphabet. Moreover, if $\RR^+$ is the positive real axis the conjugacy is well defined on $\calc_{\mathbf{0}} \setminus \RR^+$.
\end{cor}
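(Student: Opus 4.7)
The first two assertions are immediate from Theorem~\ref{Cantor}. By definition of $\mathcal{C}_0$, for every $\lambda \in \mathcal{C}_0$ both asymptotic values lie in the immediate basin $B_0$, which is exactly the hypothesis of that theorem. Hence for each $\lambda \in \mathcal{C}_0$ the Julia set $J_\lambda$ is a Cantor set and Theorem~\ref{Cantor} provides a topological conjugacy $\Xi_\lambda : \Sigma \to J_\lambda$ between $f_\lambda |_{J_\lambda}$ and the one-sided shift on the countable alphabet $\Sigma$ built from the enumeration of poles.

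The content of the ``moreover'' clause is to upgrade the family $\{\Xi_\lambda\}$ to a single conjugacy depending continuously on the parameter throughout $\mathcal{C}_0 \setminus \mathbb{R}^+$. For this I would observe first that the poles $p_{\pm k^j}$ are independent of $\lambda$, being the $q^{th}$ roots of $k\pi + \pi/2$. For each finite word $w = k^j_l \cdots k^j \in \Sigma$ I would define the prepole $p_w(\lambda)$ as the composition of the corresponding local inverse branches of $f_\lambda$ applied to the terminal pole $p_{k^j}$. Each such $p_w(\lambda)$ is a simple root of the analytic equation $f_\lambda^n(z) = \infty$, so by the implicit function theorem $\lambda \mapsto p_w(\lambda)$ is holomorphic on any simply connected subdomain of $\mathcal{C}_0$ on which the chosen inverse branches are single-valued. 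Setting $\Xi_\lambda(w) := p_w(\lambda)$ on finite words and extending to infinite words by continuity, I would use the uniform shrinking of the cylinder sets $A_{k^j_l \cdots k^j}(\lambda)$ established in the proof of Theorem~\ref{Cantor}. The expansion estimate of Proposition~\ref{RS} (Rippon--Stallard) holds with constants that are locally uniform in $\lambda$ on compact subsets of the hyperbolicity locus, so the shrinking is uniform in $\lambda$ on compacta and the extension $(w, \lambda) \mapsto \Xi_\lambda(w)$ is jointly continuous.

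The delicate point — and the main obstacle — is to justify that the positive real axis is exactly the cut that must be removed. The symbolic coding of Theorem~\ref{Cantor} rests on the B\"ottcher coordinate of Theorem~\ref{Bott}, whose definition requires a choice of $(pq-1)^{st}$ root of $\lambda$, together with the $\omega_j$-enumeration of the $2q$ asymptotic tracts. As $\lambda$ traverses a loop around the origin in $\mathcal{C}_0 \cup \{0\}$, these root choices are multiplied by nontrivial roots of unity, so the induced labelling of the alphabet $\Sigma$ is cyclically permuted and the family $\Xi_\lambda$ picks up nontrivial monodromy. Slitting $\mathcal{C}_0$ along $\mathbb{R}^+$ produces a simply connected domain avoiding the parameter singularity at $0$ and compatible with the fundamental sector $S_r$ of Remark~\ref{sector}, on which a single branch of the root and a consistent labelling of asymptotic tracts can be chosen; the family $\Xi_\lambda$ then extends as a single-valued continuous conjugacy on that domain. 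The technical step to verify is that no further branch cut is needed inside $\mathcal{C}_0 \setminus \mathbb{R}^+$, which amounts to checking that along any loop in this slit domain the relevant inverse branches of $f_\lambda$ return to themselves.
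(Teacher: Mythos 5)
Your approach matches the paper's: both derive the Cantor-set and shift-conjugacy statements directly from Theorem~\ref{Cantor}, and both handle the ``moreover'' clause by observing that $\calc_0 \setminus \RR^+$ is simply connected (by Theorem~\ref{C0}), so the holomorphic motion of the Julia set --- which you realize explicitly via the holomorphically varying prepoles and the locally uniform Rippon--Stallard shrinking, and which the paper invokes as the standard fact that the Julia set of a hyperbolic map moves holomorphically and injectively --- admits a single-valued continuous conjugacy on the slit domain. Once simple connectivity is in hand, the monodromy theorem makes your final ``technical step to verify'' automatic, so no further branch cut can arise; that closing worry is unnecessary.
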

\begin{proof}  We saw in theorem~\ref{Cantor} that for any fixed $\la \in \calc_{0}$, $B_0$ is completely invariant and $J_{\la}$ is a Cantor set.   Since the asymptotic values are attracted to the origin, $f_{\la}$ is hyperbolic and  by standard arguments (see e.g. \cite{McM, KK1}) the Julia set moves holomorphically and injectively.  Because we removed $\RR^+$ from $\calc_0$,  $\la$ varies in a simply connected domain and so the map $\Xi$ is well defined in this domain. 
\end{proof}

\begin{remark} A component of the parameter space for which the Julia set is a Cantor set and the dynamics are conjugate to a shift map is often called the {\em shift locus};  therefore we will call $\calc_0$  the {\em shift locus for $\calf$.}  
\end{remark}

\medskip
Next we show that 
there is a disk inside the central component

\begin{lemma}\label{disk}  Let $t_*= (\pi/4)^{1/q}.$  The disk $D=D(0, t_*)$ is contained in the central capture component $\calc_0$. \end{lemma}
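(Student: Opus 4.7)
The plan is to exhibit $\overline{D(0, t_*)}$ as a forward-invariant closed set for $f_\la$ whenever $|\la| < t_*$, and then use the presence of the superattracting fixed point at $0$ to conclude that this disk lies in the immediate basin $B_0$. Since both asymptotic values satisfy $|v_\la| = |v_\la'| = |\la| < t_*$, they will then lie in $B_0$ automatically, which is precisely the condition $\la \in \calc_0$.

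The heart of the argument is the pointwise estimate $|\tan w| \le 1$ on the disk $|w| \le \pi/4$. Writing $w = x + iy$, a direct computation gives
\[
|\tan w|^2 = \frac{\sin^2 x + \sinh^2 y}{\cos^2 x + \sinh^2 y} = 1 - \frac{\cos 2x}{\cos^2 x + \sinh^2 y},
\]
and if $|w| \le \pi/4$ then in particular $|x| \le \pi/4$, so $\cos 2x \ge 0$ and $|\tan w| \le 1$. Combining this with $|z^q| \le |z|^q \le t_*^q = \pi/4$ for $|z| \le t_*$ yields
\[
|f_\la(z)| = |\la|\cdot|\tan z^q|^p \le |\la| < t_*,
\]
so $f_\la$ maps $\overline{D(0, t_*)}$ strictly into $D(0, |\la|) \subset D(0, t_*)$.

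Forward invariance makes the iterates $\{f_\la^n\}$ uniformly bounded on $D(0, t_*)$, hence by Montel the disk lies in the Fatou set. Because $D(0, t_*)$ is connected and contains the superattracting fixed point $0$, it must lie in a single Fatou component, which can only be $B_0$. Finally, $|v_\la| = |v_\la'| = |\la| < t_*$ places both asymptotic values inside $D(0, t_*) \subset B_0$, giving $\la \in \calc_0$. There is no real obstacle in this proof: the only estimate requiring attention is $|\tan w| \le 1$ on $|w| \le \pi/4$, which reduces to the elementary observation that $\cos 2x \ge 0$ when $|x| \le \pi/4$; everything else is a routine consequence of the invariance of the disk and the definition of the central capture component.
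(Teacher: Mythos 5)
Your proof is correct and follows the same overall strategy as the paper---control $|\tan z^q|$ on $D$, deduce forward invariance of the disk and hence $D\subset B_0$, then note that $|v_\la|=|v_\la'|=|\la|<t_*$ places the asymptotic values inside---but the key estimate you use is different and, as it happens, repairs the one printed in the paper. You observe that $|\tan w|\le 1$ whenever $|\Re w|\le\pi/4$; this is exactly right and gives $f_\la(\overline D)\subset D(0,|\la|)\subset D$. The paper instead asserts $|\tan z|\le|\tan(\Re z)|$; from your own identity $|\tan(x+iy)|^2 = 1-\cos 2x/(\cos^2 x+\sinh^2 y)$ this quantity is \emph{increasing} in $|y|$ when $|x|<\pi/4$, so the true inequality is $|\tan z|\ge|\tan(\Re z)|$ (e.g.\ $|\tan(iy)|=\tanh|y|>0=\tan 0$), and the paper's chain $|\la\tan z^q|<|\la\tan(\Re z^q)|<|z|$ does not hold as written. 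Your weaker-looking bound $|\tan z^q|\le 1$ is what actually suffices, and your concluding step via Montel plus connectivity of $D$ is also somewhat cleaner than the paper's route of trying to show $|f_\la(z)|<|z|$ pointwise and arguing modulus decrease.
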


\begin{proof} 

First consider the function $h(x)=x/\tan x^q$ for real $x$. Its derivative $$h'(x)=\frac{\tan x^q-qx^q\sec^2 x^q}{\tan^2x^q}<0$$  for $x^q\in (0,\pi/2)$, since $$\frac{qx^q\sec^2x^q}{\tan x^q}=q\frac{x^q}{\sin x^q}\frac{1}{\cos x^q}> 1.$$ Thus for any $x\in (0,t_*)$, $h(x)=x/\tan x^q>h(t_*)=t_*$ and so
  \begin{equation} \label{eq1}
  t \tan x^q< x  \ \ \ \text{ for }x\in (0,t_*), t\in(0,t_*].
  \end{equation}

 Therefore, for any $t\in (0, t_*]$ and $x\in (0, t_*)$,  by inequality (\ref{eq1}), we have $$f_t(x)=t\tan^px^q<t\tan x^q<x.$$

Next, we use this inequality to prove that
\begin{equation}\label{eqn2} |f_{\la}(z) |=|\la | \tan^p(z)^q| < |z|  \end{equation}
for  all $\la \in D$ and all $z \in D$. This implies that for any such $\la$,  the whole disk $D$ is in $B_{0}$ and, in particular, that the asymptotic value $v_{\la}=i^p \la$ is attracted to the origin.

Note first that  if $z=x+iy\in D$, then  $|\tan z|
\leq |\tan x|<1$.  Now since $\pi/4 < t_* <1$, for $z, \la \in D$,  inequality (\ref{eq1}) implies
\[  |f_{\la}(z)| =|\la (\tan(z^q))^p| < |\la \tan z^q|< | \la \tan{\Re z^q}| <|z|. \]

which proves  inequality~(\ref{eqn2}). 
\end{proof}

It follows that the segment $(0, t_*) \in \calc_0$ is the ray $\calr_0$; its endpoint is clearly not in $\calc_0$ and so it is a boundary point of $\calc_0$.  Moreover, it is obvious that the ray lands there. 

 The next two theorems show there are many points on the boundary of the disk $D=D(0,t_*)$ that are also boundary points of $\calc_0$.  
 
 \begin{defn}\label{Misz} If $\la$ is a parameter such that the asymptotic value of $f_{\la}$  lands on a repelling fixed point, then $\la$  is called {\em Misiurewicz parameter}.   
 \end{defn}  
\begin{remark} Misiurewicz points were initially defined for the family of quadratic polynomials as parameter points where the critical value eventually lands on a repelling periodic cycle.  The critical point could not  belong to the cycle because then the cycle would contain a critical value and be attracting.  
 In the family $\calf$, if $pq=2$, the asymptotic values are omitted and so cannot belong to any periodic cycle; they may, however land on one, and if they do, the cycle must be repelling.     If, on the other hand,  $pq>2$,  the asymptotic values have infinitely many preimages;  if one of these belongs to a periodic cycle, so does the asymptotic value.     If this is the case, and if the cycle is repelling, we again call the parameter a Misiurewicz point. 
\end{remark}

Recall that $\omega_j$  $j=0, \ldots ,2q-1$ are the $q^{th}$ roots of $\pm 1$ respectively for $j$ even and odd, and $t_* =(\pi/4)^{1/q}.$

\begin{thm}\label{Mpts} If $pq$ is even, then any point the form $\lambda=t_{*} \omega_j$  is a boundary point $\mathcal{C}_0$  and is a  Misiurewicz  parameter.   There are $2q$ such points, equally distributed on the circle and one of them is on the positive real axis. 
  \end{thm}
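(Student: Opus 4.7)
The plan is to combine Lemma~\ref{disk} with a direct fixed-point computation and the rotational symmetries from Section~\ref{param symmetries}.

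First I would establish the claim for the real parameter $\lambda = t_{*} = (\pi/4)^{1/q}$. By Lemma~\ref{disk}, the open disk $D(0,t_{*})$ lies in $\mathcal{C}_{\mathbf 0}$, and the circle $|\lambda|=t_{*}$ is its boundary, so $t_{*}$ is automatically an accumulation point of $\mathcal{C}_{\mathbf 0}$. The content is therefore to show that $t_{*}\notin\mathcal{C}_{\mathbf 0}$ and that the asymptotic value eventually lands on a repelling fixed point. Observe that
\[
f_{t_{*}}(t_{*}) \;=\; t_{*}\tan^{p}(t_{*}^{q}) \;=\; t_{*}\tan^{p}(\pi/4) \;=\; t_{*},
\]
so $t_{*}$ is a fixed point. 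Differentiating $f_{\lambda}'(z)=pq\lambda z^{q-1}\tan^{p-1}(z^{q})\sec^{2}(z^{q})$ and evaluating gives
\[
f_{t_{*}}'(t_{*}) \;=\; pq\, t_{*}^{q}\,\tan^{p-1}(\pi/4)\,\sec^{2}(\pi/4) \;=\; pq\cdot\tfrac{\pi}{4}\cdot 1\cdot 2 \;=\; \tfrac{pq\pi}{2},
\]
which exceeds $1$ since $pq\ge 2$; hence $t_{*}$ is repelling.

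Next I would check that the asymptotic value $v_{t_{*}} = i^{p} t_{*}$ eventually lands on $t_{*}$, splitting on the parity of $p$ (recalling $pq$ is even). If $p$ is even then $v_{t_{*}} = \pm t_{*}$, and a one-line calculation shows $(\pm t_{*})^{q}$ equals $\pm \pi/4$ so that $\tan^{p}((\pm t_{*})^{q})= (\pm 1)^{p}=1$, giving $f_{t_{*}}(v_{t_{*}}) = t_{*}$. If $p$ is odd (so $q$ must be even), then $v_{t_{*}}=\pm i t_{*}$, and $v_{t_{*}}^{q}=(\pm i)^{q} t_{*}^{q}=\pm\pi/4$, so $f_{t_{*}}(v_{t_{*}}) = t_{*}\tan^{p}(\pm\pi/4) = \pm t_{*}$, which is either the fixed point or a preimage of it handled by the previous case. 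In every case $v_{t_{*}}$ is strictly preperiodic to a repelling fixed point, so $v_{t_{*}}\in J(f_{t_{*}})$; in particular $v_{t_{*}}\notin B$, and $t_{*}\notin\mathcal{C}_{\mathbf 0}$. Combined with $t_{*}\in\partial D(0,t_{*})\subset\overline{\mathcal{C}_{\mathbf 0}}$, this shows $t_{*}\in\partial\mathcal{C}_{\mathbf 0}$ and that $t_{*}$ is a Misiurewicz parameter in the sense of Definition~\ref{Misz}.

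Finally I would promote this to the full orbit $\{t_{*}\omega_{j}\}_{j=0}^{2q-1}$ using the symmetries of Section~\ref{param symmetries}. By Lemma~\ref{conj1}(1), when $pq$ is even the map $f_{t_{*}\omega_{j}}$ is conformally conjugate to $f_{t_{*}}$ via $z\mapsto\omega_{j}z$. Hyperbolicity-type properties, in particular the property that the asymptotic value is preperiodic to a repelling fixed point, and the property of lying on $\partial\mathcal{C}_{\mathbf 0}$ (which is preserved because $\mathcal{C}_{\mathbf 0}$ itself is invariant under $\lambda\mapsto\omega_{j}\lambda$ by Lemma~\ref{sym1} and its variants), transfer directly. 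Since the $\omega_{j}$ are equally spaced $q$-th roots of $\pm 1$ and $\omega_{0}=1$, this yields exactly $2q$ equally distributed Misiurewicz boundary points on the circle $|\lambda|=t_{*}$, one of which lies on the positive real axis.

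The only real subtlety, which is the step I would expect to require the most care, is the case analysis for the orbit of $v_{t_{*}}$ under different parities of $p$ and $q$ with $pq$ even; everything else is either a direct computation or a citation of symmetry. Once those cases are dispatched, boundary membership is automatic from the sandwich between the disk $D(0,t_{*})\subset\mathcal{C}_{\mathbf 0}$ and the Julia-set location of $v_{\lambda}$.
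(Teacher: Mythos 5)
Your proof is correct and follows essentially the same strategy as the paper's: exhibit $t_*$ as a repelling fixed point with $f'_{t_*}(t_*)=pq\pi/2>1$, show the asymptotic value lands on it after finitely many steps, conclude $t_*$ is a Misiurewicz parameter, sandwich it on $\partial\mathcal{C}_{\mathbf 0}$ using Lemma~\ref{disk}, and transport everything around the circle by the conjugacy of Lemma~\ref{conj1}(1). In fact, your case split on the parity of $p$ is a genuine improvement: the paper's proof writes $f_{t_*}(v_{t_*})=f_{t_*}(-t_*)$, which implicitly assumes $v_{t_*}=\pm t_*$ and hence $p$ even, whereas the theorem's hypothesis $pq$ even also allows $p$ odd and $q$ even, in which case $v_{t_*}=\pm it_*$ and one more iterate is needed (exactly as you compute) before the orbit reaches the fixed point. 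So you have filled in a small gap the paper elides.
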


\begin{proof}  By hypothesis, $f_{t_*}$ is even so 
$f_{t_*}(v_{t_*})=f_{t_*}(-t_*)=f_{t_*}(t_*)=t_*$.  
Also, 
\[ f'_{t_*}(t_*)=pq \, t_{*}^{q}\tan^{p-1}(t_*^q)\sec^2(t_*^q)=pq\pi/2>1\]
 and  $t_*$ is a repelling fixed point.  
 In other words, the image of the asymptotic value is a fixed point, and $\la=t_*$ is a Misiurewicz parameter.

 Since $pq$ is even,  Lemma \ref{conj1} applies, and $f_{t_*w_j}$ is conjugate to $f_{t_*}$;   these are also  Misiurewicz parameters. 
\end{proof}

Theorem~\ref{ubddcomps} implies
\begin{prop}\label{mptsbdy} If $\la^* = t_* \omega_j$ is a Misiurewicz parameter on the boundary of $\calc_0$, then $B_{\la^*}$, the basin of zero for $f_{\la^*}$, contains infinitely many unbounded simply connected components.
\end{prop}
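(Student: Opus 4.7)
My plan is to verify the hypotheses of Theorem~\ref{ubddcomps} at $\la^*$ and then to upgrade its conclusion using Lemma~\ref{simpconn} and the structure of the Fatou set. By Theorem~\ref{Mpts}, $v_{\la^*}$ is pre-periodic and eventually lands on the repelling fixed point $t_*$, so $v_{\la^*}\in J_{\la^*}$ and in particular $v_{\la^*}\notin B_0$; by the symmetries of Section~\ref{dyn symmetries} the same holds for $v'_{\la^*}$. Lemma~\ref{simpconn} then says every component of $B_{\la^*}$ is simply connected. I also need that the Fatou set of $f_{\la^*}$ is exhausted by $B_{\la^*}$: Theorems~\ref{NoWD} and~\ref{NoBD} rule out wandering and Baker domains; no non-zero attracting or parabolic cycle can exist because both asymptotic values have forward orbits terminating on the repelling fixed point $t_*$; and any Siegel disk cycle would have its boundary in the postsingular set of $f_{\la^*}$, which is the finite set $\{0,v_{\la^*},v'_{\la^*},t_*\}$ and so cannot contain a topological circle.

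The key remaining step is showing that $v_{\la^*}$ is accessible from a Fatou component. Lemma~\ref{disk} shows that the real segment $\gamma_0(s)=s$, $s\in(0,t_*)$, lies in $B_0$, and it manifestly lands at $t_*\in\partial B_0$, so $t_*$ is accessible from $B_0$. Every point in the finite forward orbit $v_{\la^*}\mapsto f_{\la^*}(v_{\la^*})\mapsto\cdots\mapsto t_*$ is a regular point of $f_{\la^*}$: none is a pole (since the next iterate is finite), and none is a critical point (since every critical value equals $0$, so a critical intermediate iterate would force $t_*=f_{\la^*}^N(v_{\la^*})=0$). Hence $f_{\la^*}$ has a well-defined local holomorphic inverse along the orbit, and successive pullbacks of $\gamma_0$ yield a curve in a component of $f_{\la^*}^{-N}(B_0)\subset B_{\la^*}$ landing at $v_{\la^*}$.

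Once accessibility is in hand, Theorem~\ref{ubddcomps} applies, and its proof actually shows that the pullbacks of the landing curve by the infinitely many inverse branches of $f_{\la^*}$ produce infinitely many pairwise disjoint unbounded curves inside the $2q$ asymptotic tracts, each in its own unbounded Fatou component. By the first paragraph these components are simply connected and belong to $B_{\la^*}$, which is exactly the conclusion we want. The most delicate point is the accessibility verification in the case $p$ odd, $q$ even, where $v_{\la^*}$ is pure imaginary and distinct from $t_*$; here one must pull back along the entire pre-periodic orbit rather than a single step, and it is crucial that $0$ is the only critical value, so no intermediate iterate can be critical without violating $f_{\la^*}^{N}(v_{\la^*})=t_*\neq 0$.
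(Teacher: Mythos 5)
Your proof is correct and follows the paper's overall strategy — reduce the proposition to Theorem~\ref{ubddcomps} by showing that $v_{\la^*}$ is an accessible boundary point of a Fatou component, with simple connectivity supplied by Lemma~\ref{simpconn} — but your treatment of the key accessibility step is genuinely different. The paper stays inside $B_0$: it takes the dynamic ray $\phi_{t_*}^{-1}(i^p r)$, $r\in[0,1)$, pointing toward the asymptotic value and simply asserts that the limit as $r\to 1$ exists, so that $v_{\la^*}$ is accessible from $B_0$ itself. You instead first access the repelling fixed point $t_*$ via the explicit forward-invariant real segment $(0,t_*)\subset B_0$ (the monotonicity estimate from the proof of Lemma~\ref{disk} with $t=t_*$), and then transport the landing curve backwards along the finite pre-periodic orbit of $v_{\la^*}$ by local inverses, which exist because no point of that orbit is a pole or a critical point (the only critical values being $0$ and $\infty$). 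This buys you a self-contained justification of the landing that the paper leaves implicit, and it treats uniformly the cases $v_{\la^*}=\pm t_*$ and $v_{\la^*}=\pm i t_*$; the small price is that your curve a priori lands at $v_{\la^*}$ from a component of $f_{\la^*}^{-N}(B_0)$ rather than from $B_0$, which is still covered by the hypotheses of Theorem~\ref{ubddcomps}. The material in your first paragraph ruling out non-zero cycles and Siegel disks is correct but not actually needed: the unbounded components produced by Theorem~\ref{ubddcomps} are preimages of $B_0$ and therefore already lie in $B_{\la^*}$.
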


\begin{proof} The proof will follow from theorem~\ref{ubddcomps} if we show that the asymptotic value $v_{\la^*}$ is an accessible boundary point of the immediate basin of zero for $f_{\la^*}$.  

Without loss of generality, assume $j=0$.   Since $t_*$ is real, the immediate basin  $B_0$  for $f_{t_*}$ contains the rays  $\phi_{t_*}^{-1}(i^p r)$, $r \in [0,1)$;  the limit as $r \to 1$ exists  so it is accessible.  \end{proof}

\medskip
There is another set of  $2q$ equally distributed rays in $\calc_0$ that extend outside $D$ and end in parabolic cusps.  

More precisely,

\begin{thm}~\label{cusps} Let $\omega_j$, $\xi_j$, $j=0, \ldots 2q-1$ be the $q^{th}$ roots of $\pm 1$ and $\pm i$ respectively.   
\begin{enumerate}
\item
If $pq$ is even,   the endpoints of the lines  $t \xi_j$, in $\calc_0$  are points for which the functions have a parabolic fixed point and hence a cusp.
\item If  $pq$ is odd, the endpoints of the lines  $t \omega_j$  in $\calc_0$  are points for which the functions have a parabolic period two cycle.  More precisely,  if $ pq \equiv 1 \mod 4$ and j is even, the function at the endpoint has two parabolic fixed points, whereas if j is odd, the it has a parabolic cycle with period two. If $pq \equiv 3 \mod 4$, the parity of $j$ is reversed.  
\end{enumerate}
\end{thm}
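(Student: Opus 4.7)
The proof strategy is to use the conjugacies of Lemmas~\ref{conj1} and~\ref{conj2} to transport each ray in the $\la$-plane to a ray in the parameter of the real family
\[
g_t(x)=t\tanh^p(x^q),\qquad t>0,\ x\in\RR,
\]
and then to identify the unique positive parameter $t_{**}$ at which $g_t$ first develops a parabolic fixed point. First I would refine the calculation in Lemma~\ref{conj1}: writing $f_{\omega_j t}(i^p\omega_j z)=\omega_j t\tan^p(i^{pq}\omega_j^q z^q)$ and noting $\omega_j^q=(-1)^j$, the sign $i^{pq}(-1)^j\in\{\pm i\}$ decides which real map appears. Explicitly, $f_{\omega_j t}\sim g_t$ exactly when $(pq\equiv 1 \bmod 4$ and $j$ even$)$ or $(pq\equiv 3\bmod 4$ and $j$ odd$)$, and $f_{\omega_j t}\sim g_{-t}$ in the complementary cases. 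In Case~(1), Lemma~\ref{conj2} already yields $f_{\xi_j t}\sim g_t$ with no sign ambiguity, so both cases reduce to analyzing $g_t$ or $g_{-t}$ on the real line.

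The analytic heart is to locate the critical parameter. I would study $H(x)=x/\tanh^p(x^q)$ on $(0,\infty)$; since $H(x)\sim x^{1-pq}\to\infty$ as $x\to 0^+$ and $H(x)\sim x$ as $x\to\infty$, $H$ attains a strict positive minimum. Differentiating one sees that $H'(x)=0$ is equivalent to the critical equation
\[
\sinh(2x^q)=2pq\,x^q,
\]
which has a unique positive root $x_0$. Setting $t_{**}:=H(x_0)$, a direct computation gives $g_{t_{**}}(x_0)=x_0$ and
\[
g_{t_{**}}'(x_0)=\frac{pq\,x_0^q\sech^2(x_0^q)}{\tanh(x_0^q)}=\frac{2pq\,x_0^q}{\sinh(2x_0^q)}=1,
\]
so $x_0$ is a parabolic fixed point of $g_{t_{**}}$ with multiplier $1$.

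Next I would show that the real ray $(0,t_{**})$ lies in the central shift locus and that $t_{**}$ is its landing point. The inequality $g_t(x)<x$ on $(0,\infty)$ is equivalent to $t<H(x)$ for all $x$, hence to $t<t_{**}$. For such $t$ every positive orbit of $g_t$ decreases monotonically to $0$; in particular the asymptotic value $w_t=\pm t$ is attracted to $0$, and via Step~1 so is the asymptotic value of $f_\la$. Hence $(0,t_{**})\xi_j\subset\calc_0$ (respectively $(0,t_{**})\omega_j\subset\calc_0$). At $t=t_{**}$ we have $t_{**}>x_0$, and from $\tanh^p(t_{**}^q)>\tanh^p(x_0^q)=x_0/t_{**}$ we obtain $g_{t_{**}}(t_{**})>x_0$; inductively the monotone decreasing orbit of $t_{**}$ stays in $(x_0,\infty)$ and converges to $x_0$, so $w_{t_{**}}$ is trapped by the parabolic petal and cannot reach $0$. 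Therefore $t_{**}\xi_j$ (resp.\ $t_{**}\omega_j$) lies on $\partial\calc_0$ and is the landing point of the ray.

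Finally I would transfer the parabolic structure back to $f_\la$. In Case~(1) the conjugacy is linear, producing a single parabolic fixed point and the cusp asserted in the statement. In Case~(2), because $p,q$ are both odd, $g_t$ is an odd function, so its parabolic fixed point at $x_0$ is paired with one at $-x_0$; hence when $f_\la\sim g_{t_{**}}$ the map $f_\la$ has two parabolic fixed points. When $f_\la\sim g_{-t_{**}}$, the identity $g_{-t}^2=g_t^2$ shows that $\{x_0,-x_0\}$ is a period-$2$ cycle of $g_{-t_{**}}$ with multiplier $g_{t_{**}}'(x_0)^2=1$, so $f_\la$ has a parabolic $2$-cycle. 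Matching these subcases with the dichotomy of Step~1 recovers exactly the residue-class statement of~(2). The main obstacle is the sign-and-parity bookkeeping of Step~1, which decides whether $f_\la$ is conjugate to $g_t$ or to $g_{-t}$ and hence whether the parabolic cycle is a pair of fixed points or a $2$-cycle; once the critical equation $\sinh(2x^q)=2pq\,x^q$ is isolated, the real-variable analysis in Steps~2 and~3 is routine.
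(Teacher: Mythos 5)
Your proof is correct, and the overall route --- conjugate along the rays $t\xi_j$ or $t\omega_j$ to the real family $g_t(x)=t\tanh^p(x^q)$, find the first positive $t$ at which a positive parabolic fixed point appears, and transport the parabolic structure back via the parity of $p$ and $q$ --- is the same as the paper's. The genuine difference lies in the central analytic step. The paper proves an abstract monotonicity Lemma~\ref{reallines}, setting $t_0=\inf\{t: f_t \text{ has a positive fixed point}\}$ and arguing by a minimality/continuity contradiction that the multiplier at $t_0$ must be $1$, and then verifies the hypotheses for $g_t$ in Corollary~\ref{parabbound}. You instead locate the parameter explicitly: you introduce $H(x)=x/\tanh^p(x^q)$, use $pq>1$ to see that $H\to\infty$ at both ends of $(0,\infty)$, reduce $H'=0$ to the critical equation $\sinh(2x^q)=2pq\,x^q$ with its unique positive root $x_0$, and check by hand that $t_{**}=H(x_0)$ gives a fixed point $x_0$ with multiplier $g_{t_{**}}'(x_0)=2pqx_0^q/\sinh(2x_0^q)=1$. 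Your approach is more elementary and yields an explicit transcendental equation for the landing parameter; the paper's abstraction is designed to be reusable and is in fact invoked again in the proof of Theorem~\ref{ratl rays}. You also carry out the sign/parity bookkeeping carefully, tracking the factor $\omega_j^q=(-1)^j$ that the paper's proof of Lemma~\ref{conj1}(2) silently drops, so that $f_{\omega_j t}\sim g_t$ exactly when $i^{pq}(-1)^j=i$; this is precisely the residue-class dichotomy asserted in the statement of Theorem~\ref{cusps} but left implicit in the paper's one-line proof, and filling it in is a genuine gain in precision. Your landing argument --- $(0,t_{**})\subset\calc_0$ because $g_t(x)<x$ on $(0,\infty)$ for $t<t_{**}$, while at $t=t_{**}$ the orbit of the asymptotic value is trapped above the parabolic point and so cannot reach $0$ --- is sound and matches conclusions (1)--(2) of Lemma~\ref{reallines}.
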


The proof follows by applying 
 
 the following lemma about a monotonic function of a real variable with an asymptotic value   to the family $f_t(z)=t\tanh^p z^q$.

\begin{lemma}\label{reallines}
Let $f_t(x)=f(t,x)$ be a family of analytic real maps satisfying the following:
\begin{itemize}
\item for each $t>0$, $f_t'(x)>0$ for all $x>0$,  and for each $x>0$, $\frac{df(t,x)}{dt}>0$ for all $t>0$.
\item for each $t$, there is a $b_t >0$ such that $f_t: [0, \infty) \rightarrow [0,b_t)$ and $b_t \to 0$ as $t \to 0$. 
\item $\displaystyle \lim_{t\to \infty} f_t(x)=\infty$
\item $\displaystyle \lim_{x\to\infty} f_t(x)=b_t$ and $\displaystyle \lim_{x\to 0} f_t(x)=0$
\end{itemize}  
Then there exists a $t_0$ such that 
\begin{enumerate}
\item For every fixed  $t$ in $(0,t_0)$, and for all $x>0$, $\lim_{n \to \infty} f^n_t(x)\to 0$.
\item $f_{t_0}$ has a positive parabolic fixed point.
\item If $t=t_0$, $f_t$ has an attracting fixed point other than $0$. 
\end{enumerate}
\end{lemma}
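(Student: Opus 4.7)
The plan is to analyze the fixed-point equation $f_t(x)=x$ on $(0,\infty)$ as $t$ varies, viewing positive fixed points as intersections of the graph $y=f_t(x)$ with the diagonal $y=x$. Because $f_t$ is strictly increasing in $t$ for each fixed $x>0$, bounded above by $b_t$, and $b_t\to 0$ as $t\to 0$, I expect a saddle-node (parabolic) bifurcation at some critical value $t_0$ where the graph first becomes tangent to the diagonal from below in $(0,\infty)$.

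Set $h_t(x)=f_t(x)-x$. Then $h_t(0)=0$ and $h_t(x)\to -\infty$ as $x\to\infty$. I define
\[
t_0=\sup\{t>0 : f_t(x)<x \text{ for all } x>0\}.
\]
To see $t_0>0$, note that for small $t$ the bound $b_t$ is small, so $f_t(x)\le b_t<x$ for $x\ge b_t$, while for $0<x<b_t$ the super-attractivity of the origin (implicit in the intended application $f_t(x)=t\tanh^p x^q$ with $pq>1$, where $f_t(x)=O(x^{pq})$ near $0$) gives $f_t(x)<x$. To see $t_0<\infty$, observe that for large $t$, $f_t(x_1)\to\infty$ for any fixed $x_1>0$. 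Monotonicity in $t$ then forces $f_t(x)<x$ for all $x>0$ whenever $t<t_0$, which immediately yields part (1): the orbit $x_n=f_t^n(x)$ is strictly decreasing and positive, hence converges to a limit $L$ satisfying $f_t(L)=L$, so $L=0$.

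For part (2), continuity gives $f_{t_0}(x)\le x$ for all $x\ge 0$, while for every $t>t_0$ there is an $x_t>0$ with $f_t(x_t)\ge x_t$. A subsequential limit $x_t\to x_0$ as $t\downarrow t_0$ yields $f_{t_0}(x_0)=x_0$ with tangency, forcing $f_{t_0}'(x_0)=1$, i.e., a positive parabolic fixed point. Interpreting part (3) as describing $t$ slightly greater than $t_0$ (as must be intended, since a parabolic point is not attracting), the implicit function theorem applied to $h_t(x)=0$ near $(t_0,x_0)$, using $\partial_t h_{t_0}(x_0)>0$ and the quadratic tangency $h_{t_0}''(x_0)<0$, splits the parabolic point into two transverse positive fixed points $\alpha(t)<\beta(t)$. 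At $\alpha(t)$ the graph crosses the diagonal from below, so $f_t'(\alpha(t))>1$ (repelling); at $\beta(t)$ it crosses from above, so $0<f_t'(\beta(t))<1$ (attracting), which is the desired non-zero attracting fixed point.

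The chief technical obstacle is confirming that the tangency point $x_0$ lies strictly in $(0,\infty)$ rather than degenerating to $0$ or escaping to infinity. The super-attractivity hypothesis $f_t'(0)=0\neq 1$ rules out the former by keeping $0$ uniformly away from being a parabolic point, and the bound $f_t(x)\le b_t$ together with $h_t(x)\to -\infty$ as $x\to\infty$ rules out the latter. A secondary point of care is verifying the quadratic non-degeneracy $h_{t_0}''(x_0)\neq 0$ of the tangency, which is needed to apply the standard saddle-node normal form and conclude the split into transverse fixed points; this should follow from the explicit form of $f_t$ in the applications at hand.
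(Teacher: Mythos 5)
Your proposal follows the same overall strategy as the paper: define the bifurcation threshold $t_0$ via the monotone family of fixed-point equations, show orbits decrease to $0$ for $t<t_0$, and extract a tangential (multiplier~$1$) fixed point at $t=t_0$. The paper sets $t_0=\inf\{t: f_t\text{ has a positive fixed point}\}$, which agrees with your supremum definition by the intermediate value theorem and monotonicity in $t$; both then argue by continuity/compactness that $t_0$ itself carries a positive fixed point $x_0$, and both conclude $f_{t_0}'(x_0)=1$ (the paper by an implicit-function-theorem contradiction at the minimizer, you by noting that $h_{t_0}=f_{t_0}-\mathrm{id}$ attains an interior maximum of $0$ at $x_0$ — the latter is a bit cleaner). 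The one place you diverge noticeably is part (3): you unfold the parabolic point via a saddle-node normal form, which requires the non-degeneracy $h_{t_0}''(x_0)\neq 0$ that the lemma's hypotheses do not guarantee, whereas the paper avoids this entirely by using only $f_t(x_0)>x_0$ for $t>t_0$ together with $f_t(x)\to b_t<\infty$, which forces another fixed point in $(x_0,\infty)$ at which the graph crosses the diagonal from above, giving $f_t'\le 1$ there without any second-derivative input. The paper's route is the more elementary and general one. You also correctly observe two things the paper does not flag: the statement of part (3) should read $t>t_0$ rather than $t=t_0$ (the paper's own proof in fact proves the $t>t_0$ version), and the listed hypotheses do not by themselves force $f_t(x)<x$ for all small $x>0$ when $t$ is small — both you and the paper are implicitly using the super-attracting behavior $f_t(x)=O(x^{pq})$ at the origin that holds in the intended application $f_t=t\tanh^p x^q$, $pq>1$, and you are right that the paper's "it is easy to see" is papering over this.
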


\begin{proof}
By hypothesis, for any fixed $x$,  $f(t,x)$ and its partial derivative with respect to $t$ are monotonic positive increasing functions of  $t$ and $f(t,x) \to \infty$ as $t\to \infty$;  thus  there exists a $t_1=t_1(x)$ such that $f_{t}(x)>x$ for all $t\geq t_1$.
Moreover, since for any fixed $t$, $\lim_{x\to\infty} f_t(x)=b_t >0$,   for large $x$, say $x >b_t$,  $b_t >f_t(x)$ and so $x> f_t(x)$. Therefore for any $t>t_1$, there exists an $x=x(t)>0$, such that $f_t(x)=x$ and $x(t)$ is a continuous function of $t$.

Define the set $$S=\{t \ |\ \text{there exists an } x=x(t) \in (0,\infty) \text{ such that } f_t(x)= x\}.$$  By the above, the set $S$ is non-empty.

Let $t_0$ be the infimum over all $t$ in the set $S$.
It is easy to see that when $t$ is very small, $f_t(x)<x$  for all $x>0$.  Thus for such $t$, $f_t$  has no positive fixed points so that $t_0>0$.  Moreover, for any $t\in (0,t_0)$, $f^n_t(x)\to 0$ as $n\to \infty$.

 Let $x_0 >0$ be a solution of  $f_{t_0}(x_0)=x_0$. We claim that $x_0$ is a parabolic fixed point with multiplier $+1$.

 Suppose not.   Since for each $t>0$, $f_t'(x)>0$ for all $x>0$, $f_{t_0}'(x_0)>0$.
  If $f_{t_0}'(x_0)>1$,  since $f_t'(x)$ depends continuously on $t$,   $f_t'(x)>1$ in some  interval $I$ of $t_0$, and thus $f_t(x)=x$ has a solution for all $t$ in this interval contradicting the minimality of $t_0$.   Similarly, if $f_{t_0}'(x_0)<1$,  then $f_t'(x)<1$ in some  interval $I$ of $t_0$ and $f_t(x)=x$ has no solution in $I$, again contradicting the minimality of $t_0$. Therefore $x_0$ is a parabolic fixed point.

For any $t > t_0$, $f_t(x_0)>x_0$ and $\lim_{x\to \infty} f_t(x)\to b_t$,  so $\lim_{x \to \infty} f_t'(x) =0$ and there is another solution of $f_t(x)=x$ with derivative less than $1$. That is, there exists an attracting fixed point $x\in (x_0,\infty)$.  

\end{proof}

As a corollary we have
\begin{cor}\label{parabbound}There exists a $t_0> t_*$, depending on $p, q$ such that
for  the family $f_t(z)=t\tanh^p z^q$
  \begin{enumerate}
  \item if $t\in(0, t_0)$, $f^n_t(x)\to 0$ for all $x\geq 0$.
\item if $t=t_0$, $f_t(x)$ has a parabolic fixed point.
\item if $t>t_0$, $f_t(x)$ has another attracting fixed point.
  \end{enumerate}
  \end{cor}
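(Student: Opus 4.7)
The plan is to apply Lemma~\ref{reallines} directly to the real restriction $f_t(x)=t\tanh^p(x^q)$ and then show that the resulting threshold satisfies $t_0>t_*$. First I would verify the four hypotheses of Lemma~\ref{reallines}: differentiating gives
\[ f_t'(x)=tpq\,x^{q-1}\tanh^{p-1}(x^q)\sech^2(x^q)>0\quad\text{and}\quad \frac{\partial f}{\partial t}(t,x)=\tanh^p(x^q)>0 \]
for all $x,t>0$; since $\tanh(u)\to 1$ as $u\to\infty$ one may take $b_t=t$, which gives $b_t\to 0$ as $t\to 0$ and $f_t([0,\infty))\subset[0,b_t)$; finally $\lim_{t\to\infty}f_t(x)=\infty$ for every fixed $x>0$ and $\lim_{x\to 0}f_t(x)=0$. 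Lemma~\ref{reallines} then supplies a threshold $t_0>0$ for which conclusions (1), (2), (3) of the corollary hold.

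It remains to prove the strict inequality $t_0>t_*$. The key input is inequality~(\ref{eq1}) from the proof of Lemma~\ref{disk}: for $t\in(0,t_*]$ and $x\in(0,t_*)$ one has $t\tan(x^q)<x$. Combined with the elementary bounds $0<\tanh(u)<\tan(u)$ on $(0,\pi/2)$ and $\tanh^p(u)\le\tanh(u)$ (since $\tanh(u)\in(0,1)$ and $p\ge 1$), this yields
\[ f_{t_*}(x)=t_*\tanh^p(x^q)\le t_*\tanh(x^q)<t_*\tan(x^q)<x\qquad\text{for }x\in(0,t_*), \]
while for $x\ge t_*$ the trivial bound $f_{t_*}(x)<t_*\le x$ applies. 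Hence $f_{t_*}$ has no positive fixed point, so in the notation of the proof of Lemma~\ref{reallines} we have $t_*\notin S$.

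To upgrade this to $t_0>t_*$ I would use a compactness argument. Suppose, for contradiction, that $t_n\searrow t_*$ and $f_{t_n}(x_n)=x_n$ with $x_n>0$. Then $x_n=t_n\tanh^p(x_n^q)\le t_n$ is bounded, so a subsequence converges to some $x_0\ge 0$. If $x_0>0$, passing to the limit gives a positive fixed point of $f_{t_*}$, contradicting the previous paragraph. If $x_0=0$, the expansion $\tanh^p(x_n^q)\sim x_n^{pq}$ forces $t_n x_n^{pq-1}\to 1$, which together with $pq>1$ and $x_n\to 0$ forces $t_n\to\infty$, again a contradiction. Hence $t_0>t_*$, and the three conclusions of the corollary are exactly the three conclusions provided by Lemma~\ref{reallines}. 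The argument is largely mechanical; the only step requiring genuine care is this final strict-inequality upgrade, which is the reason one needs both Lemma~\ref{disk} and the above limit argument rather than an appeal to monotonicity alone.
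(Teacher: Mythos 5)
Your proposal is correct, but it takes a different and somewhat more roundabout route than the paper, and that difference is worth noting.

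The paper shows directly that $\tanh u<u$ for $u>0$ (differentiate $u-\tanh u$), and uses this plus elementary monotonicity to get $\tanh^p x^q<x$ for all $x>0$; this is precisely $f_1(x)<x$, so $(0,1]\cap S=\emptyset$ and $t_0=\inf S\geq 1>t_*$. That settles the strict inequality at once. You instead show only $f_{t_*}(x)<x$, i.e.\ $t_*\notin S$, by routing through Lemma~\ref{disk}'s inequality $t_*\tan(x^q)<x$ and the bound $\tanh u<\tan u$. That establishes $t_0\geq t_*$ but not strictly, so you then need a separate argument. Your compactness argument for the strict inequality is correct (the small-$x$ asymptotic $\tanh^p(x_n^q)\sim x_n^{pq}$ with $pq>1$ forces $t_n\to\infty$, ruling out $x_0=0$). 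However, given the lemma as stated, that step is logically redundant: conclusion (2) of Lemma~\ref{reallines} already asserts that $f_{t_0}$ has a positive fixed point, so $t_0\in S$, and then $t_*\notin S$ immediately gives $t_0\neq t_*$, hence $t_0>t_*$. What your compactness argument is really doing is patching a gap in the proof of Lemma~\ref{reallines} itself (that proof writes ``Let $x_0>0$ be a solution of $f_{t_0}(x_0)=x_0$'' without proving such a solution exists, i.e.\ that $\inf S\in S$); that is a useful observation, but in a proof of the corollary it is simpler either to invoke the lemma's conclusion or to follow the paper's route and show $f_1(x)<x$ outright, which avoids both the detour through $\tan$ and the limiting argument.

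One minor point: the paper also records the parity symmetries ($f_t(-x)=f_t(x)$ and $f_{-t}(x)=-f_t(x)$ when $pq$ is even, $f_t(-x)=f_{-t}(x)$ when $pq$ is odd) to justify restricting to $x,t>0$; you restrict to $x,t>0$ without comment. This does not affect correctness, since the corollary is stated for real $x\geq 0$, but it's worth making the restriction explicit.
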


\begin{proof}
Note first that if $pq$ is even, $f_t(x)=t\tanh^px^q$ satisfies $f_t(-x)=f_{t}(x)$ and $f_{-t}(x)=-f_t(x)$ whereas if 
  $pq$ is odd,  $f_t(-x)=f_{-t}(x)$.    Thus if we confine our discussion to $x, t >0$,  the above lemma applies.  Therefore we need only 
    show that 
  that $t_0 > t_*=(\pi/4)^{1/q}$ for this family.  In fact, we will show $t_0 \geq 1 > t_*$.

 This will follow directly by showing
\[ t_* \tanh^p x^q <\tanh^p x^q < x,  \, \, \mbox{ for all } x >0.  \, \,  (**) \]
Note that $t_*<1$ and $\tanh^px^q <1$ so that $ \tanh^px^q <x$ for all $x \geq 1$. Thus  we need only prove $(**)$ for $x<1$.  In this case $x^q \leq x$ and $\tanh^px^q \leq \tanh x^q$ so that
\[ \frac{t_* \tanh^px^q}{x} < \frac{\tanh^px^q}{x} <  \frac{\tanh^px^q}{x^q}<\frac{\tanh x^q}{x^q}.\]

Therefore, we need only show that  $\tanh x<x$ for $x>0$.   This follows  because if
$g(x) = x - \tanh x$,  then $g(0)=0$ and $g'(x)=1-\sech^2 x >0$  so that $g(x)>0$ for all $x>0$.
\end{proof}

\begin{proof} [Proof of Theorem~\ref{cusps}.]
Lemmas~\ref{conj1},  \ref{conj2} and \ref{reallines}, together imply:
\begin{enumerate}
\item if $pq$ is even,  the line segment $t\xi_j, \ t\in [0,t_0)\in \mathcal{C}_0$, and  therefore $f_{t_0\omega_j}$ is conjugate to $g_{t_0}$ and  has a parabolic fixed point.

\item if $pq$ is odd, the line segment $t\omega_j, \ t\in [0,t_0)\in \mathcal{C}_0$, and therefore $f_{t_0\omega_j}$ is either conjugate to $g_{t_0}$, which has two parabolic fixed points, or to $g_{-t_0}$, which has a parabolic cycle of period $2$.

\end{enumerate}
\end{proof}

\subsection{Rational rays in $\calc_0$}
In the last section we considered the parameter rays $\omega_j t$ and $\xi_j t$ in $\calc_0$.  Their images under the map $\varphi$ defined in theorem~\ref{C0} using the B\"ottcher maps in the dynamic planes are rays in the disk $\DD$.  The theorems above show that if $\la$  belongs to one of these  rays in $\calc_0$, the asymptotic value, $v$ lies on a ray in the dynamic plane that is  invariant under $f$.
 Moreover, those rays are radii of a conformal disk in the parameter plane whose endpoints are either Misiurewicz points, points for which the function has a parabolic cycle.   This is true more generally.

\begin{thm}\label{ratl rays} The  rational rays $\calr_\theta \in \calc_0$ 
land at  parameters for which the asymptotic values  either belong to, or  land on repelling periodic cycles, or are attracted to parabolic cycles.
\end{thm}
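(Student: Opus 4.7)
The plan is to follow the Douady--Hubbard strategy for landing of rational parameter rays, adapted to the transcendental setting of $\calf$. The key observation is the commuting relation between the parametric B\"ottcher coordinate $\varphi$ of Theorem~\ref{C0} and the dynamic B\"ottcher coordinates $\phi_\la$: by construction $\varphi(\la)=\phi_\la(v_\la)$, so $\la\in\calr_\theta$ with $\varphi(\la)=se^{2\pi i\theta}$ means exactly that $v_\la$ lies on the dynamic ray $R_\theta\subset B_0(f_\la)$ at radial parameter $s$. Since $\phi_\la$ conjugates $f_\la|_{B_0}$ to $z\mapsto z^{pq}$ on $\DD$, a dynamic ray of angle $\theta$ maps to one of angle $pq\cdot\theta\pmod 1$, so for $\theta\in\QQ$ the ray $R_\theta$ is eventually periodic under $f_\la$. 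For hyperbolic $\la$ with connected $J(f_\la)$, local connectivity (Theorem~\ref{J is loc conn}) guarantees $R_\theta$ lands at a point $z(\la)\in J(f_\la)$ that is (pre)periodic and repelling by Corollary~\ref{reppts}.

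I would then transfer to the parameter plane. Properness of $\varphi:\calc_0\cup\{0\}\to\DD$ (Theorem~\ref{C0}) ensures that any sequence $\la_n\in\calr_\theta$ with $|\varphi(\la_n)|\to 1$ accumulates only on $\partial\calc_0$. Pick an accumulation point $\la^*$. For $\la$ near $\la^*$ along $\calr_\theta$ at which the repelling periodic cycle underlying $z(\la)$ persists, that cycle (and hence $z(\la)$) moves holomorphically via the implicit function theorem, and $v_\la$ is driven toward $z(\la)$ along $R_\theta$ as $s\to 1$.

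Three scenarios are then possible at $\la^*$: (i) the multiplier of the cycle stays bounded away from the unit circle and $v_{\la^*}$ coincides with a point on the finite forward orbit of $z(\la^*)$, in which case $\la^*$ is a Misiurewicz parameter and $v_{\la^*}$ either belongs to or lands on a repelling cycle; (ii) the multiplier tends to a root of unity as $\la\to\la^*$, so the cycle becomes parabolic at $\la^*$ and $v_{\la^*}$ sits in an attached parabolic petal, hence is attracted to the parabolic cycle; (iii) $v_{\la^*}$ pre-lands on a parabolic cycle, which again places its orbit in the parabolic basin. Uniqueness of the landing point $\la^*$ would then follow from the holomorphic motion of $z(\la)$ combined with the local univalence of $\varphi$ on $\calc_0\setminus\{0\}$: if $\calr_\theta$ accumulated on a non-trivial continuum in $\partial\calc_0$, one could propagate $\varphi^{-1}$ continuously along a corresponding continuum on $\partial\DD$, contradicting properness.

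The main obstacle is executing the Misiurewicz--parabolic dichotomy rigorously in this transcendental infinite-degree setting. Specifically, one must show that if the multiplier of the moving cycle remains bounded away from the unit circle all the way to $\la^*$, then $v_{\la^*}$ is forced to coincide with the landing point of $R_\theta$ at $\la^*$; this demands uniform control on how $v_\la$ approaches $z(\la)$ and rules out oscillatory cluster sets. The tools for this are the Rippon--Stallard expansion estimate (Proposition~\ref{RS}), applied on a post-singular-free neighborhood of $J$, together with the Koebe/pullback shrinkage argument used in Lemma~\ref{key2}. The parabolic case is then handled by standard perturbation theory of parabolic fixed points, verifying that convergence of the multiplier to a root of unity at $\la^*$ is equivalent to $R_\theta$ landing at a parabolic cycle toward which $v_{\la^*}$ is attracted through its petal.
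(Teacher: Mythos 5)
Your overall strategy --- transferring a rational parameter ray to the dynamic ray through $v_\la$ via the relation $\varphi(\la)=\phi_\la(v_\la)$, following the eventually periodic cycle of dynamic rays as $s\to 1$, and then sorting the limit into Misiurewicz versus parabolic --- is the same as the paper's. However, there are genuine gaps in the execution.

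First, you invoke Theorem~\ref{J is loc conn} and Corollary~\ref{reppts} to conclude that $R_\theta$ lands at a repelling (pre)periodic point $z(\la)\in J(f_\la)$. Those results apply only when the Julia set is connected, whereas for $\la\in\calc_0$ the Julia set is a Cantor set (Theorem~\ref{Cantor}), $B_0$ is completely invariant and infinitely connected, and the B\"ottcher map extends injectively only up to the level of the asymptotic value, not to all of $B_0$. Consequently the dynamic rays are not defined all the way to $J(f_\la)$ and there is no landing point $z(\la)$ available to move holomorphically. The paper instead works with the endpoints of the periodic cycle of rays on the boundary of the region of injectivity $\calo_{\la(s)}$ of $\phi_{\la(s)}$; these endpoints form a cycle that is necessarily repelling as long as $\la(s)\in\calc_0$, and the argument splits according to whether the periodic rays are asymptotic paths.

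Second, your parabolic alternative is asserted rather than derived: you never identify when, or why, the multiplier of the cycle should tend to a root of unity. In the paper this case arises precisely when $l=0$ and the periodic rays are asymptotic paths; then $f^k_{\la(s)}$ restricted to the invariant ray is conjugated to a monotone real one-parameter family, and Lemma~\ref{reallines} produces a saddle-node parabolic point (multiplier $+1$) at the landing parameter. That real-dynamics lemma is the essential ingredient missing from your sketch --- and indeed you flag this dichotomy yourself as the ``main obstacle'' without resolving it. Finally, your uniqueness-of-landing argument via properness of $\varphi$ does not work: properness does not prevent the ray from accumulating on a nondegenerate continuum in $\partial\calc_0$. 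The paper obtains uniqueness from discreteness: parameters admitting a parabolic cycle of a given order form a discrete set, and in the Misiurewicz case the limit repelling cycle persists holomorphically, so the (connected) accumulation set lies in the discrete solution set of an analytic equation and is therefore a single point.
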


\begin{proof} We saw above that the first case of the theorem is true for rays with $t=\omega_j$ and the second for rays with $t= \xi_j$.  
It will suffice therefore to consider rational rays $\theta$ in the sector between $t_0=\omega_0=0$ and $t_1=\omega_1$.

 Note that any accumulation point  $\la_{\infty}$ of $\calr_\theta$ in  $\partial \calc_0$ is finite by theorem~\ref{allbdd}.    The functions $f_{\la}$, their asymptotic values, prepoles and periodic points depend holomorphically on $\la$; the dependence is not, however, necessarily injective in a neighborhood of $\la_{\infty}$.  In addition, $\varphi$ is holomorphic in $\la$ so $\calr_{\theta}$ is real analytic in $\tau$.  

Assume now that $\theta$ is a fixed rational.  In the dynamic planes,  depending on $p,q$ and $\theta$,  but not on which point $\la$ on the ray $\calr_{\theta}$,  the asymptotic value $v_{\la}$, and its forward orbit,  may lie  on rays whose arguments differ by $\pi/2$.  For a given point on $\calr_{\theta}$,  we single out the dynamic ray  that contains  $v_{\la}$ and by abuse of our notation above, denote it by 
\[ R_{\theta}(s)=R_{\theta,\la(s)}=\phi_{\la(s)}^{-1}(se^{i\theta}), \, s \in (0,1).  \]

Since $\theta$ is rational,    there  are minimal integers $k,l$ such that $f_{\la(s)}^{k+l}(R_\theta) \subseteq f_{\la(s)}^l(R_\theta)$ and a $k$-cycle of rays, $\{ R_{\theta}^{l+j}=f_{\la(s)}^{j+l}(R_\theta)\}$, $j=0, \ldots,  k-1$, that is periodic.  

 Unless we need to emphasize $\la(s)$, for readability we will  suppress it.   Note that for  some value of $s$, $f^l(v ) = R^l_\theta(s)$. 
     If $l=0$, $R_{\theta}(s)=R^0_{\theta}(s)$ is periodic and the ray $R_\theta^{k-1}$ is  either an asymptotic path for all $f_{\la(s)}$ or it is not an asymptotic path for any  $f_{\la(s)}$.   Since $l$ is the minimal integer such that $f^l(v_{\la})$ is on a periodic ray,  if $l>0$,  none of the rays in the cycle is an asymptotic path.
     
 Below we assume $p$ is even so that there is only one asymptotic value.  The argument for $p$ odd is similar and left to the reader.     

 \begin{itemize}  
 \item
First suppose  that $l=0$, and   for all $s$, the  rays $R_{\theta}^{k-1}$ are asymptotic paths.  Using the maps $\phi_{\la(s)}$, we can conjugate the family of maps $f^k_{\la(s)}$ to a family of  real maps $g_\sigma:[0,\infty) \rightarrow [0,b_\sigma]$, $s=s(\sigma)$,   satisfying the hypotheses of  lemma~\ref{reallines}.    Then there exists a $\sigma_0$ such that if $\sigma \in (0,\sigma_0)$ the asymptotic value of $g_\sigma$ is attracted to $0$ and $g_{\sigma_0}$ has a parabolic fixed point.   Set $\la_{\infty}=s(\sigma_0)$;  it follows that $f_{\la_{\infty}}$ has a parabolic cycle and is in  $\partial \calc_0$.  It is an accumulation point of the ray $\calr_\theta$, and because the $f_{\la}$ with parabolic cycles  of order $k$ form a discrete set, it is unique and the ray lands there.

\item
Now suppose that either $l=0$ and for all $s$, the  rays $R_{\theta}^{k-1}$ are not asymptotic paths or that $l>0$.  It follows that for all $s$, none of the dynamic rays in the cycle, $R_{\theta}^{l+k}$ is an asymptotic path;  Therefore each of these periodic rays, including the one containing the asymptotic value if $l=0$, ends at a boundary point of the region of injectivity, $\calo_{\la(s)}$,  of the B\"ottcher map $\phi_{\la(s)}$. 
 Because the rays are periodic, their endpoints $b_{s,j}$, $j=0, \ldots, k-1$, form a periodic cycle   and the cycle and its multiplier are  holomorphic functions of $\la(s)$.   Since since $\la(s)$ is in $\calc_0$,  the cycle  is repelling.   The rays   contain all of the forward orbit of the asymptotic value.

Note that although, if $l=0$, one of the periodic rays contains an asymptotic value, it is not behaving locally like an asymptotic value because its preimage is not an asymptotic path. 

As $s \to 1$, the cycle persists and in the limit it is either parabolic or repelling.  Thus there is a unique limit point $\la_{\infty}=\lim_{s \to 1} \la(s)$. 

If the  cycle for $s=1$ is parabolic, the asymptotic value $v_{\infty}=v_{\la_{\infty}}$ of $f_{\la_{\infty}}$ is in the immediate basin of the cycle and so $l=0$ and it is attracted to the parabolic point of the limit cycle.  That is,  the boundary of the component of the parabolic basin containing $v_{\infty}$ shares the parabolic point with the boundary of $B_{0, \infty}$.

  Therefore if $l>0$, the limit cycle is repelling.
Now suppose the cycle for $s=1$ is repelling.     Then $v_{\infty}$ lies in the Julia set because it is no longer attracted to zero and there are no other parabolic or attracting cycles to attract it.  The subsequence of  $f^{-n}(v_{\infty})$  that lies on the limit of the periodic cycle of rays tends to the limit repelling cycle.  Therefore, $v_{\infty}$ eventually lands on this cycle and the point is a Misiurewicz point. 

Summarizing,  we have shown  that if $l>0$,  $\la_{\infty}$  is a Misiurewicz point and if $l=0$, it is either a Misiurewicz point or a parabolic point.   
 \end{itemize}

 \end{proof}

We remark that the above theorem implies that  the endpoint of a rational ray in $\calc_0$, cannot be a virtual cycle parameter.

\subsection{The components of $\mathcal{C}_\mathbf{n}$}

Recall that the non-central components are defined by 
  \[ \mathcal{C}_\mathbf{n}=\{\lambda\ | \ n\geq 0 \text{ is the minimal } k \text{ such that } f^k_\lambda(v_\la)\in B_0 \}.  \]

and the set of {\em centers} of $\calc_n$ by
\[ \mathcal{Z}_n=\{ \la_n \ n \geq 1 \text{ is the minimal integer such that } f^n_\lambda(v_{\la})=0\}. \]

 The map $\Xi$ of theorem~\ref{Cantor} that assigned a unique symbol of length $n$ to each prepole of order $n$ can be used to assign a similar symbol to the prezeroes and thus to the centers of the non-central components yielding an enumeration scheme  for these components.  
It is also clear from the definitions that 
\begin{prop}
If $\lambda\in \mathcal{Z}_n$, then $\lambda$ is in a component of   $\mathcal{C}_n$.
\end{prop}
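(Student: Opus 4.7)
The plan is to verify $\mathcal{Z}_n\subseteq \mathcal{C}_n$ by showing that for $\la\in \mathcal{Z}_n$ the minimal $k$ with $f^k_\la(v_\la)\in B_0$ is precisely $n$; once this is known, the connected component of $\mathcal{C}_n$ containing $\la$ witnesses the conclusion. Fix $\la\in\mathcal{Z}_n$, so that $f^n_\la(v_\la)=0$ while $f^k_\la(v_\la)\neq 0$ for $0\le k<n$, and note $v_\la=i^p\la\neq 0$ because $\la\neq 0$.

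The crux is to prove $v_\la\notin B_0$, which I would do by contradiction using the B\"ottcher coordinate. If $v_\la\in B_0$, then by the remark following Lemma~\ref{allin} the map $\phi_\la$ of Theorem~\ref{Bott} extends as an injective conformal homeomorphism onto the first nested set $U_N$ containing $v_\la$; since $f_\la(U_N)=U_{N-1}\subset U_N$, the functional equation $\phi_\la(f_\la(z))=\phi_\la(z)^{pq}$ is valid for every $z\in U_N$, and iterating it $n$ times gives
\[
\phi_\la(v_\la)^{(pq)^n}=\phi_\la(f^n_\la(v_\la))=\phi_\la(0)=0.
\]
Hence $\phi_\la(v_\la)=0$, and injectivity of $\phi_\la$ on $U_N$ forces $v_\la=0$, a contradiction. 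The partner asymptotic value $v'_\la=\pm v_\la$ is then also outside $B_0$ by Lemma~\ref{sym2}.

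With neither asymptotic value lying in $B_0$, the proof of Lemma~\ref{allin} (through Lemma~\ref{preimageSC} applied to each $U_n$) shows that the only preimage of $0$ contained in $B_0$ is $0$ itself. Suppose, for contradiction, that some $0\le k<n$ satisfied $f^k_\la(v_\la)\in B_0$; then $f^k_\la(v_\la)$ would be a preimage of $0$ (via $f^{n-k}_\la$) lying in $B_0$, hence equal to $0$, violating the minimality of $n$ in the definition of $\mathcal{Z}_n$. Therefore $n$ is indeed the minimal $k$ with $f^k_\la(v_\la)\in B_0$, so $\la\in\mathcal{C}_n$; continuous dependence of $f^k_\la(v_\la)$ on $\la$ together with the openness of $B_0$ makes $\mathcal{C}_n$ open at $\la$, placing it in a well-defined connected component of $\mathcal{C}_n$. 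The only delicate step is the B\"ottcher argument, and it relies exactly on the fact already recorded in the remark after Lemma~\ref{allin}, namely that $\phi_\la$ extends injectively up to the asymptotic value so that the functional equation can be pulled back the required $n$ iterates from $0$.
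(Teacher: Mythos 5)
Your proposal is correct. The paper offers no argument at all here --- it introduces the proposition with ``It is also clear from the definitions that'' --- so there is nothing to compare line by line; what you have done is supply the verification the paper leaves implicit, namely that for $\la\in\mathcal{Z}_n$ the minimal $k$ with $f_\la^k(v_\la)\in B_0$ really is $n$ and not something smaller. The one genuinely nontrivial point is ruling out $v_\la\in B_0$, and your B\"ottcher iteration ($\phi_\la(v_\la)^{(pq)^n}=\phi_\la(0)=0$ plus injectivity on $U_N$) does this cleanly; it is equivalent to the observation already recorded in the paper's remark after Lemma 3.5, that the first $U_N$ containing an asymptotic value contains no nonzero preimages of $0$, so your route stays entirely within the paper's framework. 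Two small glosses worth being aware of: the step ``$f^k_\la(v_\la)$ is a preimage of $0$ via $f^{n-k}_\la$, hence equal to $0$'' uses the statement about first preimages inductively along the forward orbit (each $f^{n-k-j}_\la(v_\la)$ lies in the forward-invariant set $B_0$ and maps to $0$ after one more step), and the final sentence about openness of $\mathcal{C}_n$ is not needed for the proposition as stated, since membership in $\mathcal{C}_n$ already places $\la$ in some connected component; openness is the content of the subsequent theorem on the components of $\mathcal{C}_n$.
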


Using standard techniques of quasiconformal surgery, see e.g. \cite{FG}, Prop. 4.5, the non-central components are simply connected and can be enumerated by the centers. Precisely, 

\begin{thm}\label{Cn}
Each component of $\mathcal{C}_\mathbf{n}$ is open, simply connected and contains a unique center. 
\end{thm}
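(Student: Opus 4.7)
The plan is to parametrize each connected component $C$ of $\calc_{\mathbf{n}}$ by a natural B\"ottcher-type map $\Psi:C\to \mathbb{D}$, to show $\Psi$ is a conformal isomorphism via quasiconformal surgery, and to read off the three conclusions (open, simply connected, unique center) from the properties of $\Psi$.

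\textbf{Openness.} First I would verify that $\calc_{\mathbf{n}}$ is open. Fix $\lambda_0 \in \calc_{\mathbf{n}}$; then $f^n_{\lambda_0}(v_{\lambda_0})\in B_0$, while for each $k<n$ the iterate $f^k_{\lambda_0}(v_{\lambda_0})$ lies in a specific component of $f^{-(n-k)}(B_0)\setminus B_0$ and is therefore contained in a definite open Fatou component of $f_{\lambda_0}$. Because $f_{\lambda_0}$ is hyperbolic, structural stability lets these Fatou components move holomorphically under small perturbations of $\lambda$, while the orbit of $v_\lambda$ depends holomorphically on $\lambda$. Hence the minimality condition in the definition of $\calc_{\mathbf{n}}$ persists on a neighborhood of $\lambda_0$, so $\calc_{\mathbf{n}}$ is open.

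\textbf{The natural parameter and surgery.} After restricting to a symmetric sector as in Remark~\ref{sector} so that the $(pq-1)$-st root in Theorem~\ref{Bott} is chosen consistently, define
\[
\Psi:C\to \mathbb{D},\qquad \Psi(\lambda)=\phi_\lambda\bigl(f^n_\lambda(v_\lambda)\bigr).
\]
This is holomorphic because $\phi_\lambda$ extends injectively to the component of $f^{-n}(B_0)$ containing $f^n_\lambda(v_\lambda)$ (Lemma~\ref{allin}) and varies holomorphically in $\lambda$. To show that $\Psi$ is a bijection onto $\mathbb{D}$, the idea is to apply quasiconformal surgery in the style of Faught--Roesch \cite{FG}. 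Given $\lambda_*\in C$ with $w_0=\Psi(\lambda_*)$ and any $w_1\in\mathbb{D}$, choose a quasiconformal $\Phi:\mathbb{D}\to \mathbb{D}$ with $\Phi(w_0)=w_1$, equal to the identity near $0$ and near $\partial\mathbb{D}$. Transport $\Phi$ via $\phi_{\lambda_*}^{-1}$ into $B_{0,\lambda_*}$, extend by the identity outside, and pull the resulting Beltrami coefficient back by all iterates of $f_{\lambda_*}$ to obtain an $f_{\lambda_*}$-invariant $\mu$ with $\|\mu\|_\infty<1$. Integrating $\mu$ via the measurable Riemann mapping theorem produces a quasiconformal $h:\mathbb{C}\to\mathbb{C}$ such that $h\circ f_{\lambda_*}\circ h^{-1}$ is meromorphic, singularly finite, and topologically conjugate to $f_{\lambda_*}$; by Theorem~\ref{top fam} and its corollaries this conjugated map equals $f_{\lambda_1}$ for a unique $\lambda_1$, and one checks $\Psi(\lambda_1)=w_1$. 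Injectivity is analogous: two parameters in $C$ with the same $\Psi$-value are conformally conjugate inside $\calf$, forcing them to coincide.

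\textbf{Properness, simple connectivity, and the center.} Next I would verify that $\Psi$ is proper. If $\lambda_k\in C$ converges to $\lambda_\infty\in\partial C$, then $\lambda_\infty\notin \calc_{\mathbf{n}}$, and this must arise either from a Misiurewicz-type event (the orbit of $v_\lambda$ meeting $\partial B_0$ at step $n$) or from a parabolic bifurcation; in either case $f^n_{\lambda_\infty}(v_{\lambda_\infty})\in\partial B_{0,\lambda_\infty}$, so $|\Psi(\lambda_k)|\to 1$. A proper holomorphic bijection onto $\mathbb{D}$ is a conformal isomorphism, so $C$ is simply connected; and because the centers in $C$ are precisely the solutions of $\Psi(\lambda)=0$, there is exactly one.

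\textbf{Main obstacle.} The hard part will be closing the surgery cleanly in the transcendental setting. The construction produces a priori only a meromorphic map topologically conjugate to $f_{\lambda_*}$; the generalized Nevanlinna rigidity of Theorem~\ref{top fam} is what forces it back into $\calf$, and the normalization of the integrating homeomorphism has to be chosen compatibly with the preserved singular-value and polynomial data. A secondary subtlety is the properness statement: one must rule out the possibility that a sequence in $C$ escapes via a virtual-cycle or pole mechanism without $|\Psi|$ tending to $1$, which is handled by combining Theorem~\ref{dichotomy} with the Fatou-component classification of Theorems~\ref{NoWD}, \ref{NoBD}, and \ref{NOHR}.
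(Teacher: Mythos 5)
Your proposal follows the same route as the paper's own proof (which is given only as a sketch): define $\Psi(\lambda)=\phi_\lambda(f^n_\lambda(v_\lambda))$ using the B\"ottcher coordinate on $B_{0,\lambda}$, prove it is proper, and realize each target value in $\mathbb{D}$ by quasiconformal surgery closed through the generalized Nevanlinna rigidity of Theorem~\ref{top fam}. The only expository difference is that you argue injectivity directly and add an explicit openness step via structural stability, whereas the paper establishes $\psi$ as a proper local homeomorphism (obtaining injectivity from covering-space theory since $\mathbb{D}$ is simply connected); these are interchangeable and your version fills in details the paper leaves implicit.
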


\begin{proof}[Sketch]
For any component $C_n^i\in \mathcal{C}_\mathbf{n}$,   define the  map $\psi: C_n^i\to \mathbb{D}$ by 
\[ \psi(\lambda)= \phi_\lambda (f_\lambda^n(v))\] 
 where $\phi_\lambda: B_{0,\la} \rightarrow \DD$ is the B\"ottcher map for $f_{\la}$ defined on its immediate basin of $0$  in theorem~\ref{Bott}.   Since the only singular value in $B_{0, \la}$ is the origin, $\phi_{\la}$ is defined and injective on the whole basin.  
 Note that if $\la^*= \psi^{-1}(0)$, $\la^*$ is a center.   Moreover, 
 as in the proof of Theorem \ref{C0}, the map $\psi$ is a proper map.  

We next show that the map $\psi$ is a local homeomorphism.  To this end   choose 
  $\lambda \neq \la^* \in C_n^i$, and set $\xi=\psi(\lambda)\neq 0$.  
 For any $\xi \in \DD$,  standard surgery techniques  yield a quasiconformal map $\tau_{\xi}$ such that  $F_{\lambda(\xi)}=\tau_{\xi} \circ f_{\la^*} \tau_{\xi}^{-1}$.   Normalizing so that $\tau_{\xi}$ fixes $0$  and  the pole $(\pi/2)^{1/q}$, it follows from the generalized form of Nevanlinna's theorem, \cite{CK1}, that $F_{\la(\xi)}= f_{\la(\xi)} \in \calf$.   Moreover,  by the conjugacy, $ f_{\la(\xi)}^n(v_{\la(\xi)})$ is in the immediate basin of zero, $B_{0, \la(\xi)}$,  and  
 has B\"ottcher coordinate $\xi$.   Thus $\phi_{\la^*}^{-1}\circ  \psi: C_n^i \rightarrow B_{0, \la^*}$ is a homeomorphism, which proves the theorem. 
\end{proof}

We can use the map $\psi$ to define rays for each $C_n \in \calc_n$ as we did for $\calc_0$:  set $\calr_{t}^n= \psi^{-1}(se^{2\pi i t} )$, $s \in [0, 1)$.   As for $\calc_0$, if $t$ is rational, the ray is either periodic or preperiodic.   The proof of theorem~\ref{ratl rays} applies here to prove

\begin{thm}\label{ratl rays n} The rational rays $\calr_\theta \in C_n \subset\calc_n$ land at  parameters for which the asymptotic values    land on repelling periodic cycles and hence are Misiurewicz points. 
\end{thm}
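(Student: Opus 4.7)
The strategy is to adapt the proof of Theorem~\ref{ratl rays} to the non-central setting, using the crucial fact that for $\la$ in a component $C_n \subset \calc_\mathbf{n}$ with $n \geq 1$, the immediate basin $B_{0,\la}$ contains no asymptotic value (by Lemma~\ref{allin}) and is therefore bounded and simply connected. Consequently every dynamic ray in $B_{0,\la}$ lands at a finite point of $\partial B_{0,\la}$, and the ``asymptotic path'' branch of the proof of Theorem~\ref{ratl rays}, which produced parabolic landing parameters, cannot arise here.

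First I would set up the dynamic data along $\calr_\theta^n$. For $\la$ on the ray, the point $f_\la^n(v_\la)$ sits on the dynamic ray $R_{\theta,\la}=\phi_\la^{-1}(\{se^{2\pi i\theta}:s\in[0,1)\})\subset B_{0,\la}$ at radius $|\psi(\la)|$. Since $\theta$ is rational, one can choose minimal integers $j\geq 0$ and $k\geq 1$ so that $\{f_\la^{j+i}(R_{\theta,\la})\}_{i=0}^{k-1}$ is a periodic cycle of rays, each landing at a finite point of $\partial B_{0,\la}$ by local connectivity (Theorem~\ref{J is loc conn}). The landing points form a periodic cycle $\{z_1(\la),\dots,z_k(\la)\}\subset\partial B_{0,\la}$ whose multiplier $m(\la)$ varies holomorphically in $\la$; because $f_\la$ is hyperbolic on $C_n$ and the cycle lies in $J(f_\la)$, one has $|m(\la)|>1$ throughout $C_n$.

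Next I would analyze the accumulation points $\la_\infty$ of $\calr_\theta^n$ as $s\to 1^-$. These are finite by Theorem~\ref{allbdd}. Continuity of the B\"ottcher coordinate and of the orbit of the asymptotic value forces $f_{\la_\infty}^n(v_{\la_\infty})$ to coincide with the landing point of $R_{\theta,\la_\infty}$, which is eventually periodic into the persistent cycle $\{z_1(\la_\infty),\dots,z_k(\la_\infty)\}$; in particular $|m(\la_\infty)|\geq 1$.

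The main step, and the one I expect to be the principal obstacle, is to rule out the parabolic case $m(\la_\infty)=1$. The plan is to invoke the Fatou-theoretic fact that the immediate basin of a parabolic cycle must contain an asymptotic or critical value in its interior. If $m(\la_\infty)=1$, then $v_{\la_\infty}$ itself eventually lands on the parabolic cycle and so lies in the Julia set; and when $p$ is odd, the symmetry of $B_0$ (Lemma~\ref{sym2}) together with the dynamical symmetries of Section~\ref{dyn symmetries} force $v'_{\la_\infty}$ either onto the same parabolic cycle or onto its image under $z\mapsto -z$. Since $0$ is super-attracting and supplies no further free singular value, no singular value would then lie in the interior of the parabolic basin, violating the Fatou classification. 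Hence $|m(\la_\infty)|>1$ and $\la_\infty$ is a Misiurewicz parameter. Uniqueness of the accumulation point, and therefore the assertion that $\calr_\theta^n$ lands, follows because the relation $f_\la^{n+j+k}(v_\la)=f_\la^{n+j}(v_\la)$ encoding the prescribed pre-periodic portrait is an analytic equation in $\la$ with isolated solutions.
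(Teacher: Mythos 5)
Your proposal is correct and follows essentially the same approach as the paper: both reduce to the observation that for $\la\in\calc_n$ with $n\geq 1$ the asymptotic value is not in $B_{0,\la}$, which forces the dynamic ray through $v_\la$ to be strictly preperiodic (so that the ``asymptotic path'' / periodic-ray mechanism that produced parabolic landing parameters in $\calc_0$ cannot occur), and then conclude that the limit cycle is repelling because a parabolic cycle would require a singular value in its immediate basin while $0$, $v$, and $v'$ are all unavailable. The only difference is expository: the paper phrases the conclusion by referring the reader back to the $l>0$ case of the proof of Theorem~\ref{ratl rays}, whereas you spell out the underlying Fatou-classification argument directly.
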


\begin{proof}  The proof that the rational rays land is very similar to the proof of theorem~\ref{ratl rays} and we leave it to the reader to fill in the details.   To prove that the limit cycles are repelling and not parabolic, recall that we proved that there could only be a parabolic cycle in the limit if the ray containing the asymptotic value belongs to a periodic, and not a preperiodic ray for $s<1$.   Since $\la(s) \in C_n$, the asymptotic value $v_{\la(s)}$  is not in  $B_{0, \la(s)}$, but in $B_{n,\la(s)}=f^{-n}_{\la(s)}(B_{0,\la(s)})$ for some branch of the inverse.  This implies that the dynamic rays containing $v_{\la}(s)$ cannot be periodic.   Thus in the argument of theorem~\ref{ratl rays}, we are in the $l>0$ case; the limit cycle is preperiodic and not periodic.  It therefore cannot be parabolic proving the theorem.

  \end{proof}

 An immediate corollary is
\begin{cor} The landing points of the rational rays of the non-central capture components are not   accessible boundary points of   a shell component. 
\end{cor}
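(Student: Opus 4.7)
The strategy is a proof by contradiction using the structure of accessible boundary points of shell components. Suppose $\lambda_\infty$ is a landing point of a rational ray of some non-central capture component; by Theorem~\ref{ratl rays n}, $\lambda_\infty$ is a Misiurewicz parameter, so $v_{\lambda_\infty}$ eventually lands on a repelling periodic cycle of $f_{\lambda_\infty}$. Since the Julia set is completely invariant under both $f$ and $f^{-1}$, this places $v_{\lambda_\infty}\in J(f_{\lambda_\infty})$, and by the symmetries of section~\ref{dyn symmetries} the same holds for $v'_{\lambda_\infty}$. Suppose further, for contradiction, that $\lambda_\infty$ is accessible from a shell component $\Omega_m$ via a curve $\gamma:[0,1)\to \Omega_m$ with $\gamma(t)\to \lambda_\infty$.

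Along $\gamma$, $f_{\gamma(t)}$ carries an attracting $m$-cycle $\{z_j(t)\}_{j=0}^{m-1}$ whose multiplier $\rho(\gamma(t))\in \DD^*$. I would split into two cases depending on whether this cycle remains bounded. If it does, then after passing to a subsequence I may assume $z_j(t)\to z_j^\ast$ and $\rho(\gamma(t))\to \rho^\ast$, giving a periodic cycle of $f_{\lambda_\infty}$ of period dividing $m$ with multiplier $\rho^\ast$ of modulus at most $1$. Strict inequality $|\rho^\ast|<1$ would force the cycle to persist as an attracting cycle in a neighborhood, placing $\lambda_\infty$ in $\Omega_m$ and contradicting $\lambda_\infty\in \partial \Omega_m$. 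Hence $|\rho^\ast|=1$, so $\{z_j^\ast\}$ is a non-repelling cycle of $f_{\lambda_\infty}$.

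The crux is then to rule out the existence of such a non-repelling cycle at $\lambda_\infty$. A non-repelling cycle of a singularly finite meromorphic map must have at least one singular value either in the basin of the cycle (attracting or parabolic) or accumulating on the cycle itself (Siegel/Cremer). The singular set of $f_{\lambda_\infty}$ is $\{0, v_{\lambda_\infty}, v'_{\lambda_\infty}\}$, together with $\infty$ when $p\ge 2$. But $0$ is the superattracting fixed point, trapped in its own basin $B_0$; $\infty$ is an essential singularity and never lies in the Fatou set; and by the previous paragraph $v_{\lambda_\infty}, v'_{\lambda_\infty}$ both lie in $J(f_{\lambda_\infty})$. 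No singular value is available to feed the non-repelling cycle $\{z_j^\ast\}$, the required contradiction.

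It remains to treat the case where the cycle $\{z_j(t)\}$ leaves every compact set as $t\to 1$. Then $\lambda_\infty$ is a virtual cycle parameter of order $m$, and by Theorem~\ref{1FK}(3) together with Theorem~\ref{allvirtcents} one has $f_{\lambda_\infty}^{m-1}(v_{\lambda_\infty})=\infty$. In particular $v_{\lambda_\infty}$ is a prepole and cannot land on a finite repelling cycle, contradicting the Misiurewicz condition. Both cases being impossible, $\lambda_\infty$ is not an accessible boundary point of any $\Omega_m$. The most delicate step is the first case: extracting a convergent subsequence of cycles and verifying that the universal covering structure of $\rho:\Omega_m\to\DD^\ast$ forces $|\rho^\ast|=1$, so that the non-repelling cycle is genuinely produced and the singular value count gives a contradiction.
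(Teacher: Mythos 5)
Your proposal is correct in outline and reaches the same underlying incompatibility as the paper --- a Misiurewicz parameter cannot sit at a boundary point of a shell component --- but you get there by a different, more self-contained route. The paper simply quotes the description of $\partial\Omega_m$ from \cite{FK} and Theorem~\ref{1FK}: every boundary point of a shell component except the virtual center carries a neutral cycle, the virtual center carries a virtual cycle (the asymptotic value is a prepole), and neither is compatible with the Misiurewicz condition supplied by Theorem~\ref{ratl rays n}. You instead reconstruct the neutral cycle by hand, following the attracting $m$-cycle along the access curve, extracting a limit, and ruling out $|\rho^\ast|<1$; and you then rule out the limiting non-repelling cycle by a singular-value count. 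What your approach buys is independence from the fine boundary parameterization in \cite{FK} (the paper's proof, as written, only addresses the ``rational'' boundary points where the cycle is parabolic, whereas your argument handles irrationally indifferent limits on the same footing); what it costs is that you must justify the subsequence extraction and the dichotomy bounded/unbounded, which the cited results already package.

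One step needs tightening. To exclude a Siegel or Cremer cycle at $\lambda_\infty$ you argue that ``no singular value is available'' because $v_{\lambda_\infty},v'_{\lambda_\infty}\in J(f_{\lambda_\infty})$. Membership in the Julia set does not by itself prevent the orbit of an asymptotic value from accumulating on a Cremer point or on the boundary of a Siegel disk --- the relevant obstruction is that such a cycle must lie in the derived set of $PS(f)$. The correct reason it fails here is that for a Misiurewicz parameter the post-singular set is \emph{finite} ($0$ is fixed and $v$ has a finite forward orbit terminating in a repelling cycle), so its derived set is empty and cannot contain a Siegel boundary or Cremer point; the same finiteness also kills the attracting and parabolic cases at once, and in fact disposes of $|\rho^\ast|<1$ without appealing to $\lambda_\infty\in\partial\Omega_m$. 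With that substitution the argument is sound.
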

\begin{proof}  In \cite{FK} it is proved that  at all boundary points but one of a shell component $\Omega$,  $f$  has a neutral cycle; the exception is the virtual center where  it has a  virtual cycle.   Moreover, the boundary is piecewise analytic;  it is parameterized by the real line and has parabolic cycles at the rational numbers;  it  has cusps  at the integers.  If $\la_t$, $t$ rational,  were the landing point of a curve in   $\Omega$, $f_{\la_t}$ would have a parabolic cycle;  if it were also  a landing point of a rational ray $r_{t'}$ in a component of $\calc_n$, $f_{\la_t}$ would be a Misiurewicz point.  Thus, this cannot happen. 
\end{proof}

\section{Bifurcation locus}

Define the bifurcation locus as the complement of the hyperbolic components: 
$$\mathcal{B}=\mathbb{C}^*\setminus (\mathcal{S} \cup \mathcal{C})$$

\begin{thm}\label{bifloc1} Both the centers of the capture components and the Misiurewicz
points cluster on boundary of the capture components. \end{thm}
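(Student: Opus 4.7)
The plan is to apply Montel's theorem to the family of meromorphic functions $G_n(\la)=f^n_\la(v_\la)$ on a neighborhood of each boundary point of a capture component. There are three steps: show $\{G_n\}$ fails to be normal at any $\la^*\in\partial C$, use Montel with three preimages of $0$ to produce nearby centers, and use Montel with three repelling-periodic continuations to produce nearby Misiurewicz points.

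First I would establish non-normality. Let $U$ be any connected neighborhood of $\la^*\in\partial C$. For $\la\in C$, $v_\la$ is attracted to $0$, so $G_n\to 0$ uniformly on compacta of $U\cap C$. If $\{G_n\}$ were normal on $U$, any subsequential limit would be meromorphic on $U$ and vanish on the open set $U\cap C$, hence be identically zero by the identity theorem. Then $G_n\to 0$ throughout $U$, forcing every $\la\in U$ to lie in $\calc$; since $U$ is connected and meets the open capture component $C$, this gives $U\subseteq C$, contradicting $\la^*\in\partial C$. (This is exactly where openness and the component structure of capture components, Theorems~\ref{C0} and \ref{Cn}, are essential.)

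To produce centers I would assume for contradiction that some $U$ contains none, so $G_n(\la)\neq 0$ for every $n\geq 1$ and $\la\in U$. The crucial observation is that $f_\la^{-1}(0)=\{(k\pi)^{1/q}\omega_j\}$ is \emph{independent of $\la$}, since $\la\tan^p z^q=0$ iff $z^q\in\pi\ZZ$. Because $G_{n+1}=f_\la\circ G_n$ never vanishes, each $G_n$ omits the entire infinite $\la$-independent set $f_\la^{-1}(0)$; picking any three of its points, the classical Montel theorem makes $\{G_n\}$ normal on $U$, contradicting the non-normality step. Hence some $\la\in U$ satisfies $f^m_\la(v_\la)=0$, and taking $m$ minimal exhibits $\la$ as the center of a component of $\calc_m$.

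For Misiurewicz density I would pick three distinct repelling periodic points $z_1,z_2,z_3$ of $f_{\la^*}$ (they exist by density of repelling periodic points in $J(f_{\la^*})$), and continue them by the implicit function theorem to three distinct, still repelling-periodic, holomorphic sections $\alpha_i(\la)$ on a sufficiently small neighborhood $U$. If no $\la\in U$ were Misiurewicz, then $G_n(\la)\neq\alpha_i(\la)$ for every $n,i,\la$, since otherwise $v_\la$ would eventually land on the repelling cycle of $\alpha_i(\la)$. Post-composing with the $\la$-dependent M\"obius map sending $(\alpha_1(\la),\alpha_2(\la),\alpha_3(\la))$ to $(0,1,\infty)$ converts $\{G_n\}$ into a family of meromorphic functions omitting $\{0,1,\infty\}$, which is normal by Montel; hence so is $\{G_n\}$, again contradicting non-normality. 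The hard part of the whole argument is the non-normality step: without exploiting that the capture components are the full open connected components of $\{\la:v_\la\text{ attracted to }0\}$, a normal subsequence of $\{G_n\}$ would only be forced to vanish on $U\cap C$ and could a priori be non-zero elsewhere, blocking the contradiction.
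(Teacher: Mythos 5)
Your argument is correct and, for two of its three steps, is essentially the paper's: the paper also works with the family $h_n(\la)=f^n_\la(v_\la)$, derives non-normality from the fact that $h_n\to 0$ on the capture side of the boundary but not at boundary points themselves, and obtains centers by observing that otherwise $h_n$ omits the $\la$-independent preimages of $0$ (the points with $z^q\in\pi\ZZ$), so Montel applies. Where you genuinely diverge is the Misiurewicz step. The paper again uses explicit $\la$-independent omitted values, namely the points $(k\pi+\pi/4)^{1/q}$, exploiting the special identity $f_\la\bigl((k\pi+\pi/4)^{1/q}\bigr)=\la$ so that hitting one of them makes the asymptotic value eventually periodic; you instead follow three repelling periodic points holomorphically via the implicit function theorem and apply the M\"obius-renormalized form of Montel's theorem for families omitting three moving targets. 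Your version is the standard Mandelbrot-set argument and has the advantage that it lands the orbit of $v_\la$ directly on a \emph{repelling} cycle, which is what Definition~\ref{Misz} actually requires (the paper's computation only shows eventual periodicity and leaves the repelling nature of the resulting cycle implicit); the paper's version buys explicitness and avoids any appeal to holomorphic motion of periodic points. One small caveat common to both arguments, but slightly more exposed in yours: your concluding step ``$U\subseteq\calc$, $U$ connected, $U\cap C\neq\emptyset$ implies $U\subseteq C$'' tacitly assumes each capture component is a full connected component of $\calc=\bigcup_n\calc_n$ (not merely of one $\calc_n$), i.e.\ that $\partial C\cap\calc=\emptyset$; the paper sidesteps this by phrasing its proof in terms of $\partial\calc$ rather than $\partial C$, at $\la^*\in\partial\calc$ one has $h_n(\la^*)\not\to 0$ directly, which is all the contradiction needs.
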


\begin{proof} If the set, $\calz$, of centers of capture components were not dense in the boundary of the capture components, $\partial\calc$, there would exist an open set $U$ which meets $\partial\calc$ but contains no solution of $f^n_\lambda(v)=0$ for any $n\geq 1$. Consider the family of maps   $h_n(\lambda)=f_\lambda^n(v)$ defined on $U$,
for all $n\geq 1$.  
Then $h_{n-1}(\la)$  omits the values $k\pi$, for every $k\in \mathbb{Z}$ because  if $f_{\la}^{n-1}(v)=  k \pi$, then $f_{\la}^n(v)=0$ and $\la \not\in U$.
 
Therefore $h_n(\lambda)$ is a normal family for it misses more than $3$ values.  Since, however, $U$ intersects $\partial \calc$, it contains an open set $V$ in  a capture component so that  a limit function of the $h_n$ is the constant function $0$ on $V$ and hence $U$.  It also contains   points $\lambda$ that are not in any capture component so that $h_n(\la) \not\to 0$; thus  the family is not normal.  This contradiction indicates that the centers cluster on the boundary of the capture components.

If the set of Misiurewicz points is not dense on the $\partial \calc$, there exists an open set $U$ which meets the boundary and contains no Misiurewicz points.  Again consider the family  
$h_n(\lambda)=f_\lambda^n(v)$ for all $n\geq 1$. 
 Now suppose that for some $n\geq 1$ and some $k$,  $h_n(\lambda)=(k\pi+\pi/4)^{1/q}$.  Then $$h_{n+1}(\lambda)=f_\lambda(h_n(\lambda))=f_\lambda((k\pi+\pi/4)^{1/q})=\lambda;$$
that is, $\lambda$ is a periodic point   in the forward orbit of $v$, so it    is a Misiurewicz point in $U$.
 Therefore the family $h_n$ omits the values $(k\pi+\pi/4)^{1/q}$ for all $k\in \mathbb{Z}$ and  $h_n(\lambda)$ is a normal family.  As above,  we have a contradiction since $U$ intersects $\partial \calc$, so the family cannot   be normal.  This implies that the Misiurewicz points are dense in $\partial \calc$ as claimed. 
\end{proof}

\begin{thm}\label{bifloc2} 
The centers and virtual centers accumulate on $\mathcal{B}$.  
\end{thm}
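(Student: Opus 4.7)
The strategy adapts the Montel argument of Theorem~\ref{bifloc1} so as to detect both capture centers and virtual cycle parameters simultaneously. Let $U$ be an open set meeting $\mathcal{B}$, and suppose for contradiction that $U$ contains neither a center nor a virtual center. Then on $U$ the family $h_n(\lambda)=f_\lambda^n(v_\lambda)$ omits the value $0$ (no centers) and the value $\infty$ (no virtual cycle parameters, since by Theorem~\ref{1FK} a virtual cycle parameter of order $k$ satisfies $h_{k-1}(\lambda)=\infty$). Because $h_{n+1}=f_\lambda\circ h_n$ can never take the value $\infty$ on $U$, each $h_n$ also avoids every pole $(k\pi+\pi/2)^{1/q}\omega_j$ of $f_\lambda$; omitting infinitely many fixed values, $\{h_n\}$ is normal on $U$ by Montel's theorem.

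Now fix $\lambda_0\in U\cap\mathcal{B}$. If $\lambda_0\in\partial\mathcal{C}$, the argument of Theorem~\ref{bifloc1} already produces a center of a capture component inside $U$, contradicting our assumption. Otherwise $\lambda_0\in\partial\mathcal{S}$, and some shell component $\Omega_n$ satisfies $U\cap\Omega_n\neq\emptyset$ with $\lambda_0\in\partial\Omega_n$. On $U\cap\Omega_n$ the attracting period-$n$ cycle $z_0(\lambda),\ldots,z_{n-1}(\lambda)$ depends holomorphically on $\lambda$ and attracts the orbit of $v_\lambda$, so $h_{kn+j}(\lambda)\to z_j(\lambda)$ as $k\to\infty$. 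Every subsequential limit of the normal family $\{h_{kn+j}\}_k$ is holomorphic on $U$ and agrees with $z_j$ on $U\cap\Omega_n$, hence by the identity theorem the whole sequence $\{h_{kn+j}\}_k$ converges on the connected component $V$ of $U$ containing $\lambda_0$ to a holomorphic extension $\tilde z_j$ of $z_j$. Passing to the limit in $h_{kn+j+1}=f_\lambda\circ h_{kn+j}$ gives $\tilde z_{j+1}=f_\lambda\circ \tilde z_j$ on $V$, so $\tilde z_0,\ldots,\tilde z_{n-1}$ is an honest period-$n$ cycle of $f_\lambda$ for every $\lambda\in V$.

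The heart of the proof is then a multiplier argument. The map $m(\lambda)=(f_\lambda^n)'(\tilde z_0(\lambda))$ is holomorphic on $V$, with $|m|<1$ on $V\cap\Omega_n$ and $|m(\lambda_0)|=1$ at the interior point $\lambda_0\in V$. The maximum modulus principle prevents $|m|\leq 1$ throughout $V$ (otherwise $m$ would be a unimodular constant, contradicting $|m|<1$ on the nonempty open set $V\cap\Omega_n$). So there exists $\lambda_1\in V$ with $|m(\lambda_1)|>1$, at which $\tilde z_j(\lambda_1)$ is a repelling cycle of $f_{\lambda_1}$. Yet normality still gives $h_{kn+j}(\lambda_1)\to\tilde z_j(\lambda_1)$, and convergence of an orbit to a repelling cycle is only possible if that orbit lands exactly on the cycle after finitely many iterations. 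That condition defines a countable union of proper analytic subsets of $V$, which cannot cover an open neighborhood of $\lambda_1$. The contradiction closes the proof.

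I expect the main obstacle to be the single-valuedness of the continuation $\tilde z_j$ through $\lambda_0$: at a cusp of $\partial\Omega_n$ the branches of the cycle collide in a saddle-node and acquire a square-root singularity. Cusps are, however, isolated on $\partial\Omega_n$, and by Theorem~\ref{1FK}(6) together with Theorem~\ref{virtcentsconv} each cusp is itself accumulated by virtual centers of higher-period bifurcating shells; hence we may always shrink $U$ so that $\lambda_0$ is not a cusp, making the analytic continuation unambiguous and the multiplier argument go through.
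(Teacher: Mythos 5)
Your argument is a genuinely different route from the paper's. The paper's proof of Theorem~\ref{bifloc2} is geometric and combinatorial: it cites the results of \cite{CK1} that all hyperbolic components other than the $2q$ unbounded period-one shells lie in ``rectangular-like'' regions between curves asymptotic to the Julia directions, whose diameters tend to zero (by an estimate analogous to that in Lemma~\ref{key2}); it then observes that any accumulation point of a sequence of centers of distinct capture components cannot lie interior to any single hyperbolic component and so must lie in $\calb$, while virtual centers are already boundary points of shell components by Theorem~\ref{1FK}. In other words, the paper's argument shows that the accumulation points of the centers (together with the virtual centers) are \emph{contained in} $\calb$. Your argument, by contrast, runs in the direction of the \emph{density} of centers and virtual centers inside $\calb$: assume an open $U$ meets $\calb$ but avoids them, deduce normality of $\{h_n\}$ via Montel, and derive a contradiction. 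If ``accumulate on $\calb$'' is read as density (and, given that Theorem~\ref{bifloc1} uses ``cluster on'' with a density-style Montel proof, this is a plausible reading), your argument is the more refined one; but you should be aware that the paper's written proof is actually establishing the reverse containment.

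There is also a genuine gap in your case analysis. After establishing normality on $U$, you fix $\lambda_0\in U\cap\calb$ and split into $\lambda_0\in\partial\calc$ versus $\lambda_0\in\partial\cals$. But $\calb=\CC^*\setminus(\cals\cup\calc)$ is merely the complement of the hyperbolic components, and nowhere has it been shown that $\calb=\partial\calc\cup\partial\cals$. A priori $\calb$ can have nonempty interior (parameters with persistent Siegel behaviour, or other non-hyperbolic, non-bifurcating phenomena), and if $\lambda_0$ lies in such interior neither of your cases applies: with $\{h_n\}$ normal on a neighbourhood of $\lambda_0$ lying entirely inside $\calb$, there is no nearby hyperbolic component on which to anchor a limit of a subsequence, so the identity-theorem continuation of the attracting cycle and the subsequent maximum-modulus/Baire argument never start. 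To close this gap you would either need density of hyperbolicity in the $\la$-plane (not claimed in the paper) or a direct argument that normality of $\{h_n\}$ near any $\lambda_0\in\calb$ is already contradictory. The other steps you carry out --- extending the cycle across $\partial\Omega_n$ by normality, forcing $|m|>1$ somewhere via maximum modulus, and ruling out orbits landing exactly on a repelling cycle on an open set --- are correct, and your closing remark about shrinking $U$ away from the isolated cusps to keep the continuation single-valued is the right fix for that issue.
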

\begin{proof}  By theorem~\ref{1FK}, the virtual centers are boundary points of shell components and hence belong to $\calb$.  Moreover, each center is an accumlation point of centers of higher order. 

In \cite{CK1} we proved that the capture components and shell components of period greater than one for $\calf$ are contained between curves that are asymptotic to the Julia directions.  We also proved that there are curves joining the virtual centers at the points $((k+1/2)\pi)^{1/q}$ on the Julia rays to the period one components that separate the non-central capture components centered at the zeros $(k\pi)^{1/q}$ on those rays.   All the other hyperbolic components are contained in these rectangular-like regions.  The argument  and computation that shows the number of such rectangular-like regions whose diameter is bigger than any given $\epsilon$ is finite is similar to the argument and computation in  lemma~\ref{key2}.   Since there are infinitely such components, their diameters go to zero.   In particular,   if  $c_n$, $ n \to \infty$, is a sequence of centers of  distinct capture components   in one of these rectangular-like regions, any accumulation point   $c_n$ cannot be interior to a hyperbolic component  and so must be in $\calb$. 
\end{proof}
Note that it follows immediately that the Misiurewicz points on the boundary of $\calc_0$ are limits of sequences of virtual cycle parameters and centers whose orders tend to infinity.  This is illustrated in figure~\ref{central blowup}.   

In fact, the pictures of the parameter space show that capture components and shell components exhibit an interesting combinatorial structure.  Moreover, they indicate that the only parabolic parameters on $\partial \calc_0$ are cusps.  We plan to explore this further in future work.

\vspace*{20pt}
\noindent Tao Chen, Department of Mathematics, Engineering and Computer Science,
Laguardia Community College, CUNY,
31-10 Thomson Ave. Long Island City, NY 11101.\\
\noindent Email: tchen@lagcc.cuny.edu

\vspace*{5pt}
\noindent Linda Keen, Department of Mathematics, and  CUNY Graduate
School, New York, NY 10016, and Department of Mathematics, Lehman College of CUNY,
Bronx, NY 10468 \\
\noindent Email: LINDA.KEEN@lehman.cuny.edu, linda.keenbrezin@gmail.com

\end{document}